\theoremstyle{definition}
\newtheorem{definition}{Definition}
\newtheorem{remark}{Remark}
\newtheorem{problem}{Problem}
\theoremstyle{plain}
\newtheorem{theorem}{Theorem}
\newtheorem{lemma}{Lemma}
\newtheorem{corollary}{Corollary}
\DeclareMathOperator*{\arginf}{arg\, inf}
\begin{document}

\title{On  Team Decision Problems with Nonclassical Information Structures}

\author{Andreas A. Malikopoulos, {\itshape{Senior Member, IEEE}} 
	\thanks{This research was supported by the Sociotechnical Systems Center (SSC) at the University of Delaware.}%
	\thanks{The author is with the Department of Mechanical Engineering, University of Delaware, Newark, DE 19716 USA (email: \texttt{andreas@udel.edu).}} }


\maketitle



\begin{abstract}
In this paper, we consider  sequential dynamic team decision problems with nonclassical information structures. First, we address the problem from the point of view of a ``manager" who seeks to derive the optimal strategy of the team in a centralized process. We derive structural results that yield an information state for the team which does not depend on the control strategy, and thus it can lead to a dynamic programming decomposition where the optimization problem is over the space of the team's decisions. We, then, derive structural results for each team member that yield an information state which does not depend on their control strategy, and thus it can lead to a dynamic programming decomposition where the optimization problem for each team member is over the space of their decisions. Finally, we show that the solution of each team member is the same as the one derived by the manager. We present an illustrative example of a dynamic team with a delayed sharing information structure.

\end{abstract}

\indent

\begin{IEEEkeywords}
Team theory, decentralized control, non-classical information structures, Markov decision theory.
\end{IEEEkeywords}

\IEEEpeerreviewmaketitle



\section{Introduction} \label{sec:1}

\subsection{Motivation} \label{sec:1a}

\IEEEPARstart{T}{eam} theory \cite{Marshak:1974aa,Radner1962, Marschak_Radner1972} is a mathematical formalism for decentralized stochastic control problems \cite{Sandell:1978aa} in which a ``team," consisting of a number of members, cooperates to achieve a common objective. It was developed to provide a rigorous mathematical framework of cooperating members in which all  members have the same objective yet different information.  
The underlying structure to model a team decision problem consists of \cite{Marschak_Radner1972} (1) a number of $K \in \mathbb{N}$ members of the team; (2) the decisions of each member; (3) the information available to each member, which is different; (4) an objective, which is the same for all members; and (5) the existence, or not, of communication between team members. 
Team theory can be applied effectively in applications that include informationally decentralized systems such as emerging mobility systems  \cite{zhao2019enhanced}, and in particular, optimal coordination of connected and automated vehicles at traffic scenarios \cite{Malikopoulos2017,Cassandras2017, mahbub2020decentralized,Ntousakis:2016aa, Malikopoulos2020, chalaki2020TCST}, networked control systems \cite{Hespanha:2007aa,Zhang:2020aa}, mobility markets \cite{chremos2020MobilityMarket}, smart power grids \cite{Khaitan:2013aa,Howlader:2014aa}, power systems \cite{Du:2017aa}, cooperative cyber-physical networks \cite{Pasqualetti:2015aa,Sami:2016aa,Clark:2017aa}, social media platforms \cite{Dave2020SocialMedia}, cooperation of robots \cite{Saulnier:2017aa,Beaver2020AnFlockingb}, and internet of things \cite{Li:2016aa,Xu:2018aa,Ansere:2020aa}.

\subsection{Related Work} \label{sec:1b}

Team theory  was established with the seminal work of Marschak \cite{Marshak:1974aa}, Radner \cite{Radner1962}, and Marschak and Radner \cite{Marschak_Radner1972} on \textit{static team} problems,  and with Witsenhausen \cite{Witsenhausen1971,Witsenhausen1973} on \textit{dynamic team} problems. In static team problems \cite{Krainak:1982aa,Krainak:1982ab}, the information received by the team members is not affected by the decisions of other team members \cite{Yuksel2013}, while, in dynamic team problems, the information of at least one team member is affected by the decisions of other members  in the team \cite{Yuksel2013}. If there is a prescribed order in which team members make decisions, then such a problem is called a \textit{sequential} team problem. If, however, the team members make decisions in an order that depends on the realization of the team's uncertainty and decisions of other members, then such a problem is called a \textit{non-sequential} team problem. Formulating a well-posed non-sequential team problem is more challenging as we need to ensure that the problem is causal and deadlock free \cite{Andersland:1991aa, Andersland1992,Andersland1994}. 
Teneketzis \cite{Teneketzis1996} presented several results and open questions for non-sequential teams by using the framework of Witsenhausen's intrinsic model \cite{Witsenhausen:1975aa}. In this paper, we restrict our attention to sequential dynamic team decision problems.

The \textit{information structure}  in a sequential team decision problem designates who knows what about the status of the team and when \cite{Schuppen2015,Yuksel2021}. The information structure may designate the complexity \cite{witsenhausen1968counterexample, papadimitriou1982complexity,tsitsiklis1985complexity} of the problem, and can lead to computational implications 
\cite{Papadimitriou:1985aa}. Witsenhausen \cite{Witsenhausen1971a} discussed several information structures and asserted some optimality results for team decision problems.  Ho \cite{Ho:1980aa} investigated information structures within the context of team decision theory using a simple thematic example of a team consisting of two individuals who need to coordinate a meeting. More recently, Mahajan et al. \cite{Mahajan2012} provided a tutorial paper with a comprehensive characterization of information structures. 

Information structures are classified \cite{Schuppen2015} as (1) \textit{classical}, (2) \textit{partially nested} (alternatively also called \textit{overlapping}, or \textit{quasiclassical}), and (3) \textit{nonclassical}. In classical information structures, all team members receive the same information  and have perfect recall \cite{Malikopoulos2016c,Malikopoulos2015,Malikopoulos,Malikopoulos2015b}. If there is only one team member, then such information structures are called \textit{strictly classical} resulting in team decision problems that are typical centralized stochastic control problems \cite{Kumar1986, Kushner1971}. In partially nested information structures, there are some team members who have a nonempty intersection of their information structures while they have perfect recall. Any information structure that is not classical, or partially nested, is called nonclassical and can be further classified \cite{Mahajan2012} as (1) $n$\textit{-step delayed-sharing}, $n\in\mathbb{R}$, where each team member has access to the information, i.e., observations and decisions, of other members after an $n$-step symmetric delay, i.e., same  for all members \cite{Nayyar2011}, or asymmetric delay \cite{Dave2020}; (2) \textit{periodic sharing} \cite{Ooi:1997aa}, where each team member has access to the information, i.e., observations and decisions,  of the other members periodically; (3) \textit{delayed observation (or state} \cite{aicardi1987decentralized}\textit{) sharing information}, where each team member has access to the observations (or states if completely observable) of other members after an $n$-step symmetric, or asymmetric, delay; (4) \textit{delayed control sharing information}, where each team member has access to the decisions of other members after an $n$-step symmetric, or asymmetric, delay \cite{Bismut:1973aa}; and (5) \textit{no sharing information}, where the team members do not share any information. 

Sequential dynamic team problems with nonclassical information structures impose the following technical challenges \cite{Papadimitriou:1987:CMD:2875343.2875347}:
(1) the functional optimization problem of selecting the optimal strategy is not trivial as the class of strategies is infinitely large, and (2) the data increase with time causing significant implications on storage requirements and real-time implementation. In centralized stochastic control theory, these difficulties are addressed  by finding sufficient statistics to compress the growing data without loss of optimality \cite{Striebel1965} using a conditional probability of the state of the team at time $t$ given all the data available up until time $t$. This conditional probability is called \textit{information state}, and it takes values in a time-invariant space. Using this information state can help us derive results for optimal control strategies in a time-invariant domain. Results based on data which, even though they increase with time, are compressed to a sufficient statistic taking values in a time-invariant space are called \textit{structural results} (see \cite{Krishnamurthy2016}, p. 203). 

In centralized stochastic control, structural results can help us establish an information state, which does not depend on the control strategy, and thus they are related to the concept of separation between estimation and control. An important consequence of this separation is that for any given choice of control strategies and a realization of the team's variables until time $t$, the information states at future times do not depend on the choice of the control strategy at time $t$ but only on the realization of the decision at time $t$ (see \cite{Kumar1986}, p. 84). Thus, the future information states are separated from the choice of the current control strategy. The latter is necessary in order to formulate a classical dynamic program \cite{Bertsekas2017,Hordjik1974,Howard}, where at each step the optimization problem is to find the optimal decision for a given realization of the information state \cite{Kumar1986}. 

Several structural results have been reported in the literature to date for team decision problems with nonclassical information structures  \cite{Witsenhausen1971a,Varaiya:1978aa, Kurtaran:1979aa, Nayyar2011,wu2014theory,Gupta:2015aa, Dave2020, Dave2019a,Dave2020a}. However, these results can lead to a sequential decomposition of the optimization problem over a space of functions \cite{Nayyar2011, Nayyar:2019aa,Nayyar2013b} instead of a space of decisions to derive optimal strategies. This is due to the absence of separation between estimation and control which prevents the formulation of a classical dynamic program. There are three general approaches currently in the literature that, in conjunction with these structural results, can be used to derive optimal strategies in sequential dynamic team problems with nonclassical information structures: (1) the \textit{person-by-person} approach, (2) the \textit{designer's} approach, and (3) the \textit{common information} approach. 

The person-by-person approach aims to convert the problem into a centralized stochastic control problem from the point of view of each team member. Namely, we arbitrarily fix the strategies for all team members except for one, say team member $k \in \mathcal{K}$, $\mathcal{K}=\{1,\ldots,K\}$, $K\in\mathbb{N}$. Then, we derive the optimal strategy  for $k$ given the strategies for all other members. We repeat this process for all team members until no  member can improve the performance of the team by unilaterally changing their strategy. Thus, the resulting strategies are person-by-person optimal \cite{Ho:1980aa}. Other research efforts have taken a different approach using Girsanov's change of probability measure to transform the dynamic team problem to a static problem, in which the information structure is not affected by the members' decisions, and then applied the stochastic maximum principle to derive necessary and sufficient optimality conditions for both team and person-by-person strategies \cite{Charalambous:2014aa,Charalambous:2016aa, Charalambous:2015aa, Charalambous:2017aa,Charalambous:2018aa}. An optimal strategy of the team is necessarily person-by-person optimal (see \cite{McGuire1972}, p. 195). However, the converse is not in general true. In addition, if the team payoff function is concave polyhedral, i.e., piecewise linear and concave, then the person-by-person approach is not generally sufficient to determine an optimal strategy (see \cite{McGuire1972}, p. 191), although the problem can be reduced to a linear programming problem. 
Ho \cite{Ho1972a} showed that for a Gaussian team if the observation functions are linear and the cost function is quadratic, then affine control strategies are optimal. The person-by-person approach has  been used in teams with broadcast information structures \cite{Wu2010}, in real-time communication using encoders and decoders \cite{Witsenhausen1979,Varaiya1983,Teneketzis2006,Nayyar2008,Kaspi2010, Yuksel:2013aa}, in quickest detection problems \cite{Teneketzis1984,Veeravalli1993,Tsitsiklis1993,Veeravalli:2001aa}, and networked control systems \cite{Varaiya1983b,Mahajan:2009aa}. 

The designer's approach was first introduced by Witsenhausen \cite{Witsenhausen1973}, as a standard form for sequential stochastic control with a nonclassical information structure, and extended later by Mahajan \cite{mahajan2008sequential} and Yüksel \cite{Yuksel:2020vl}. This approach addresses the team decision problem from the point of view of a ``designer" who knows the team's dynamics and statistics of all sources of uncertainties. Although sequential dynamic team decision problems are informationally decentralized, the designer's approach transforms the problem into a centralized, open-loop planning problem from the designer's point of view where the objective is to derive the strategy of the team before the team starts evolving. Therefore, no data are observed by the designer, and thus this approach leads to a dynamic programming decomposition over a space of functions instead of decisions  imposing significant computational implications  \cite{Papadimitriou:1987:CMD:2875343.2875347}. The person-by-person approach has  been used in conjunction with the designer's approach in real-time communication \cite{Varaiya1983,Mahajan2008,Mahajan:2009ab}, and networked control systems \cite{Varaiya1983b,Mahajan:2009aa}.

The common information approach \cite{Mahajan:2008uq,Nayyar2013b} was first presented for problems with partial history sharing  \cite{Nayyar2011}, where the team members share a subset of their past observations and decisions to a shared memory accessible by all members of the team.  The solution is derived by reformulating the problem from the viewpoint of a ``coordinator" with access only to the shared information (the common information), whose task is to provide prescription strategies to each team member. At each time $t$, the prescription strategies of the team members map their private history of observations and decisions to their optimal decisions at $t$.
The common information approach has been used in problems with control-sharing information structure \cite{Mahajan2013}, in stochastic games with asymmetric information \cite{Nayyar2014}, and in teams with mean-field sharing \cite{Mahajan2015}. There are also some earlier papers that used similar ideas to analyze specific information structures, or structure of the team decision problem, e.g., teams with sequential decompositions \cite{yoshikawa1978decomposition}, teams with partially nested information and common past \cite{casalino1984partially}, teams with delayed state sharing \cite{aicardi1987decentralized}, teams with periodic sharing information structure \cite{Ooi:1997aa}, and teams with belief sharing information structure \cite{yuksel2009stochastic}.

\subsection{Contributions of This Paper} \label{sec:1c}

In this paper, we provide structural results and a classical dynamic programming decomposition of sequential dynamic team decision problems. We first address the problem from the point of view of a ``manager" who seeks to derive the optimal strategy of a team in a centralized process. Then, we address the problem from the point of view of each team member, and show that the solution of each team member is the same as the one derived by the manager. 

The contributions of this paper are the induction of: (1) structural results for the team from the point of view of a manager, i.e., through a centralized process, that yield an information state which does not depend on the control strategy of the team (Theorem \ref{theo:y_t}), and thus it  leads to a classical dynamic programming decomposition where the optimization problem is over the space of the team's decisions (Theorem \ref{theo:dp} and Theorem \ref{theo:dp2}); and (2) structural results for each team member that yield an information state which does not depend on their control strategy (Theorem \ref{theo:y_tk}), and thus it leads to a classical dynamic programming decomposition where the optimization problem is over the space of the decisions of each team member. In addition,  we show that the solution of each team member is the same as the one derived by the manager (Theorem \ref{theo:dp_team}), and therefore, the team members do not need a centralized intervention.

\subsection{Comparison with Related Work} \label{sec:1d}

The one feature which sharply distinguishes previous approaches, reported in Section \ref{sec:1b}, from that undertaken here is that, in this paper, we derive structural results aimed at establishing an information state that does not depend on the control strategy, and thus we can institute separated control strategies that can lead to a classical dynamic programming decomposition. More specifically, the results in this paper advance the state of the art in the following ways. 

First, in contrast to the person-by-person optimal strategy \cite{Witsenhausen1979,Varaiya1983,Varaiya1983b,Teneketzis1984,Veeravalli1993,Tsitsiklis1993,Veeravalli:2001aa, Teneketzis2006,Nayyar2008,Mahajan:2009aa, Kaspi2010,Wu2010, Yuksel:2013aa}, which is not always an optimal strategy of the team (see \cite{McGuire1972}, p. 195), our structural results for each team member (Theorem \ref{theo:y_tk}) guarantee that their optimal control strategies are  also optimal for the team (Theorem \ref{theo:dp_team}). 

Second, while our structural results from the point of view of a manager impose a centralized process, they yield an information state which does not depend on the control strategy of the team (Theorem \ref{theo:y_t}), and thus it can  lead to a classical dynamic programming decomposition where the optimization problem is over the space of the team's decisions (Theorem \ref{theo:dp} and Theorem \ref{theo:dp2}). The designer's approach \cite{Witsenhausen1973, mahajan2008sequential}, on the other hand, transforms the problem into a centralized, open-loop planning problem where the objective is to derive the strategy of the team before the team starts evolving. Therefore, no data are observed by the designer, and thus this approach leads to a dynamic programming decomposition over a space of functions instead of decisions which has significant computational implications  \cite{Papadimitriou:1987:CMD:2875343.2875347}. 

Finally, in contrast to the common information approach \cite{Nayyar2011,Nayyar2013b}, where the coordinator's problem is a centralized stochastic control problem \cite{Nayyar:2014aa} that leads to a dynamic programming decomposition where the optimization problem is over a space of functions, i.e., the prescription functions of the team members, our structural results from the manager's point of view lead to a dynamic programming decomposition where the optimization problem is over the space of the team's decisions (Theorem \ref{theo:dp} and Theorem \ref{theo:dp2}). In addition, our structural results for each team member yield an information state that  leads to a dynamic programming decomposition for each team member resulting  in a solution of each team member which is the same as the one derived by the manager (Theorem \ref{theo:dp_team}), and thus, the team members do not need a centralized intervention.

\subsection{Organization of This Paper} \label{sec:1e}

The remainder of the paper proceeds as follows. In Section II, we provide the modeling framework, information structure, and the optimization problem of a team. In Section III, we derive structural results for the team from the point of view of a manager, and a dynamic programming decomposition where the optimization problem is over the space of the team's decisions. In Section IV, we derive structural results for each team member, and a dynamic programming decomposition where the optimization problem is over the space of the decisions of each team member. In Section V, we present an example of a dynamic team with a delayed sharing information structure consisting of two members. This example was used by Varaiya and Walrand \cite{Varaiya:1978aa} to show that Witsenhausen's structural result asserted in his seminal paper \cite{Witsenhausen1971a} is suboptimal. Finally, we provide concluding remarks and discuss potential directions for future research in Section VI.

\section{Problem Formulation} 
\label{sec:2}

\subsection{Notation}
Subscripts denote time, and superscripts index subsystems. We denote random variables with upper case letters, and their realizations with lower case letters, e.g., for a random variable $X_t$, $x_t$ denotes its realization. The shorthand notation $X_{t}^{1:K}$ denotes the vector of random variables $\big(X_{t}^1, X_{t}^2,\ldots,X_{t}^K\big)$, $x_{t}^{1:K}$ denotes the vector of their realization $\big(x_{t}^1, x_{t}^2,\ldots,x_{t}^K\big)$, and $h^{1:K}_t(\cdot,\cdot)$ denotes the vector of functions $\big(h^1_t(\cdot,\cdot),\ldots, h^K_t(\cdot,\cdot)\big)$. The expectation of a random variable is denoted by $\mathbb{E}[\cdot]$, the probability of an event is denoted by $\mathbb{P}(\cdot)$, and the probability density function is denoted by $p(\cdot)$. 
For a control strategy $\bf{g}$, we use $\mathbb{E}^{\bf{g}}[\cdot]$, $\mathbb{P}^{\bf{g}}(\cdot)$, and $p^{\bf{g}}(\cdot)$ to denote that the expectation, probability, and probability density  function, respectively, depend on the choice of the control strategy $\bf{g}$. For two measurable spaces $(\mathcal{X}, \mathscr{X})$ and $(\mathcal{Y}, \mathscr{Y})$, $\mathscr{X}\otimes\mathscr{Y}$ is the product $\sigma$-algebra on $\mathcal{X}\times \mathcal{Y}$ generated  by the collection of all measurable rectangles, i.e., $\mathscr{X}\otimes\mathscr{Y}\colon= \sigma(\{A\times B: A\in\mathscr{X}, B\in\mathscr{Y} \})$. The product of  $(\mathcal{X}, \mathscr{X})$ and $(\mathcal{Y}, \mathscr{Y})$ is the measurable space $(\mathcal{X}\times \mathcal{Y}, \mathscr{X}\otimes\mathscr{Y})$.

\subsection{Modeling Framework}

We  consider a team of $K \in \mathbb{N}$ members with a measurable state space $(\mathcal{X}_t, \mathscr{X}_t)$, where $\mathcal{X}_t$ is the set in which the team's state takes values at time $t = 0,1,\ldots,T-1$, $T\in\mathbb{N}$, and $\mathscr{X}_t$ is the associated $\sigma$-algebra. The state of the team is  represented by a random variable $X_t: (\Omega, \mathscr{F})\to(\mathcal{X}_t, \mathscr{X}_t),$ defined on the probability space $(\Omega, \mathscr{F}, \mathbb{P})$, where $\Omega$ is the sample space, $\mathscr{F}$ is the associated $\sigma$-algebra, and $\mathbb{P}$ is a probability measure  on $(\Omega, \mathscr{F})$. The decision of each team member $k \in \mathcal{K}$, $\mathcal{K}=\{1,\ldots,K\}$, is represented by a random variable $U_t^k: (\Omega, \mathscr{F})\to(\mathcal{U}_t^k, \mathscr{U}_t^k),$ defined on the probability space $(\Omega, \mathscr{F}, \mathbb{P})$, and takes values in the measurable space $(\mathcal{U}^k_t, \mathscr{U}^k_t)$,  where $\mathcal{U}^k_t$ is  team member $k$'s nonempty feasible set of actions at time $t$ and $\mathscr{U}^k_t$ is the associated $\sigma$-algebra.
Let ${U}_t^{1:K}=(U_t^1,\ldots,U_t^K)$ be the team's decision at time $t$.  Starting at the initial state $X_0$, the evolution of the team is described by the state equation
\begin{align}\label{eq:state}
X_{t+1}=f_t\left(X_t,U_t^{1:K},W_t\right),
\end{align}
where $W_t$ is a random variable defined on the probability space $(\Omega, \mathscr{F}, \mathbb{P})$ that corresponds to the external, uncontrollable disturbance to the team and takes values in a measurable set $(\mathcal{W}, \mathscr{W})$, i.e., $W_t:(\Omega, \mathscr{F})\to(\mathcal{W}, \mathscr{W})$. 
$\{W_t: t=0,\ldots,T-1\}$ is a sequence of independent random variables that are also independent of the initial state $X_0$. 
At time $t = 0,1,\ldots,T-1$, every team member $k\in\mathcal{K}$ makes an observation $Y_t^k$, which takes values in a measurable set $(\mathcal{Y}^k, \mathscr{Y}^k)$, described by the observation equation 
\begin{align}\label{eq:observe}
Y_t^k=h_t^k(X_t,Z_t^k),
\end{align}
where $Z_t^k$ is a random variable defined on the probability space $(\Omega, \mathscr{F}, \mathbb{P})$ that corresponds to the noise of each member's sensor and takes values in a measurable set $(\mathcal{Z}^k, \mathscr{Z}^k)$, i.e., $Z_t^k:(\Omega, \mathscr{F})\to(\mathcal{Z}^k, \mathscr{Z}^k)$.  $\{Z_t^k: t=0,\ldots,T-1; k=1,\ldots,K\}$ is a sequence of independent random variables that are also independent of the initial state $X_0$ and $\{W_t: t=0,\ldots,T-1\}$. 

\subsection{Nonclassical Information Structures}

The team has a nonclassical information structure that can be:
\subsubsection{$n$-step delayed information sharing}

In this case, at time $t$, team member $k\in\mathcal{K}$ observes  $Y_t^k$, and the $n$-step, $n\in\mathbb{R}$, past observations $Y_{0:t-n}^{1:K}$ and decisions $U_{0:t-n}^{1:K}$ of the entire team. Thus, at time $t$, the data available to member $k$ consist of the data $\Delta_t$ available to all team members, i.e.,
\begin{align}\label{eq:delta}
\Delta_t\colon= (Y_{0:t-n}^{1:K}, U_{0:t-n}^{1:K}),
\end{align}
where $Y_{0:t-n}^{1:K}=\{Y_{0:t-n}^{1},\ldots,Y_{0:t-n}^{K}\}$, $U_{0:t-n}^{1:K}=\{U_{0:t-n}^{1},$ $\ldots,U_{0:t-n}^{K}\}$, and  the data $\Lambda_t^k$ known only to member $k\in\mathcal{K},$ i.e.,
\begin{align}\label{eq:lambda}
\Lambda_t^k\colon= (Y_{t-n+1:t}^{k}, U_{t-n+1:t-1}^{k}).
\end{align}
The $n$-step delayed information sharing can  also be asymmetric \cite{Dave2020}, i.e., for each member $k\in\mathcal{K}$, $Y_{t-n_k}^{k}$, $U_{t-n_k}^{k},$ where $n_k\in\mathbb{R}$, is constant but not necessarily the same for each  $k$.

\subsubsection{Periodic information sharing with period $\omega\ge1$}
In this case, for $\alpha= 1,2, \ldots$ and $\alpha\omega< t \le (\alpha +1)\omega$, the pair of
$\Delta_t$ and $\Lambda_t^k$, $k\in\mathcal{K},$ becomes
\begin{align}\label{eq:info1}
	&\Delta_t\colon= (Y_{0:\alpha\omega}^{1:K}, U_{0:\alpha\omega}^{1:K}),\\
	&\Lambda_t^k\colon= (Y_{\alpha\omega+1:(\alpha +1)\omega}^{k}, U_{\alpha\omega+1:(\alpha +1)\omega}^{k}).
\end{align}

\subsubsection{$n$-step delayed observation sharing}
In this case, $\Delta_t$ and $\Lambda_t^k$, $k\in\mathcal{K},$ become
\begin{align}\label{eq:info2}
	&\Delta_t\colon= (Y_{0:t-n}^{1:K}),\\
	&\Lambda_t^k\colon= (Y_{t-n+1:t}^{k}, U_{0:t-1}^{k}).
\end{align}

\subsubsection{$n$-step delayed control sharing}
In this case, $\Delta_t$ and $\Lambda_t^k$, $k\in\mathcal{K},$ become
\begin{align}\label{eq:info3}
	&\Delta_t\colon= (U_{0:t-n}^{1:K}),\\
	&\Lambda_t^k\colon= (Y_{0:t}^{k}, U_{t-n+1:t-1}^{k}).
\end{align}

\subsubsection{No sharing information}
In this case, $\Delta_t$ and $\Lambda_t^k$, $k\in\mathcal{K},$ become
\begin{align}\label{eq:info4}
	&\Delta_t\colon= \emptyset,\\
	&\Lambda_t^k\colon= (Y_{0:t}^{k}, U_{0:t-1}^{k}).
\end{align}

The collection $\{(\Delta_t, \Lambda_t^k);$ $k\in\mathcal{K}; t=0,\ldots,T-1\}$, is the information structure of the team and captures who knows what about the status of the team and when. 

In our exposition, we consider  that the team imposes an $n$-step delayed information sharing, which can be deemed as the general case of a nonclassical information structure.  However, in what follows, the results hold for any special case (2)-(5) above and corresponding $n$. 

\subsection{Optimization Problem}
Let $(\mathcal{D}_t, \mathscr{D}_t)$  and $(\mathcal{L}_t^k, \mathscr{L}_t^k), k\in\mathcal{K},$  be the measurable spaces of all possible realizations of $\Delta_t$ and  $\Lambda_t^k,$ respectively, where $\mathscr{D}_t$ and $\mathscr{L}_t^k$ are the associated $\sigma$-algebras. 
Each team member $k$ makes a decision 
\begin{align}\label{eq:control}
U_{t}^{k}=g_t^k(\Lambda_t^k,\Delta_t),
\end{align}
where $g_t^k$ is a control law of $k\in\mathcal{K}$, which is a measurable function $g_t^k: (\mathcal{L}_t^k\times \mathcal{D}_t,\mathscr{L}_t^k\otimes\mathscr{D}_t)\to (\mathcal{U}^k_t, \mathscr{U}^k_t)$.
The control strategy of team member $k\in\mathcal{K}$ is $\textbf{g}^k=\{g_t^k; ~ t=0,\ldots,T-1\}, \textbf{g}^k\in\mathcal{G}^k,$ where $\mathcal{G}^k$ is the feasible set of the control strategies for $k.$ Thus the set of feasible decentralized control strategies is $\mathcal{G}^{Dec}= \times_{k\in\mathcal{K}}\mathcal{G}^k$, i.e., $\textbf{g}=\{\textbf{g}^1,\ldots, \textbf{g}^K\}\in\mathcal{G}^{Dec}$.
If $\textbf{g}\in\mathcal{G}$ is a centralized control strategy then $\mathcal{G}= : (\mathcal{L}_t^1\times\dots\times\mathcal{L}_t^K\times \mathcal{D}_t, \mathscr{L}_t^1\otimes\dots\otimes\mathscr{L}_t^K\otimes\mathscr{D}_t)$.

\begin{problem} \label{problem1}
The problem is to derive the optimal control strategy $\textbf{g}^*$ of the team that minimizes the expected total cost 
\begin{align}\label{eq:cost}
	J(\textbf{g})=\mathbb{E}^{\textbf{g}}\left[\sum_{t=0}^{T-1} c_t(X_t, U_t^{1:K})+c_T(X_T)\right],
\end{align}
where the expectation is with respect to the joint probability distribution of the random variables $X_t$ and  $U_t^{1:K}$ designated by the choice of $\textbf{g}$, $c_t(X_t, U_t^{1:K}):(\mathcal{X}_t\times \prod_{k\in\mathcal{K}} \mathcal{U}_t^k, \mathscr{X}_t \otimes \mathscr{U}_t^1\otimes\ldots\otimes\mathscr{U}_t^K)\to\mathbb{R}$ 
is the team's  measurable cost function, and $c_T(\hat{X}_T):(\mathcal{X}_T, \mathscr{X}_T) \to\mathbb{R}$ is the measurable cost function at $T$. 

The statistics of the primitive random variables $X_0$,  $\{W_t: t=0,\ldots,T-1\}$, $\{Z_t^k: k\in\mathcal{K};~ t=0,\ldots,T-1\}$, the state equations $\{f_t: t=0,\ldots,T-1\}$, the observation equations $\{h^k_t: k\in\mathcal{K};~ t=0,\ldots,T-1\}$, and the cost functions $\{c_t: t=0,\ldots,T\}$ are all known.
\end{problem}

\section{Structural Results for the Team} \label{sec:3}

We start our exposition by addressing Problem \ref{problem1} from the point of view of a manager who seeks to derive the optimal strategy $\textbf{g}\in\mathcal{G}$ of the team.

\subsection{Information State -- Team}

The first step is to identify an appropriate information state for the team that can be used to formulate a classical dynamic programming decomposition for Problem \ref{problem1}.

\begin{definition} \label{def:infoteam}
	An information state, $\Pi_t$, for the team described by the state equation \eqref{eq:state} (a)  is a function of  $(\Delta_t, \Lambda_t^{1:K})$, and (b) $\Pi_{t+1}$ can be determined from $\Pi_t$, $Y_{t+1}^{1:K}$, and $U_{t}^{1:K}$.
\end{definition}

The notation is simpler if we consider densities for all probability distributions. Let $\textbf{g}\in\mathcal{G}$ be a control strategy and $(\Delta_t, \Lambda_t^{1:K})$ be the information structure of the team. 
To proceed, we first need to prove some essential properties of the conditional probability densities related to the observations of the team members and team's state.

\begin{lemma} \label{lem:y_t}
	For any control strategy $\textbf{g}\in\mathcal{G}$ of the team, 
	\begin{align}\label{eq:y_t}
		p^{\textbf{g}}(Y^{1:K}_{t+1}~|~X_{t+1}, \Delta_{t}, \Lambda^{1:K}_t, U^{1:K}_t)= p(Y^{1:K}_{t+1}~|~X_{t+1}),
	\end{align}
for all $t=0,1,\ldots, T-1.$
\end{lemma}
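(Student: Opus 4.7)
The plan is to exploit the independence structure of the primitive random variables of the model. The observation equation \eqref{eq:observe} tells us that $Y^{1:K}_{t+1} = h^{1:K}_{t+1}(X_{t+1}, Z^{1:K}_{t+1})$, so once $X_{t+1}$ is fixed, the only remaining randomness in $Y^{1:K}_{t+1}$ comes from the sensor noise $Z^{1:K}_{t+1}$. The whole proof therefore reduces to showing that, under any strategy $\textbf{g}\in\mathcal{G}$, the noise $Z^{1:K}_{t+1}$ is independent of the conditioning variables $(\Delta_t,\Lambda^{1:K}_t,U^{1:K}_t,X_{t+1})$.

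My first step would be to write every variable in the conditioning event as a measurable function of the primitive randomness. By unrolling \eqref{eq:state}, \eqref{eq:observe}, and the control law \eqref{eq:control} inductively from time $0$ up to time $t$ under the fixed strategy $\textbf{g}$, each of $X_{0:t+1}$, $Y^{1:K}_{0:t}$, and $U^{1:K}_{0:t}$ can be expressed as a measurable function of $(X_0, W_{0:t}, Z^{1:K}_{0:t})$. In particular, $(\Delta_t,\Lambda^{1:K}_t,U^{1:K}_t,X_{t+1})$ is $\sigma(X_0, W_{0:t}, Z^{1:K}_{0:t})$-measurable.

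Next I would invoke the hypotheses stated after \eqref{eq:state} and \eqref{eq:observe}: $\{Z^k_t\}$ is an independent sequence that is also independent of $X_0$ and of $\{W_t\}$. Consequently $Z^{1:K}_{t+1}$ is independent of the $\sigma$-algebra generated by $(X_0, W_{0:t}, Z^{1:K}_{0:t})$, and hence independent of $(\Delta_t,\Lambda^{1:K}_t,U^{1:K}_t,X_{t+1})$. Thus
\begin{align*}
p^{\textbf{g}}\bigl(z^{1:K}_{t+1}\,\big|\,X_{t+1},\Delta_t,\Lambda^{1:K}_t,U^{1:K}_t\bigr) = p\bigl(z^{1:K}_{t+1}\,\big|\,X_{t+1}\bigr) = p\bigl(z^{1:K}_{t+1}\bigr).
\end{align*}

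The proof is then completed by marginalizing $Z^{1:K}_{t+1}$: conditioning on $(X_{t+1},\Delta_t,\Lambda^{1:K}_t,U^{1:K}_t)$ and writing $Y^{1:K}_{t+1}$ through \eqref{eq:observe} gives
\begin{align*}
p^{\textbf{g}}\bigl(Y^{1:K}_{t+1}\,\big|\,X_{t+1},\Delta_t,\Lambda^{1:K}_t,U^{1:K}_t\bigr)
&= \int p\bigl(Y^{1:K}_{t+1}\,\big|\,X_{t+1},z^{1:K}_{t+1}\bigr)\, p\bigl(z^{1:K}_{t+1}\bigr)\,dz^{1:K}_{t+1} \\
&= p\bigl(Y^{1:K}_{t+1}\,\big|\,X_{t+1}\bigr),
\end{align*}
which is \eqref{eq:y_t}. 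There is no real obstacle here; the only subtle point is to make explicit that the strategy $\textbf{g}$ enters the joint distribution only through the deterministic control laws, so the $\textbf{g}$-superscripted measure still inherits independence of $Z^{1:K}_{t+1}$ from the underlying $\mathbb{P}$, and the result holds uniformly over $\textbf{g}\in\mathcal{G}$.
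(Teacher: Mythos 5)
Your proof is correct and follows essentially the same route as the paper's: both arguments reduce the claim to the independence of the sensor noise $Z^{1:K}_{t+1}$ from the conditioning variables and from the choice of $\textbf{g}$. In fact your version is the more careful one, since the paper simply asserts the first equality $p^{\textbf{g}}(Y^{1:K}_{t+1}\,|\,X_{t+1},\Delta_t,\Lambda^{1:K}_t,U^{1:K}_t)=p^{\textbf{g}}(Y^{1:K}_{t+1}\,|\,X_{t+1})$ as a consequence of the observation equation, whereas you justify it by unrolling the dynamics to show that $(\Delta_t,\Lambda^{1:K}_t,U^{1:K}_t,X_{t+1})$ is measurable with respect to $(X_0,W_{0:t},Z^{1:K}_{0:t})$, of which $Z^{1:K}_{t+1}$ is independent.
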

\begin{proof}
	 The realization of  $Y^{1:K}_{t+1}$ is statistically determined by the conditional distribution of $Y^{1:K}_{t+1}$ given $X_{t+1}$ in \eqref{eq:observe}, hence
	\begin{align}\label{eq:lem1a}
			p^{\textbf{g}}(Y^{1:K}_{t+1}~|~X_{t+1}, \Delta_{t}, \Lambda^{1:K}_t, U^{1:K}_t)= p^{\textbf{g}}(Y^{1:K}_{t+1}~|~X_{t+1}).
	\end{align}		
However,
	\begin{align}\label{eq:lem1b}
			p^{\textbf{g}}(Y^{1:K}_{t+1}~|~X_{t+1}) = p^{\textbf{g}}(Z^{1:K}_{t+1}\in \prod_{k\in\mathcal{K}} B^k~|~X_{t+1}),
	\end{align}	
	where $B^k\in \mathscr{Z}^k$, $k\in\mathcal{K}$. Since, $\{Z_{t}^k:~k=1,\ldots,K;~ t=0,\ldots,T-1\}$ is a sequence of independent random variables that are independent of $X_{t+1}$, 
	\begin{align}\label{eq:lem1c}
				p^{\textbf{g}}(Z^{1:K}_{t+1}\in \prod_{k\in\mathcal{K}} B^k~|~X_{t+1}) = p(Z^{1:K}_{t+1}\in \prod_{k\in\mathcal{K}} B^k).
	\end{align}
 Thus, 
	\begin{align}\label{eq:lem1d}
	p^{\textbf{g}}(Y^{1:K}_{t+1}~|~X_{t+1}) = p(Y^{1:K}_{t+1}~|~X_{t+1}).
	\end{align} 
  The result follows from \eqref{eq:lem1a} and \eqref{eq:lem1d}.
\end{proof}

\begin{lemma} \label{lem:x_t1ut}
	For any control strategy $\textbf{g}\in\mathcal{G}$ of the team, 
	\begin{gather}
		p^{\textbf{g}}(X_{t+1}~|~X_t, \Delta_{t}, \Lambda^{1:K}_t, U^{1:K}_t) = p(X_{t+1}~|~X_t, U^{1:K}_t), \label{eq:x_t1ut}
	\end{gather}
	for all $t=0,1,\ldots, T-1.$
\end{lemma}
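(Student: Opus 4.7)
The plan is to mirror the template used for Lemma \ref{lem:y_t}, but with the state equation \eqref{eq:state} playing the role of the observation equation \eqref{eq:observe}. The starting point is that, by \eqref{eq:state}, $X_{t+1}$ is a deterministic function of $(X_t, U_t^{1:K}, W_t)$. Hence, for any measurable rectangle whose preimage under $f_t(\cdot, \cdot, \cdot)$ is a set $A \in \mathscr{W}$, I would rewrite
\begin{align*}
p^{\textbf{g}}(X_{t+1}~|~X_t, \Delta_{t}, \Lambda^{1:K}_t, U^{1:K}_t) = p^{\textbf{g}}(W_t \in A~|~X_t, \Delta_{t}, \Lambda^{1:K}_t, U^{1:K}_t),
\end{align*}
reducing the claim to a statement about the conditional law of $W_t$.

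The main content is then to show that $W_t$ is independent of $(X_t, \Delta_t, \Lambda_t^{1:K}, U_t^{1:K})$, so that the right-hand side collapses to $p(W_t \in A)$. For this step, I would argue by induction on $t$ using \eqref{eq:state}, \eqref{eq:observe}, and the control laws \eqref{eq:control}, that each of $X_t$, $\Delta_t$, $\Lambda_t^{1:K}$, $U_t^{1:K}$ is a measurable function of the primitive random variables $X_0$, $\{W_s: s \le t-1\}$, and $\{Z_s^k: s\le t,\; k\in\mathcal{K}\}$ together with the (deterministic) strategy $\textbf{g}$. Since $W_t$ is assumed independent of $X_0$, of $\{W_s: s\neq t\}$, and of all $\{Z_s^k\}$, it is therefore independent of any measurable function of these primitives.

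With the independence in hand, the chain of equalities
\begin{align*}
p^{\textbf{g}}(W_t\in A~|~X_t, \Delta_{t}, \Lambda^{1:K}_t, U^{1:K}_t) = p(W_t\in A) = p(X_{t+1}~|~X_t, U^{1:K}_t)
\end{align*}
follows, where the last step re-applies the state equation in the reverse direction. A brief remark is needed to note that the final conditional density does not depend on $\textbf{g}$, since the strategy enters the joint distribution only through the already-conditioned quantities $(X_t, U_t^{1:K})$ and through the distribution of $W_t$, which by assumption is strategy-independent.

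I expect the only nontrivial point to be the bookkeeping in the inductive representation of $(X_t, \Delta_t, \Lambda_t^{1:K}, U_t^{1:K})$ as a measurable function of the primitives up through time $t$; once that is established, the conditional independence of $W_t$ from these variables is immediate from the standing independence assumptions on the disturbance and noise sequences. The remainder of the argument is a direct parallel of Lemma \ref{lem:y_t}.
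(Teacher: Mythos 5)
Your proposal is correct and follows essentially the same route as the paper: both reduce the claim via the state equation \eqref{eq:state} to a statement about the conditional law of $W_t$ and then invoke the independence of the disturbance sequence from the conditioning variables to collapse it to $p(W_t\in A)$. The only difference is that you spell out, via an inductive measurability argument on the primitives, why $W_t$ is independent of $(X_t,\Delta_t,\Lambda_t^{1:K},U_t^{1:K})$, a detail the paper asserts without elaboration; this is a welcome tightening rather than a different approach.
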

\begin{proof}
	The realization of  $X_{t+1}$ is statistically determined by the conditional distribution of $X_{t+1}$ given $X_{t}$ and $U^{1:K}_{t}$, i.e., by $p^{\textbf{g}}(X_{t+1}~|~X_t,  U^{1:K}_t) $. From \eqref{eq:state}, we have
	\begin{gather}
		p^{\textbf{g}}(X_{t+1}~|~X_t,  U^{1:K}_t) = p^{\textbf{g}}(W_{t}\in A~|~X_t,  U^{1:K}_t), \label{eq:lem2a}
	\end{gather}		
	where $A\in\mathscr{W}$. Since, $\{W_{t}: t=0,\ldots,T-1\}$ is a sequence of independent random variables that are independent of $X_{t}$ and $U^{1:K}_{t}$, 
	\begin{align}
		p^{\textbf{g}}(W_{t}\in A~|~X_t,  U^{1:K}_t) = p(W_{t}\in A).\label{eq:lem2b}
	\end{align}

Next,
	\begin{align}
		&p^{\textbf{g}}(X_{t+1}~|~X_t, \Delta_{t}, \Lambda^{1:K}_t, U^{1:K}_t) \nonumber\\
		&= p^{\textbf{g}}(W_{t}\in A~|~X_t, \Delta_{t}, \Lambda^{1:K}_t, U^{1:K}_t) = p(W_{t}\in A). \label{eq:lem2c}
	\end{align}	
 The result follows from \eqref{eq:lem2a}, \eqref{eq:lem2b} and \eqref{eq:lem2c}.	
\end{proof}

\begin{lemma} \label{lem:x_t}
	For any control strategy $\textbf{g}\in\mathcal{G}$ of the team, 
	\begin{align}\label{eq:x_t}
		p^{\textbf{g}}(X_{t}~|~\Delta_{t}, \Lambda^{1:K}_t) = p(X_{t}~|~\Delta_{t}, \Lambda^{1:K}_t),
	\end{align}
	for all $t=0,1,\ldots, T-1.$
\end{lemma}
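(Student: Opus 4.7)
The plan is to prove the statement by induction on $t$, using Bayes' rule together with Lemma \ref{lem:y_t} and Lemma \ref{lem:x_t1ut}, and the key observation that at any time $t$ the control vector $U^{1:K}_t = g_t(\Delta_t,\Lambda^{1:K}_t)$ is a deterministic function of the conditioning variables, so conditioning on it given $(\Delta_t,\Lambda^{1:K}_t)$ adds no new information.

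For the base case $t=0$, the information $(\Delta_0,\Lambda^{1:K}_0)$ reduces to data involving only $X_0$ and the initial observations $Y^{1:K}_0$ (no past controls). The posterior $p^{\textbf{g}}(X_0\mid \Delta_0,\Lambda^{1:K}_0)$ is determined, via Bayes' rule, by the prior on $X_0$ and by $p(Y^{1:K}_0\mid X_0)$, both of which are specified exogenously and are independent of $\textbf{g}$.

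For the inductive step, I would first observe that for the $n$-step delayed information sharing structure (and identically for the other cases enumerated in Section II.C), the pair $(\Delta_{t+1},\Lambda^{1:K}_{t+1})$ contains exactly the same information as $(\Delta_t,\Lambda^{1:K}_t,U^{1:K}_t,Y^{1:K}_{t+1})$. Applying Bayes' rule I would write
\begin{align*}
p^{\textbf{g}}(X_{t+1}\mid \Delta_{t+1},\Lambda^{1:K}_{t+1}) &= \frac{p^{\textbf{g}}(Y^{1:K}_{t+1}\mid X_{t+1},\Delta_t,\Lambda^{1:K}_t,U^{1:K}_t)\, p^{\textbf{g}}(X_{t+1}\mid \Delta_t,\Lambda^{1:K}_t,U^{1:K}_t)}{p^{\textbf{g}}(Y^{1:K}_{t+1}\mid \Delta_t,\Lambda^{1:K}_t,U^{1:K}_t)},
\end{align*}
then replace the first factor in the numerator with $p(Y^{1:K}_{t+1}\mid X_{t+1})$ using Lemma \ref{lem:y_t}. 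I would expand the second factor via total probability over $X_t$,
\begin{align*}
p^{\textbf{g}}(X_{t+1}\mid \Delta_t,\Lambda^{1:K}_t,U^{1:K}_t) = \int p^{\textbf{g}}(X_{t+1}\mid X_t,\Delta_t,\Lambda^{1:K}_t,U^{1:K}_t)\, p^{\textbf{g}}(X_t\mid \Delta_t,\Lambda^{1:K}_t,U^{1:K}_t)\, dX_t,
\end{align*}
and invoke Lemma \ref{lem:x_t1ut} to replace the transition factor with $p(X_{t+1}\mid X_t,U^{1:K}_t)$. For the second factor inside the integral, I would use the fact that $U^{1:K}_t$ is a measurable function of $(\Delta_t,\Lambda^{1:K}_t)$ to conclude $p^{\textbf{g}}(X_t\mid \Delta_t,\Lambda^{1:K}_t,U^{1:K}_t)=p^{\textbf{g}}(X_t\mid \Delta_t,\Lambda^{1:K}_t)$, which by the inductive hypothesis equals $p(X_t\mid \Delta_t,\Lambda^{1:K}_t)$.

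At this point the entire numerator is strategy-independent, and since the denominator is simply the integral of the numerator over $X_{t+1}$, it is strategy-independent as well. This yields $p^{\textbf{g}}(X_{t+1}\mid \Delta_{t+1},\Lambda^{1:K}_{t+1}) = p(X_{t+1}\mid \Delta_{t+1},\Lambda^{1:K}_{t+1})$, closing the induction. I expect the main obstacle to be purely notational: carefully verifying that $(\Delta_{t+1},\Lambda^{1:K}_{t+1})$ is equivalent to $(\Delta_t,\Lambda^{1:K}_t,U^{1:K}_t,Y^{1:K}_{t+1})$ for each specific information-sharing pattern in Section II.C, and being precise about the measurability argument that lets one ``drop'' $U^{1:K}_t$ from the conditioning given $(\Delta_t,\Lambda^{1:K}_t)$.
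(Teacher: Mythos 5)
Your proof is correct, but it takes a genuinely different and more explicit route than the paper. The paper's own proof of this lemma is a two-line direct argument: it rewrites $p^{\textbf{g}}(X_{t}\mid\Delta_{t}, \Lambda^{1:K}_t)$ by unpacking the conditioning data and then asserts that, since the transition kernel $p(X_t\mid X_{t-1},U^{1:K}_{t-1})$ is strategy-independent by Lemma \ref{lem:x_t1ut}, the superscript $\textbf{g}$ can be dropped. That argument is terse and leaves implicit exactly the point you make explicit: the posterior depends not only on the transition kernel but also on the posterior at the previous stage and on how the conditioning data were generated, which is why an induction is really needed. Your induction --- Bayes' rule, Lemma \ref{lem:y_t} for the observation kernel, total probability over $X_t$ with Lemma \ref{lem:x_t1ut} for the transition, and the inductive hypothesis for the remaining factor --- is essentially the machinery the paper defers to Appendix \ref{app:1} for the proof of Theorem \ref{theo:y_t}; you are in effect proving the stronger structural result (including the update map $\theta_t$) and reading off the lemma as a corollary. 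One further difference worth noting: the paper obtains the identity $p^{\textbf{g}}(X_{t}\mid\Delta_{t}, \Lambda^{1:K}_t, U^{1:K}_t)=p(X_{t}\mid\Delta_{t}, \Lambda^{1:K}_t)$ as Remark \ref{cor:lemU}, stated as a consequence of this lemma, whereas you derive it directly from the $\sigma(\Delta_t,\Lambda^{1:K}_t)$-measurability of $U^{1:K}_t=g_t(\Delta_t,\Lambda^{1:K}_t)$; your ordering is cleaner and avoids any appearance of circularity. Your verification that $(\Delta_{t+1},\Lambda^{1:K}_{t+1})$ generates the same information as $(\Delta_t,\Lambda^{1:K}_t,U^{1:K}_t,Y^{1:K}_{t+1})$ is correct for the pooled (manager's) data under the $n$-step delayed sharing pattern, and is indeed the only place where the specific information structure enters. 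The cost of your approach is redundancy with Appendix \ref{app:1}; what it buys is rigor.
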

\begin{proof} 
We have
	\begin{align}
		&p^{\textbf{g}}(X_{t}~|~\Delta_{t}, \Lambda^{1:K}_t)\nonumber\\
		&= p^{\textbf{g}}(X_{t}~|~\Delta_{t}, \Lambda^{1:K}_{t-2}, Y^{1:K}_{t-1}, Y^{1:K}_t, U^{1:K}_{t-2},U^{1:K}_{t-1}). \label{eq:lem4a}
	\end{align}		
	However, the realization of $X_{t}$ is statistically determined by the conditional distribution of  $X_{t}$ given $X_{t-1}$ and $U^{1:K}_{t-1}$, which does not depend on the control strategy $\textbf{g}$ (Lemma \ref{lem:x_t1ut}), so we can drop the superscript in \eqref{eq:lem4a}, and thus \eqref{eq:x_t} follows immediately.	
\end{proof}


\begin{remark}\label{cor:lemU}
	As a consequence of Lemma \ref{lem:x_t}, and since $X_{t}$ does not depend on $U^{1:K}_t$, we have 
		\begin{align}\label{eq:lemU}
		p^{\textbf{g}}(X_{t}~|~\Delta_{t}, \Lambda^{1:K}_t, U^{1:K}_t) = p(X_{t}~|~\Delta_{t}, \Lambda^{1:K}_t).
		\end{align}
\end{remark}
Given that the manager can observe the data $(\Delta_t, \Lambda_t^{1:K})$ of the team, our hypothesis is that we can compress these data to a sufficient statistic of the state of the team. 
This statistic is the probability density function $p(X_{t}~|~\Delta_{t}, \Lambda^{1:K}_{t})$.
The next result proves our hypothesis and shows that such information state does not depend on the team's control strategy.

\begin{theorem}[Information State -- Team] \label{theo:y_t}
	For any control strategy $\textbf{g}\in\mathcal{G}$ of the team, the conditional probability density $p^{\textbf{g}}(X_{t}~|~\Delta_{t}, \Lambda^{1:K}_{t})$ does not depend on the control strategy $\textbf{g}$.
	It is an information state $\Pi_{t}(\Delta_{t}, \Lambda^{1:K}_{t})(X_{t})$, i.e., $\Pi_{t}(\Delta_{t}, \Lambda^{1:K}_{t})(X_{t})=p(X_{t}~|~\Delta_{t}, \Lambda^{1:K}_{t})$ with $\int_{\mathscr{X}_t} \Pi_{t}(\Delta_{t},$ $\Lambda^{1:K}_{t})(X_t) dX_t =1$,	
	 that can be evaluated from  $\Delta_{t}, \Lambda^{1:K}_{t}$. Moreover, there is a function $\theta_t$, which does not depend on the control strategy $\textbf{g}$, such that
	\begin{align}\label{eq:xt1}
		\Pi_{t+1}(\Delta_{t+1}, \Lambda^{1:K}_{t+1})(X_{t+1})& \nonumber\\
		= \theta_t\big[ \Pi_{t}(\Delta_{t}, \Lambda^{1:K}_{t})&(X_{t}), Y^{1:K}_{t+1}, U^{1:K}_t \big],
	\end{align}
	for all $t=0,1,\ldots, T-1.$
\end{theorem}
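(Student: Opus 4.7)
My plan is to split the statement into three claims and dispatch them in turn: (i) strategy-independence of $p^{\mathbf{g}}(X_t\mid \Delta_t,\Lambda^{1:K}_t)$, (ii) the fact that this density is a bona fide information state in the sense of Definition~\ref{def:infoteam}, and (iii) the existence of a strategy-free update map $\theta_t$ satisfying \eqref{eq:xt1}. The first claim is immediate from Lemma~\ref{lem:x_t}: the lemma already shows the conditional density equals its strategy-free counterpart, so I would just restate it and then \emph{define} $\Pi_t(\Delta_t,\Lambda^{1:K}_t)(X_t):=p(X_t\mid \Delta_t,\Lambda^{1:K}_t)$. The normalization $\int_{\mathcal{X}_t}\Pi_t\,dX_t=1$ is automatic since $\Pi_t$ is a conditional density. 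Condition (a) of Definition~\ref{def:infoteam} is then a tautology, and condition (b) will follow from (iii).

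For (iii), the main engine is Bayes' rule combined with the three lemmas. The first bookkeeping step is to observe that under the $n$-step delayed sharing structure \eqref{eq:delta}--\eqref{eq:lambda}, the sigma-algebra generated by $(\Delta_{t+1},\Lambda^{1:K}_{t+1})$ coincides with that generated by $(\Delta_t,\Lambda^{1:K}_t,Y^{1:K}_{t+1},U^{1:K}_t)$, because the ``new'' entries in $\Delta_{t+1}$ (namely $Y^{1:K}_{t+1-n}, U^{1:K}_{t+1-n}$) were already contained in $\Lambda^{1:K}_t$, and the new data visible from going to $t+1$ are precisely the fresh observations $Y^{1:K}_{t+1}$ and the just-taken decisions $U^{1:K}_t$. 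I would then apply Bayes:
\begin{align*}
p^{\mathbf{g}}\!\left(X_{t+1}\mid \Delta_{t+1},\Lambda^{1:K}_{t+1}\right)
=\frac{p^{\mathbf{g}}\!\left(Y^{1:K}_{t+1}\mid X_{t+1},\Delta_t,\Lambda^{1:K}_t,U^{1:K}_t\right)\,p^{\mathbf{g}}\!\left(X_{t+1}\mid \Delta_t,\Lambda^{1:K}_t,U^{1:K}_t\right)}{p^{\mathbf{g}}\!\left(Y^{1:K}_{t+1}\mid \Delta_t,\Lambda^{1:K}_t,U^{1:K}_t\right)}.
\end{align*}
By Lemma~\ref{lem:y_t}, the numerator's first factor reduces to $p(Y^{1:K}_{t+1}\mid X_{t+1})$, which is strategy-free.

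For the second factor in the numerator, I would condition-and-integrate over $X_t$:
\begin{align*}
p^{\mathbf{g}}\!\left(X_{t+1}\mid \Delta_t,\Lambda^{1:K}_t,U^{1:K}_t\right)
=\int_{\mathcal{X}_t} p^{\mathbf{g}}\!\left(X_{t+1}\mid X_t,\Delta_t,\Lambda^{1:K}_t,U^{1:K}_t\right)\,p^{\mathbf{g}}\!\left(X_t\mid \Delta_t,\Lambda^{1:K}_t,U^{1:K}_t\right)\,dX_t.
\end{align*}
Lemma~\ref{lem:x_t1ut} collapses the first factor to $p(X_{t+1}\mid X_t,U^{1:K}_t)$, while Remark~\ref{cor:lemU} replaces the second factor by $\Pi_t(\Delta_t,\Lambda^{1:K}_t)(X_t)$. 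The denominator is merely the integral of the numerator over $X_{t+1}$, so it inherits strategy-freeness. Assembling, I would define
\begin{align*}
\theta_t\!\bigl[\Pi_t(X_t),Y^{1:K}_{t+1},U^{1:K}_t\bigr](X_{t+1})
:=\frac{p(Y^{1:K}_{t+1}\mid X_{t+1})\int p(X_{t+1}\mid X_t,U^{1:K}_t)\,\Pi_t(X_t)\,dX_t}{\int p(Y^{1:K}_{t+1}\mid X'_{t+1})\int p(X'_{t+1}\mid X_t,U^{1:K}_t)\,\Pi_t(X_t)\,dX_t\,dX'_{t+1}},
\end{align*}
which is manifestly independent of $\mathbf{g}$ and yields \eqref{eq:xt1}. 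This simultaneously verifies condition (b) of Definition~\ref{def:infoteam}.

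The only genuine obstacle I anticipate is the sigma-algebra bookkeeping argument that pins down exactly how $(\Delta_{t+1},\Lambda^{1:K}_{t+1})$ relates to $(\Delta_t,\Lambda^{1:K}_t,Y^{1:K}_{t+1},U^{1:K}_t)$ under the delayed sharing structure; everything afterward is Bayes plus a direct application of the three preparatory lemmas. Since the preceding paragraph notes that results under the $n$-step delayed sharing case extend verbatim to the other information structures (2)--(5), one would remark that the same bookkeeping argument goes through unchanged modulo reshuffling which indices of $Y$ and $U$ migrate from private to common memory at each step.
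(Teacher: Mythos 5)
Your proposal is correct and follows essentially the same route as the paper's Appendix A proof: Bayes' rule on $p^{\mathbf{g}}(X_{t+1}\mid\Delta_{t+1},\Lambda^{1:K}_{t+1})$, Lemma \ref{lem:y_t} to strip the strategy dependence from the observation likelihood, Lemma \ref{lem:x_t1ut} together with Remark \ref{cor:lemU} to do the same for the one-step prediction integral, and the bookkeeping observation that the entries newly revealed in $\Delta_{t+1}$ were already contained in $\Lambda^{1:K}_t$. Your explicit normalized-product formula for $\theta_t$ is exactly the composition $\phi_t\circ\psi_t$ the paper constructs, so the argument is complete.
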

\begin{proof}
		 See Appendix \ref{app:1}.
\end{proof}

Note that the information state $\Pi_{t+1}(\Delta_{t+1}, \Lambda^{1:K}_{t+1})(X_{t+1})=p(X_{t+1}~|~\Delta_{t+1}, \Lambda^{1:K}_{t+1})$ of the team is the entire probability density function and not just its value at any particular realization of  $(\Delta_{t+1}, \Lambda_{t+1}^{1:K})$. This is because to compute $\Pi_{t+1}(\Delta_{t+1}, \Lambda^{1:K}_{t+1})(X_{t+1})$ for any particular $X_{t+1}$, we need the probability density functions $p(~\cdot ~|~ \Delta_{t}, \Lambda^{1:K}_t, U^{1:K}_t)$ and $p(~\cdot ~|~ \Delta_{t}, \Lambda^{1:K}_t)$. This implies that the information state takes values in the space of these probability densities on the measurable space $(\mathcal{X}_t, \mathscr{X}_t)$, which is an infinite-dimensional space.

\subsection{Optimal Control Strategy of the Team}

In what follows, to simplify notation, the information state $\Pi_{t}(\Delta_{t}, \Lambda^{1:K}_{t})$ of the team at $t$ is denoted simply by $\Pi_t$. We use its arguments $\Delta_{t}$ and $\Lambda^{1:K}_{t}$ only if it is required by our exposition.

\begin{definition}\label{def:septeam}
	A control strategy $\textbf{g}=\{g_t;~ t=0,\ldots,T-1\}$ is said to be \textit{separated} if $g_t$ depends on $\Delta_{t}$ and $\Lambda^{1:K}_{t}$ only through the information state, i.e., $U^{1:K}_t  = g_t\big(\Pi_{t}(\Delta_{t}, \Lambda^{1:K}_{t})\big)$. Let $\mathcal{G}^s\subseteq\mathcal{G}$ denote the set of all separated control strategies.
\end{definition}

In implementing a separated control strategy, we first need to compute the conditional probability $\Pi_{t}(\Delta_{t}, \Lambda^{1:K}_{t})$, and then choose the control, since the task of estimation and control are separated.
Next, we use the information state to define recursive functions which are analogous to the comparison principle (see \cite{Kumar1986}, p. 74).

\begin{theorem} \label{theo:dp}
	Let $V_t\big(\Pi_{t}(\Delta_{t}, \Lambda^{1:K}_{t})\big)$ be functions defined recursively for all $\textbf{g}\in\mathcal{G}^s$ by
	\begin{align}
			&V_T\big(\Pi_{T}(\Delta_{T}, \Lambda^{1:K}_{T})\big)\coloneqq \mathbb{E}^{\textbf{g}}\Big[c_T(X_T)~|~\Pi_{T}=\pi_T \Big],\label{theo2:1a}
	\end{align}
	\begin{align}			
			&V_t\big(\Pi_{t}(\Delta_{t}, \Lambda^{1:K}_{t})\big)\coloneqq \inf_{u^{1:K}_t\in\prod_{k\in\mathcal{K}} \mathcal{U}_t^k }\mathbb{E}^{\textbf{g}}\Big[c_t(X_t,U^{1:K}_t)\nonumber\\ 
			&+ V_{t+1}\big(\theta_t\big[ \Pi_{t}(\Delta_{t}, \Lambda^{1:K}_{t}), Y^{1:K}_{t+1}, U^{1:K}_t\big]\big)~|~\Pi_{t}=\pi_t, \nonumber\\
			&U^{1:K}_t=u^{1:K}_t  \Big], \label{theo2:1b}
	\end{align}
	where $c_T(X_T)$ is the cost function at $T$, and $\pi_T$, $\pi_t$, $u^{1:K}_t$ are the realizations of $\Pi_{T}$, $\Pi_{t}$, and $U^{1:K}_t$, respectively.
	Then, for any control strategy $\textbf{g}\in\mathcal{G}$,
	\begin{align}			
		V_t\big(\Pi_{t}(\Delta_{t}, \Lambda^{1:K}_{t})\big)\le J_t(\textbf{g})\coloneqq \mathbb{E}^{\textbf{g}}\Big[\sum_{l=t}^{T-1}c_l(X_l,U^{1:K}_l)&\nonumber\\ 
		+ c_T(X_T) ~|~\Delta_{t}, \Lambda^{1:K}_{t} \Big],& \label{theo2:1c}	
	\end{align}	
	where $J_t(\textbf{g})$ is the cost-to-go of the team at time $t$ corresponding to the control strategy $\textbf{g}\in\mathcal{G}.$
\end{theorem}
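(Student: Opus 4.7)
The plan is to prove the comparison inequality $V_t(\Pi_t) \le J_t(\textbf{g})$ by backward induction on $t$, leaning on the information-state property of $\Pi_t$ established in Theorem \ref{theo:y_t}. For the base case $t=T$, both sides reduce to $\int c_T(x_T)\,\pi_T(x_T)\,dx_T$: the right-hand side because Theorem \ref{theo:y_t} identifies $p(X_T \mid \Delta_T, \Lambda^{1:K}_T)$ with $\Pi_T$, and the left-hand side by definition \eqref{theo2:1a}. Hence equality (and therefore the inequality) holds at $T$.

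For the inductive step, I would assume $V_{t+1}(\Pi_{t+1}) \le J_{t+1}(\textbf{g})$ and apply the tower property of conditional expectation twice. First, conditioning on $(\Delta_{t+1},\Lambda^{1:K}_{t+1})$ inside the definition of $J_t(\textbf{g})$ produces $J_t(\textbf{g}) = \mathbb{E}^{\textbf{g}}\bigl[c_t(X_t,U^{1:K}_t) + J_{t+1}(\textbf{g}) \,\big|\, \Delta_t, \Lambda^{1:K}_t\bigr] \ge \mathbb{E}^{\textbf{g}}\bigl[c_t(X_t,U^{1:K}_t) + V_{t+1}(\Pi_{t+1}) \,\big|\, \Delta_t, \Lambda^{1:K}_t\bigr]$ by the induction hypothesis, with $\Pi_{t+1}$ rewritten via \eqref{eq:xt1} as $\theta_t[\Pi_t, Y^{1:K}_{t+1}, U^{1:K}_t]$. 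Second, I would condition on $(\Delta_t, \Lambda^{1:K}_t, U^{1:K}_t)$ to factor the bound into an outer expectation over $U^{1:K}_t$ of an inner conditional expectation evaluated at the realization $u^{1:K}_t$.

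The main obstacle — and the crux of the argument — is verifying that this inner conditional expectation depends on $(\Delta_t, \Lambda^{1:K}_t)$ only through $\Pi_t$ and on $u^{1:K}_t$, so that it matches exactly the quantity minimized in \eqref{theo2:1b}. Using Remark \ref{cor:lemU} for the $X_t$ marginal, Lemma \ref{lem:x_t1ut} for the transition to $X_{t+1}$, and Lemma \ref{lem:y_t} for the emission of $Y^{1:K}_{t+1}$, the joint conditional density of $(X_t, X_{t+1}, Y^{1:K}_{t+1})$ given $(\Delta_t, \Lambda^{1:K}_t, U^{1:K}_t = u^{1:K}_t)$ factors as $\Pi_t(x_t)\, p(x_{t+1}\mid x_t, u^{1:K}_t)\, p(y^{1:K}_{t+1}\mid x_{t+1})$, with no surviving dependence on the full history or on $\textbf{g}$. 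Consequently the inner expectation is a well-defined function $Q_t(\Pi_t, u^{1:K}_t)$, and pointwise $Q_t(\Pi_t, u^{1:K}_t) \ge \inf_{u} Q_t(\Pi_t, u) = V_t(\Pi_t)$. Taking the remaining outer expectation over $U^{1:K}_t$, together with the fact that $\Pi_t$ is measurable with respect to $(\Delta_t, \Lambda^{1:K}_t)$, preserves this pointwise bound and yields $J_t(\textbf{g}) \ge V_t(\Pi_t)$, closing the induction.
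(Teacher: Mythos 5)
Your proposal is correct and follows essentially the same route as the paper: backward induction with equality at $t=T$, the tower property to insert the induction hypothesis, and the observation that the inner conditional expectation given $(\Delta_t,\Lambda^{1:K}_t,U^{1:K}_t)$ is a function of $(\Pi_t,u^{1:K}_t)$ alone, hence bounded below by the infimum defining $V_t$. Your explicit factorization of the joint conditional density via Lemmas \ref{lem:y_t}, \ref{lem:x_t1ut} and Remark \ref{cor:lemU} just spells out a step the paper's proof leaves implicit when it invokes \eqref{theo2:1b}.
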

	\begin{proof}
We prove \eqref{theo2:1c} by induction. For $t=T$,
	\begin{align}			
		J_T(\textbf{g})\coloneqq& ~\mathbb{E}^{\textbf{g}}\Big[c_T(X_T) |~\Delta_{T}, \Lambda^{1:K}_{T} \Big] \nonumber\\
		=& \int_{\mathscr{X}_T} c_T(X_T) ~\Pi_{T}(\Delta_{T}, \Lambda^{1:K}_{T})(X_T)~ dX_T,
		\label{theo2:1d}	
	\end{align}	
and so \eqref{theo2:1c} holds with equality. 

Suppose that \eqref{theo2:1c} holds for $t+1$. Then,
	\begin{align}			
	&J_t(\textbf{g})= \mathbb{E}^{\textbf{g}}\Big[\sum_{l=t}^{T-1}c_l(X_l,U^{1:K}_l)
	+ c_T(X_T) ~|~\Delta_{t}, \Lambda^{1:K}_{t} \Big]\nonumber\\ 
	&= \mathbb{E}^{\textbf{g}}\Big[c_t(X_t,U^{1:K}_t) + \sum_{l=t+1}^{T-1}c_l(X_l,U^{1:K}_l)\nonumber\\
	&+ c_T(X_T) ~|~\Delta_{t}, \Lambda^{1:K}_{t} \Big]\nonumber\\
	&=  \mathbb{E}^{\textbf{g}}\bigg[ \mathbb{E}^{\textbf{g}}  \Big[c_t(X_t,U^{1:K}_t) \nonumber\\ 
	&+\sum_{l=t+1}^{T-1}c_l(X_l,U^{1:K}_l)+ c_T(X_T)~|~\Delta_{t}, \Lambda^{1:K}_{t}, U^{1:K}_{t} \Big] \nonumber\\
	&|~\Delta_{t}, \Lambda^{1:K}_{t}\bigg] \nonumber\\
	&\ge  \mathbb{E}^{\textbf{g}}\bigg[ \mathbb{E}^{\textbf{g}}  \Big[c_t(X_t,U^{1:K}_t) \nonumber\\ 
	&+V_{t+1}\big( \theta_t\big[ \Pi_{t}(\Delta_{t}, \Lambda^{1:K}_{t}), Y^{1:K}_{t+1}, U^{1:K}_t \big]\big) ~|~\Pi_{t}(\Delta_{t}, \Lambda^{1:K}_{t}), \nonumber\\
	&U^{1:K}_{t} \Big] ~|~\Delta_{t}, \Lambda^{1:K}_{t}\bigg]\nonumber\\
	&=  \mathbb{E}^{\textbf{g}}\bigg[ V_{t}\big(  \Pi_{t}(\Delta_{t}, \Lambda^{1:K}_{t})\big)  ~|~\Delta_{t}, \Lambda^{1:K}_{t}\bigg]=V_{t}\big(  \Pi_{t}(\Delta_{t}, \Lambda^{1:K}_{t})\big), 	
	\label{theo2:1e}	
	\end{align}	
where, in the inequality, we used the hypothesis and, in the last equality, we used \eqref{theo2:1b}. Thus, \eqref{theo2:1c} holds for all $t$.
	\end{proof}

In view of Theorem \ref{theo:dp}, we show that an optimal strategy of the team is separated and obtain a classical dynamic programming decomposition where the optimization problem is over the space of the team's decisions.

\begin{theorem} \label{theo:dp2}
	Let 
	\begin{align}			
		&V_t\big(\Pi_{t}(\Delta_{t}, \Lambda^{1:K}_{t})\big)\coloneqq \inf_{u^{1:K}_t\in\prod_{k\in\mathcal{K}} \mathcal{U}_t^k }\mathbb{E}^{\textbf{g}}\Big[c_t(X_t,U^{1:K}_t)\nonumber\\ 
		&+ V_{t+1}\big(\theta_t\big[ \Pi_{t}(\Delta_{t}, \Lambda^{1:K}_{t}), Y^{1:K}_{t+1}, U^{1:K}_t \big]\big)~|~\Pi_{t}=\pi_t, \nonumber\\
		&U^{1:K}_t=u^{1:K}_t  \Big], \label{theo3:1a}
	\end{align}
	and let $\textbf{g}\in\mathcal{G}^s$ be a separated control strategy that achieves the infimum in \eqref{theo3:1a}. Then $\textbf{g}\in\mathcal{G}^s$ is optimal and 
	\begin{align}			
	V_t\big(\Pi_{t}(\Delta_{t}, \Lambda^{1:K}_{t})\big)=J_t(\textbf{g}), \label{theo3:1b}
\end{align}	
with probability $1$.
\end{theorem}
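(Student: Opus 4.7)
The plan is to leverage Theorem \ref{theo:dp} as the lower bound and establish a matching upper bound for the separated strategy $\textbf{g}\in\mathcal{G}^s$ that attains the infimum in \eqref{theo3:1a}. Specifically, Theorem \ref{theo:dp} gives $V_t(\Pi_t) \le J_t(\textbf{g}')$ for \emph{every} $\textbf{g}'\in\mathcal{G}$. So if I can show $V_t(\Pi_t) = J_t(\textbf{g})$ almost surely for the particular $\textbf{g}$ that achieves the infimum, then the chain $J_t(\textbf{g}) = V_t(\Pi_t) \le J_t(\textbf{g}')$ simultaneously proves both assertions of the theorem: the identity \eqref{theo3:1b} and the optimality of $\textbf{g}$.

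I would prove $V_t(\Pi_t) = J_t(\textbf{g})$ by backward induction on $t$, mirroring the derivation \eqref{theo2:1e} from the proof of Theorem \ref{theo:dp} but replacing the single inequality in that chain by an equality. For the base case $t=T$, both $V_T(\Pi_T)$ and $J_T(\textbf{g})$ reduce by definition to $\mathbb{E}^{\textbf{g}}[c_T(X_T)\mid \Pi_T]$, so equality holds a.s. For the inductive step, assume $V_{t+1}(\Pi_{t+1}) = J_{t+1}(\textbf{g})$ a.s. Apply the tower property to condition first on $(\Delta_t, \Lambda_t^{1:K}, U_t^{1:K})$ inside the definition of $J_t(\textbf{g})$; use the update rule \eqref{eq:xt1} from Theorem \ref{theo:y_t} together with Lemma \ref{lem:x_t}--Remark \ref{cor:lemU} to rewrite the inner conditional expectation as a function of $\Pi_t$, $Y_{t+1}^{1:K}$, and $U_t^{1:K}$ only; then invoke the inductive hypothesis to replace the future cost by $V_{t+1}(\theta_t[\Pi_t, Y_{t+1}^{1:K}, U_t^{1:K}])$. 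At that point, the bracketed quantity coincides with the objective inside the infimum of \eqref{theo3:1a}, evaluated at $U_t^{1:K} = g_t(\Pi_t)$. Because $\textbf{g}\in\mathcal{G}^s$ attains the infimum by hypothesis, this value equals $V_t(\Pi_t)$ a.s., closing the induction.

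The main obstacle I expect is measurability rather than algebraic manipulation. Two points require care: first, one must ensure that a measurable selector $g_t$ attaining the infimum in \eqref{theo3:1a} actually exists, which requires an appeal to a measurable selection theorem (e.g., Kuratowski--Ryll-Nardzewski, or a Berge-type maximum theorem) and corresponding regularity on the cost and on $\theta_t$ in $(\pi_t, u_t^{1:K})$. Second, the argument $\Pi_t$ lives in an infinite-dimensional space of densities on $(\mathcal{X}_t, \mathscr{X}_t)$, so one must verify that conditioning on $\Pi_t$ is well-defined and that $g_t(\Pi_t)$ is measurable with respect to $\sigma(\Delta_t, \Lambda_t^{1:K})$ once composed with the map $(\Delta_t, \Lambda_t^{1:K}) \mapsto \Pi_t$. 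Both of these are standard in dynamic programming with infinite-dimensional information states but are the substantive content lurking behind the purely formal induction.

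Finally, specializing the identity $V_0(\Pi_0) = J_0(\textbf{g})$ and comparing with $V_0(\Pi_0) \le J_0(\textbf{g}')$ from Theorem \ref{theo:dp}, taking expectations yields $J(\textbf{g}) \le J(\textbf{g}')$ for every $\textbf{g}'\in\mathcal{G}$, which is precisely the optimality claim; the qualifier ``with probability $1$'' in \eqref{theo3:1b} comes directly from the fact that each step of the inductive equality holds only almost surely on the underlying probability space.
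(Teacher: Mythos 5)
Your proposal is correct and follows essentially the same route as the paper: a backward induction establishing $V_t(\Pi_t)=J_t(\textbf{g})$ for the infimum-achieving separated strategy (base case at $T$, tower property plus the update rule $\theta_t$ and the inductive hypothesis in the step), combined with the lower bound from Theorem \ref{theo:dp} to conclude optimality at $t=0$. Your remarks on measurable selection and conditioning on the infinite-dimensional information state identify real gaps that the paper's own proof leaves implicit, but the core argument is the same.
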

\begin{proof}
	We first prove \eqref{theo3:1b} by induction. For $t=T$,
	\begin{align}			
		J_T(\textbf{g})\coloneqq& ~\mathbb{E}^{\textbf{g}}\Big[c_T(X_T) ~|~\Delta_{T}, \Lambda^{1:K}_{T} \Big] \nonumber\\
		=& \int_{\mathscr{X}_T} c_T(X_T) \Pi_{T}(\Delta_{T}, \Lambda^{1:K}_{T})(X_T) dX_T.
		\label{theo3:1c}	
	\end{align}	
Suppose that \eqref{theo3:1a} holds for $t+1$. Then
	\begin{align}			
	&\inf_{u^{1:K}_t\in\prod_{k\in\mathcal{K}} \mathcal{U}_t^k }\mathbb{E}^{\textbf{g}}\Big[\sum_{l=t}^{T-1}c_l(X_l,U^{1:K}_l)
	+ c_T(X_T) |~\Delta_{t}, \Lambda^{1:K}_{t} \Big]\nonumber\\ 
	&= \inf_{u^{1:K}_t\in\prod_{k\in\mathcal{K}} \mathcal{U}_t^k }\mathbb{E}^{\textbf{g}}\Big[c_t(X_t,U^{1:K}_t) + \sum_{l=t+1}^{T-1}c_l(X_l,U^{1:K}_l)\nonumber\\
	&+ c_T(X_T) ~|~\Delta_{t}, \Lambda^{1:K}_{t} \Big]\nonumber
	\end{align}
	\begin{align}	
	&=\inf_{u^{1:K}_t\in\prod_{k\in\mathcal{K}} \mathcal{U}_t^k }  \mathbb{E}^{\textbf{g}}\bigg[ \mathbb{E}^{\textbf{g}}  \Big[c_t(X_t,U^{1:K}_t) \nonumber\\ 
	&+\sum_{l=t+1}^{T-1}c_l(X_l,U^{1:K}_l)+ c_T(X_T)|~\Delta_{t}, \Lambda^{1:K}_{t}, U^{1:K}_{t} \Big] \nonumber\\
	&|~\Delta_{t}, \Lambda^{1:K}_{t}\bigg] \nonumber\\
	&= \inf_{u^{1:K}_t\in\prod_{k\in\mathcal{K}} \mathcal{U}_t^k } \mathbb{E}^{\textbf{g}}\bigg[ \mathbb{E}^{\textbf{g}}  \Big[c_t(X_t,U^{1:K}_t) \nonumber\\ 
	&+V_{t+1}\big( \theta_t\big[ \Pi_{t}(\Delta_{t}, \Lambda^{1:K}_{t}), Y^{1:K}_{t+1}, U^{1:K}_t \big]\big) ~|~\Pi_{t}(\Delta_{t}, \Lambda^{1:K}_{t}), \nonumber\\
	&U^{1:K}_{t} \Big] ~|~\Delta_{t}, \Lambda^{1:K}_{t}\bigg]\nonumber\\
	&= \mathbb{E}^{\textbf{g}}\bigg[ V_{t}\big(  \Pi_{t}(\Delta_{t}, \Lambda^{1:K}_{t})\big)  ~|~\Delta_{t}, \Lambda^{1:K}_{t}\bigg] =V_{t}\big(  \Pi_{t}(\Delta_{t}, \Lambda^{1:K}_{t})\big), 	
	\label{theo3:1d}	
	\end{align}	
where, in the third equality, we used the hypothesis and, in the forth equality, $u^{1:K}_t$ achieves the infimum. Thus, \eqref{theo3:1a} holds for all $t$.

For $t=0$, \eqref{theo3:1b} yields $J_0(\textbf{g})=V_0\big(\Pi_{0}(\Delta_{0}, \Lambda^{1:K}_{0})\big)$. Taking expectations
	\begin{align}			
		J(\textbf{g})= \mathbb{E}^{\textbf{g}}\Big[J_0(\textbf{g})\Big]
 = \mathbb{E}^{\textbf{g}}\Big[V_0\big(\Pi_{0}(\Delta_{0}, \Lambda^{1:K}_{0})\big)  \Big].
	\label{theo3:1e}	
	\end{align}	
By Theorem \ref{theo:dp}, it follows that for any other $\textbf{g}'\in\mathcal{G}$,  
	\begin{align}			
	J(\textbf{g}')\ge \mathbb{E}^{\textbf{g}}\Big[V_0\big(\Pi_{0}(\Delta_{0}, \Lambda^{1:K}_{0})\big)  \Big].
	\label{theo3:1f}	
	\end{align}	
\end{proof}

The following results are derived by using ideas from \cite{Sondik1971, Smallwood1973}.

\begin{lemma} \label{lemma:homo}
	Let $V_t\big(\Pi_{t}(\Delta_{t}, \Lambda^{1:K}_{t})\big)$ be functions defined recursively for all $\textbf{g}\in\mathcal{G}^s$ by
\begin{align}
	&V_T\big(\Pi_{T}(\Delta_{T}, \Lambda^{1:K}_{T})\big)\coloneqq \mathbb{E}^{\textbf{g}}\Big[c_T(X_T)~|~\Pi_{T}=\pi_T \Big],\label{homo:1a}
\end{align}
\begin{align}			
	&V_t\big(\Pi_{t}(\Delta_{t}, \Lambda^{1:K}_{t})\big)\coloneqq \inf_{u^{1:K}_t\in\prod_{k\in\mathcal{K}} \mathcal{U}_t^k }\mathbb{E}^{\textbf{g}}\Big[c_t(X_t,U^{1:K}_t)\nonumber\\ 
	&+ V_{t+1}\big(\theta_t\big[ \Pi_{t}(\Delta_{t}, \Lambda^{1:K}_{t}), Y^{1:K}_{t+1}, U^{1:K}_t\big]\big)~|~\Pi_{t}=\pi_t, \nonumber\\
	&U^{1:K}_t=u^{1:K}_t  \Big], \label{homo:1b}
\end{align}
where $c_T(X_T)$ is the cost function at $T$, and $\pi_T$, $\pi_t$, $u^{1:K}_t$ are the realizations of $\Pi_{T}$, $\Pi_{t}$, and $U^{1:K}_t$, respectively.
Then, for all $t=0,\ldots,T$, $V_t\big(\Pi_{t}(\Delta_{t}, \Lambda^{1:K}_{t})\big)$ is positive homogeneous, i.e., for any  $\rho >0$, $V_t\big(\rho~ \Pi_{t}(\Delta_{t}, \Lambda^{1:K}_{t})\big)= \rho ~V_t\big(\Pi_{t}(\Delta_{t}, \Lambda^{1:K}_{t})\big)$. 
\end{lemma}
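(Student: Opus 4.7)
The plan is to prove positive homogeneity by backward induction on $t$, starting from the terminal time $T$ and descending to $0$. The induction structure mirrors the recursion \eqref{homo:1a}--\eqref{homo:1b} used to define $V_t$, and the key observation is that the Bayesian update $\theta_t$ is invariant under positive rescaling of its first argument, while the conditional expectations appearing in \eqref{homo:1b} are linear in $\Pi_t$.

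For the base case $t=T$, I would write
\begin{align}
V_T\bigl(\rho\,\Pi_T\bigr) = \int_{\mathscr{X}_T} c_T(X_T)\,\rho\,\Pi_T(X_T)\,dX_T = \rho\,V_T(\Pi_T),\nonumber
\end{align}
which is immediate since $V_T$ is given by an integral that is linear in its argument. For the inductive step, assume $V_{t+1}$ is positive homogeneous (although, as will be evident, this hypothesis is not actually needed because $\theta_t$ erases the scaling). The first step is to establish the scaling identity $\theta_t[\rho\Pi_t,Y_{t+1}^{1:K},U_t^{1:K}] = \theta_t[\Pi_t,Y_{t+1}^{1:K},U_t^{1:K}]$. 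Unfolding the update via Bayes' rule along the lines of the computations in the proof of Theorem~\ref{theo:y_t}, the posterior $\Pi_{t+1}$ is a ratio whose numerator and denominator are both linear integrals against $\Pi_t$, so replacing $\Pi_t$ by $\rho\Pi_t$ scales both by $\rho$ and the factor cancels.

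The second step is to rewrite the two terms on the right-hand side of \eqref{homo:1b} explicitly as integrals with respect to $\Pi_t$. The instantaneous cost satisfies
\begin{align}
\mathbb{E}^{\textbf{g}}\!\bigl[c_t(X_t,U_t^{1:K})\,|\,\Pi_t=\rho\pi_t,\,U_t^{1:K}=u_t^{1:K}\bigr] = \rho\!\int c_t(X_t,u_t^{1:K})\,\pi_t(X_t)\,dX_t,\nonumber
\end{align}
and the predictive density of $Y_{t+1}^{1:K}$ under $\rho\pi_t$, namely $p(Y_{t+1}^{1:K}\,|\,\rho\pi_t,u_t^{1:K}) = \rho\,p(Y_{t+1}^{1:K}\,|\,\pi_t,u_t^{1:K})$, also scales linearly. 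Combining with the invariance of $\theta_t$ yields
\begin{align}
\mathbb{E}^{\textbf{g}}\!\bigl[V_{t+1}(\theta_t[\rho\Pi_t,Y_{t+1}^{1:K},U_t^{1:K}])\,|\,\Pi_t=\rho\pi_t,\,U_t^{1:K}=u_t^{1:K}\bigr] = \rho\,\mathbb{E}^{\textbf{g}}\!\bigl[V_{t+1}(\theta_t[\Pi_t,Y_{t+1}^{1:K},U_t^{1:K}])\,|\,\Pi_t=\pi_t,\,U_t^{1:K}=u_t^{1:K}\bigr].\nonumber
\end{align}

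Finally, since $\rho>0$ is a common positive factor of both terms inside the infimum in \eqref{homo:1b}, it can be pulled outside, giving $V_t(\rho\Pi_t)=\rho\,V_t(\Pi_t)$ and completing the induction. The main technical obstacle I anticipate is justifying the scaling of the predictive density $p(Y_{t+1}^{1:K}\,|\,\Pi_t,u_t^{1:K})$ cleanly, since this requires explicitly expressing the conditional expectation as an iterated integral against $\Pi_t$ through the transition kernel and the observation kernel (invoking Lemmas~\ref{lem:y_t} and \ref{lem:x_t1ut} to ensure these kernels do not themselves depend on $\textbf{g}$ or on $\Pi_t$), so that the linearity in $\Pi_t$ is transparent.
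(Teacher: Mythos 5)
Your proposal is correct and follows essentially the same route as the paper's proof in Appendix B: backward induction from $t=T$, an explicit representation of both terms of the recursion as integrals that are linear in $\Pi_t$, and the key cancellation $\theta_t\big[\rho\,\Pi_t, Y^{1:K}_{t+1}, U^{1:K}_t\big]=\theta_t\big[\Pi_t, Y^{1:K}_{t+1}, U^{1:K}_t\big]$, which is precisely the paper's identity $\rho\,\Pi_{t+1}=\Pi_{t+1}$ obtained by cancelling $\rho$ in the numerator and denominator of the Bayes update. Your observation that the induction hypothesis on $V_{t+1}$ is not actually needed is consistent with the paper's argument, which likewise extracts the factor $\rho$ from the explicit integral rather than from homogeneity of $V_{t+1}$.
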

\begin{proof}
See Appendix \ref{app:2}.
\end{proof}

\begin{theorem} \label{theo:concave}
	Let 
\begin{align}			
	&V_t\big(\Pi_{t}(\Delta_{t}, \Lambda^{1:K}_{t})\big)\coloneqq \inf_{u^{1:K}_t\in\prod_{k\in\mathcal{K}} \mathcal{U}_t^k }\mathbb{E}^{\textbf{g}}\Big[c_t(X_t,U^{1:K}_t)\nonumber\\ 
	&+ V_{t+1}\big(\theta_t\big[ \Pi_{t}(\Delta_{t}, \Lambda^{1:K}_{t}), Y^{1:K}_{t+1}, U^{1:K}_t\big]\big)~|~\Pi_{t}=\pi_t, \nonumber\\
	&U^{1:K}_t=u^{1:K}_t  \Big], \label{theo_homo:1a}
\end{align}
where $c_T(X_T)$ is the cost function at $T$, and $\pi_T$, $\pi_t$, $u^{1:K}_t$ are the realizations of $\Pi_{T}$, $\Pi_{t}$, and $U^{1:K}_t$, respectively.
Then,  for all $t=0,\ldots,T$, $V_t\big(\Pi_{t}(\Delta_{t}, \Lambda^{1:K}_{t})\big)$ is concave with respect to $\Pi_{t}(\Delta_{t}, \Lambda^{1:K}_{t})$.
\end{theorem}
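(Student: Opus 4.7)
The plan is to prove Theorem \ref{theo:concave} by backward induction on $t$, in the style of Smallwood and Sondik for POMDP value functions, leveraging the positive homogeneity established in Lemma \ref{lemma:homo}. The base case $t=T$ is immediate from \eqref{homo:1a}: $V_T(\pi_T)=\int c_T(x_T)\,\pi_T(x_T)\,dx_T$ is linear in $\pi_T$, hence concave.

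For the inductive step, suppose $V_{t+1}$ is concave. For each fixed $u_t^{1:K}$, I would rewrite the bracketed expression in \eqref{theo_homo:1a} as a function of $\pi_t$. The immediate-cost term $\int c_t(x_t,u_t^{1:K})\,\pi_t(x_t)\,dx_t$ is linear in $\pi_t$. The continuation expectation can be written as
\[
\int p^{\textbf{g}}(y_{t+1}^{1:K}\mid \pi_t,u_t^{1:K})\, V_{t+1}\!\left(\theta_t[\pi_t,y_{t+1}^{1:K},u_t^{1:K}]\right) dy_{t+1}^{1:K}.
\]
The key observation is the Bayesian factorisation, which follows from Lemmas \ref{lem:y_t}--\ref{lem:x_t} and the explicit form of $\theta_t$ derived in the proof of Theorem \ref{theo:y_t}:
\[
p^{\textbf{g}}(y_{t+1}^{1:K}\mid \pi_t,u_t^{1:K})\cdot \theta_t[\pi_t,y_{t+1}^{1:K},u_t^{1:K}](x_{t+1}) = p(y_{t+1}^{1:K}\mid x_{t+1})\int p(x_{t+1}\mid x_t,u_t^{1:K})\,\pi_t(x_t)\,dx_t,
\]
which is the joint density of $(X_{t+1}, Y_{t+1}^{1:K})$ given $(\pi_t, u_t^{1:K})$ and is, crucially, linear in $\pi_t$.

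Applying the positive homogeneity of $V_{t+1}$ from Lemma \ref{lemma:homo} absorbs the scalar weight inside $V_{t+1}$, so each integrand becomes $V_{t+1}$ evaluated at a linear function of $\pi_t$, and is therefore concave in $\pi_t$ by the induction hypothesis (composition of a concave function with a linear map is concave). Integration over $y_{t+1}^{1:K}$ and addition of the linear immediate cost both preserve concavity, so the bracketed expression in \eqref{theo_homo:1a} is concave in $\pi_t$ for each fixed $u_t^{1:K}$. Since the pointwise infimum of a family of concave functions is concave, $V_t$ is concave, closing the induction.

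The main technical obstacle I anticipate is handling the measure-zero observation events where $p^{\textbf{g}}(y_{t+1}^{1:K}\mid \pi_t,u_t^{1:K})=0$ and $\theta_t$ is not uniquely defined. With the convention $0\cdot V_{t+1}(\cdot)=0$, the factorisation identity above still holds since its right-hand side also vanishes, and the problematic points contribute nothing to the integral. A secondary concern is measurability of $V_{t+1}$ on the infinite-dimensional space of densities so that the outer integral is well-defined, but this is routine given that $V_{t+1}$ is expressed as an infimum of measurable functions of $\pi_t$.
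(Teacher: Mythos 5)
Your proposal is correct and follows essentially the same route as the paper's proof in Appendix C: backward induction using the positive homogeneity of $V_{t+1}$ (Lemma \ref{lemma:homo}) to absorb the observation-likelihood weight into the argument of $V_{t+1}$, so that the integrand becomes $V_{t+1}$ composed with the unnormalized (hence linear in $\pi_t$) Bayes update, after which concavity is preserved by integration and by taking the pointwise infimum over $u_t^{1:K}$. Your additional remarks on measure-zero observation events and measurability are refinements the paper does not spell out, but they do not change the argument.
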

\begin{proof}
See Appendix \ref{app:3}.	
\end{proof}

\section{Structural Results for the Team Members} \label{sec:4}

In this section, we address Problem \ref{problem1} from the point of view of a team member $k\in\mathcal{K}$ who seeks to derive their optimal strategy $\textbf{g}^k=\{g_t^k; ~k\in\mathcal{K};~ t=0,\ldots,T-1\}$ which will constitute the team's strategy $\textbf{g}=\{\textbf{g}^1,\ldots, \textbf{g}^K\}\in\mathcal{G}^{Dec}$.

\subsection{Information State -- Team Members}

We first identify an appropriate information state for member $k\in\mathcal{K}$ that can be used to formulate a classical dynamic programming decomposition for Problem \ref{problem1}.

\begin{definition} \label{def:infomember}
	An information state, $\Pi_t^k$, for member $k\in\mathcal{K}$ of a team described by the state equation \eqref{eq:state}  (a)  is a function of  $(\Delta_t, \Lambda_t^{k})$, and (b) $\Pi_{t+1}^k$ can be determined from $\Pi_t^k$, $Y_{t-n+1}^{1:K}$, $U_{t-n+1}^{1:K}$, $Y_{t+1}^{k}$, and $U_{t}^{k}$.
\end{definition}

To proceed,  we first need to prove some essential properties of the conditional probabilities densities related to the observation of team member $k\in\mathcal{K}$ and the team's state.

\begin{lemma} \label{lem:y_tk}
	For any  control strategy $\textbf{g}=\{\textbf{g}^1,\ldots, \textbf{g}^K\}$ of the team, 
	\begin{align}\label{eq:y_tk}
		p^{\textbf{g}}(Y^k_{t+1}~|~X_{t+1}, \Delta_{t+1}, \Lambda^k_t, U^k_t)= p(Y^k_{t+1}~|~X_{t+1}),
	\end{align}
	for all $t=0,1,\ldots, T-1.$
\end{lemma}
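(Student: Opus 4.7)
The plan is to mirror the argument of Lemma \ref{lem:y_t}, exploiting the fact that $Y^k_{t+1} = h^k_{t+1}(X_{t+1}, Z^k_{t+1})$ is a deterministic function of the state at time $t+1$ and the sensor noise $Z^k_{t+1}$, which is independent of every other primitive random variable. Given $X_{t+1}$, the distribution of $Y^k_{t+1}$ is completely specified by the distribution of $Z^k_{t+1}$, and any additional conditioning will drop out as long as the extra random variables are independent of $Z^k_{t+1}$.

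Concretely, I would first write
\begin{align*}
p^{\textbf{g}}(Y^k_{t+1}~|~X_{t+1}, \Delta_{t+1}, \Lambda^k_t, U^k_t) = p^{\textbf{g}}(Z^k_{t+1}\in B~|~X_{t+1}, \Delta_{t+1}, \Lambda^k_t, U^k_t),
\end{align*}
for some $B\in\mathscr{Z}^k$ induced by $h^k_{t+1}$. The conditioning variables are all constructed from the primitives $X_0$, $\{W_s\}_{s=0}^{t}$ and $\{Z^{1:K}_s\}_{s=0}^{t}$ together with the choice of $\textbf{g}$: $\Delta_{t+1}$ only contains observations and decisions up to time $t+1-n \le t$, $\Lambda^k_t$ contains only member $k$'s history up to time $t$, and $U^k_t = g^k_t(\Lambda^k_t, \Delta_t)$ is likewise $\sigma(X_0, W_{0:t-1}, Z^{1:K}_{0:t})$-measurable. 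Since the problem formulation stipulates that $\{Z^k_s\}$ is a sequence of independent random variables that is independent of $X_0$ and $\{W_s\}$, the noise $Z^k_{t+1}$ is independent of the entire conditioning $\sigma$-algebra, so
\begin{align*}
p^{\textbf{g}}(Z^k_{t+1}\in B~|~X_{t+1}, \Delta_{t+1}, \Lambda^k_t, U^k_t) = p(Z^k_{t+1}\in B),
\end{align*}
which in particular does not depend on $\textbf{g}$.

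Applying the same reasoning with only $X_{t+1}$ in the conditioning gives $p(Y^k_{t+1}|X_{t+1}) = p(Z^k_{t+1}\in B)$, and equating the two expressions yields \eqref{eq:y_tk}. The only subtlety to check is the measurability claim for the conditioning variables, i.e.\ verifying that none of $\Delta_{t+1}$, $\Lambda^k_t$, $U^k_t$ ever involves $Z^k_{t+1}$; this follows directly from the information-structure definitions \eqref{eq:delta}--\eqref{eq:lambda} together with the hypothesis $n\ge 1$. I do not expect any genuine obstacle: the lemma is the individual-member analogue of Lemma \ref{lem:y_t} and its proof is a routine repetition of the independence argument used there.
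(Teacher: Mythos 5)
Your proposal is correct and follows essentially the same route as the paper's proof: reduce $Y^k_{t+1}$ to the sensor noise $Z^k_{t+1}$ via the observation equation and invoke the independence of $\{Z^k_t\}$ from all the conditioning variables to drop both the extra conditioning and the strategy dependence. The only difference is cosmetic — you justify dropping the conditioning by tracing measurability back to the primitive random variables, whereas the paper asserts that step directly from the observation equation — so no further comment is needed.
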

\begin{proof}
	The realization of  $Y^k_{t+1}$ is statistically determined by the conditional distribution of $Y^k_{t+1}$ given $X_{t+1}$ in \eqref{eq:observe}, hence
	\begin{align}\label{eq:lemy_tka}
		p^{\textbf{g}}(Y^k_{t+1}~|~X_{t+1}, \Delta_{t+1}, \Lambda^k_t, U^k_t)= p^{\textbf{g}}(Y^k_{t+1}~|~X_{t+1}).
	\end{align}		
	However,
	\begin{align}\label{eq:lemy_tkb}
		p^{\textbf{g}}(Y^k_{t+1}~|~X_{t+1}) = p^{\textbf{g}}(Z^k_{t+1}\in B^k~|~X_{t+1}),
	\end{align}	
	where $B^k\in\mathscr{Z}^k$. Since, $\{Z_{t}^k: t=0,\ldots,T-1; k=1,\ldots,K\}$ is a sequence of independent random variables that are independent of $X_{t+1}$, we have
	\begin{align}\label{eq:lemy_tkc}
		p^{\textbf{g}}(Z^k_{t+1}\in B^k~|~X_{t+1}) = p(Z^k_{t+1}\in B^k).
	\end{align}
	Thus, 
	\begin{align}\label{eq:lemy_tkd}
		p^{\textbf{g}}(Y^k_{t+1}~|~X_{t+1}) = p(Y^k_{t+1}~|~X_{t+1}).
	\end{align} 
	The result follows from \eqref{eq:lemy_tka} and \eqref{eq:lemy_tkd}.
\end{proof}

\begin{lemma} \label{lem:x_t1k}
	For any  control strategy $\textbf{g}=\{\textbf{g}^1,\ldots, \textbf{g}^K\}$ of the team, $	p^{\textbf{g}}(X_{t+1}~|~\Delta_{t+1}, \Lambda^k_t, U^k_t)$ does not depend on the control strategy $\textbf{g}^k$ of member $k$. It depends only on the strategy $\textbf{g}^{-k}=(\textbf{g}^1,\ldots,\textbf{g}^{k-1}, \textbf{g}^{k+1}, \ldots,\textbf{g}^K),$ of the other team members, i.e.,
	\begin{align}
		p^{\textbf{g}}(X_{t+1}~|~ \Delta_{t+1}, \Lambda^k_t, U^k_t) = p^{\textbf{g}^{-k}}(X_{t+1}~|~ \Delta_{t+1}, \Lambda^k_t, U^k_t), \label{eq:x_t1k}
	\end{align}
	for all $t=0,1,\ldots, T-1.$		
\end{lemma}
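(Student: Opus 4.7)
The plan is to expand the conditional density by marginalizing over the unobserved quantities at time $t$, and then observe that the strategy $\textbf{g}^k$ enters only through variables that have already been conditioned on.

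First I would write
\begin{align*}
	&p^{\textbf{g}}(X_{t+1}~|~\Delta_{t+1},\Lambda^k_t,U^k_t) \\
	&= \int p^{\textbf{g}}\bigl(X_{t+1},X_t,U^{-k}_t,\Lambda^{-k}_t \,|\, \Delta_{t+1},\Lambda^k_t,U^k_t\bigr)\, dX_t\, dU^{-k}_t\, d\Lambda^{-k}_t,
\end{align*}
and factor the integrand as a product of three conditional densities: (i) $p^{\textbf{g}}(X_{t+1}\,|\,X_t,U^{1:K}_t,\Delta_{t+1},\Lambda^k_t,\Lambda^{-k}_t)$, (ii) $p^{\textbf{g}}(U^{-k}_t\,|\,X_t,\Lambda^{-k}_t,\Delta_{t+1},\Lambda^k_t,U^k_t)$, and (iii) $p^{\textbf{g}}(X_t,\Lambda^{-k}_t\,|\,\Delta_{t+1},\Lambda^k_t,U^k_t)$.

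Next I would verify that each factor is independent of $\textbf{g}^k$. For (i), the state equation \eqref{eq:state} and the independence of $\{W_t\}$ make this density equal to $p(X_{t+1}\,|\,X_t,U^{1:K}_t)$ by an argument identical to Lemma~\ref{lem:x_t1ut}. For (ii), since $U^{-k}_t = g^{-k}_t(\Lambda^{-k}_t,\Delta_t)$ with $\Delta_t$ embedded in $\Delta_{t+1}$, this factor is a deterministic function of the conditioning variables and $\textbf{g}^{-k}$ alone. For (iii), the key observation is that $(\Delta_{t+1},\Lambda^k_t,U^k_t)$ already contains $U^k_{0:t}$; hence the realization of member $k$'s entire past control sequence has been fixed, so the contribution of $\textbf{g}^k$ has been absorbed into the conditioning. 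Concretely, the joint law of $(X_t,\Lambda^{-k}_t)$ given $(\Delta_{t+1},\Lambda^k_t,U^k_t)$ can be computed by recursion starting from the primitive random variables $X_0,\{W_s\},\{Z^{1:K}_s\}$ and successively applying the state and observation equations, using the observed values of $U^k_{0:t}$ in place of any occurrence of $g^k_s$; what remains are the strategies $g^{-k}_s$ generating the unobserved $U^{-k}_s$ for $s$ in the ``gap'' between $t-n+2$ and $t-1$.

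Combining the three factors, the integrand and hence the whole integral depends only on $\textbf{g}^{-k}$ and the conditioning variables, yielding \eqref{eq:x_t1k}. The main obstacle I anticipate is the careful bookkeeping in step (iii): one must make precise that all occurrences of $\textbf{g}^k$ in the recursion that generates $(X_t,\Lambda^{-k}_t)$ are evaluated at points where the corresponding control $U^k_s$ is part of the conditioning, so that $\textbf{g}^k$ can be replaced by the observed $U^k_s$ without loss. This is essentially an induction on $s$ from $0$ up to $t$, paralleling the logic used in Lemma~\ref{lem:x_t}.
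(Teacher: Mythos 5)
Your proposal is correct and rests on the same underlying idea as the paper's proof (factor the conditional law of $X_{t+1}$ into pieces and show each piece is free of $\textbf{g}^k$), but the decomposition you choose is genuinely different and, in fact, more complete. The paper's proof works directly at the level of the primitive variables: it writes $p^{\textbf{g}}(X_{t+1}\,|\,\Delta_{t+1},\Lambda^k_t,U^k_t)$ as the joint conditional law of $(W_t, U^{-k}_t, Z^k_t)$, factors it as a product, and disposes of each factor — the noise terms because $\{W_t\}$ and $\{Z^k_t\}$ are independent primitives, and the $U^{-k}_t$ term because it is generated by $\textbf{g}^{-k}$. It never explicitly marginalizes over $X_t$ or $\Lambda^{-k}_t$, so the dependence of the conditional law of $X_t$ on the strategy is left implicit. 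Your version instead integrates out $(X_t, U^{-k}_t, \Lambda^{-k}_t)$, isolates the transition kernel and the deterministic map $U^{-k}_t=g^{-k}_t(\Lambda^{-k}_t,\Delta_t)$ (note the small imprecision: this factor also depends on the integration variable $\Lambda^{-k}_t$, not only on the conditioning variables, though that does not affect the conclusion), and then confronts head-on the one step the paper glosses over, namely that $p^{\textbf{g}}(X_t,\Lambda^{-k}_t\,|\,\Delta_{t+1},\Lambda^k_t,U^k_t)$ is independent of $\textbf{g}^k$. Your justification of that step — that $(\Delta_{t+1},\Lambda^k_t,U^k_t)$ contains $U^k_{0:t}$ and indeed the entire argument $(\Lambda^k_s,\Delta_s)$ of $g^k_s$ for every $s\le t$, so each occurrence of $g^k_s$ in the Bayes-rule expansion reduces to an indicator of conditioned variables that cancels between numerator and denominator — is the standard policy-independence argument from the delayed-sharing literature, and you correctly identify it as the place where the real bookkeeping lives. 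What your route buys is rigor on exactly the point the paper elides; what the paper's route buys is brevity, at the cost of leaving the conditional law of $X_t$ unexamined.
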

\begin{proof} 	
	Since  $X_{t+1}^k=f_{t}(X_{t}, U_{t}^{1:K},W_t)$ and $\Lambda_t^k\colon= (Y_{t-n+1:t}^{k}, U_{t-n+1:t-1}^{k})$, and $Y_{t}^k = h_t^k(X_{t}, Z_{t}^k)$ we have
	\begin{align}
		&p^{\textbf{g}}(X_{t+1} ~|~\Delta_{t+1}, \Lambda^k_t,  U^k_t )\nonumber\\
		&= p^{\textbf{g}}(W_{t}\in A, U^{-k}_{t}, Z^k_{t}\in B^k ~|~\Delta_{t+1}, \Lambda^k_t,  U^k_t )\nonumber\\
		&= p^{\textbf{g}}(W_{t}\in A ~|~\Delta_{t+1}, \Lambda^k_t,  U^k_t ) \cdot p^{\textbf{g}}(U^{-k}_{t}~|~\Delta_{t+1}, \Lambda^k_t,  U^k_t )\nonumber\\
		&\cdot p^{\textbf{g}}( Z^k_{t}\in B^k ~|~\Delta_{t+1}, \Lambda^k_t,  U^k_t )\nonumber\\
		&= p(W_{t}\in A ~|~\Delta_{t+1}, \Lambda^k_t,  U^k_t ) \cdot p^{\textbf{g}{-k}}(U^{-k}_{t} ~|~\Delta_{t+1})\nonumber\\
		&\cdot p( Z^k_{t}\in B^k ~|~\Delta_{t+1}, \Lambda^k_t,  U^k_t ),
		\label{lem:x_t1k_a}
	\end{align}	
where $A\in\mathscr{W}$, $B^k\in\mathscr{Z}^k$, and $U^{-k}_{t}=(U_{t}^1,\ldots,U_{t}^{k-1},$   $U_{t}^{k+1}, \ldots,U_{t}^K)$. In the last equality, the second term depends only on $\textbf{g}^{-k}$ while we dropped the superscript $\textbf{g}$ in $p(W_{t}\in A ~|~\Delta_{t+1}, \Lambda^k_t,  U^k_t )$ and $p( Z^k_{t}\in B^k ~|~\Delta_{t+1}, \Lambda^k_t,  U^k_t )$ since both $W_{t}$ and $Z^k_{t}$ are two sequences of independent random variables, and the evolution of their probability measure does not depend on the control strategy.		
\end{proof}

\begin{theorem} [Information State -- Team Members]\label{theo:y_tk}
	For any control strategy $\textbf{g}=\{\textbf{g}^1,\ldots, \textbf{g}^K\}\in\mathcal{G}^{Dec}$ of the team, the conditional probability density $p(X_{t}~|~\Delta_{t}, \Lambda^{k}_{t})$ does not depend on the control strategy $\textbf{g}^k$ of member $k$. It depends only on the strategy $\textbf{g}^{-k}=(\textbf{g}^1,\ldots,\textbf{g}^{k-1}, \textbf{g}^{k+1}, \ldots,\textbf{g}^K)$ of the other team members. It is an information state of the team member $k$, i.e., $\Pi_{t}^k(\Delta_{t}, \Lambda^{k}_{t})(X_{t})=p^{\textbf{g}^{-k}}(X_{t}~|~\Delta_{t}, \Lambda^{k}_{t}),$ that can be evaluated from  $\Delta_{t}$ and  $\Lambda^{k}_{t}$. Moreover, there is a function $\theta_t^k$, which does not depend on the control strategy $\textbf{g}^k$ of member $k$, such that
	\begin{align}\label{eq:xt1aef}
		&\Pi_{t+1}^k(\Delta_{t+1}, \Lambda^{k}_{t+1})(X_{t+1})\nonumber\\
		&=\theta^k_t\big[ \Pi^k_{t}(\Delta_{t}, \Lambda^k_{t})(X_{t}),  \Delta_{t+1}, \Lambda^{k}_{t+1}  \big],
	\end{align}
	for all $t=0,1,\ldots, T-1.$
\end{theorem}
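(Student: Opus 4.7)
The plan is to prove Theorem \ref{theo:y_tk} by induction on $t$, in direct analogy with the proof of Theorem \ref{theo:y_t} but conditioning only on member $k$'s data $(\Delta_t,\Lambda_t^k)$ rather than on the full team information. The overall strategy is Bayesian filtering: derive a recursion that expresses $\Pi_{t+1}^k$ in terms of $\Pi_t^k$ and the incremental data at $t+1$, and then verify, with Lemmas \ref{lem:y_tk} and \ref{lem:x_t1k} in hand, that every ingredient in this recursion is free of any dependence on $\textbf{g}^k$. At the base case $t=0$, $\Lambda_0^k=(Y_0^k)$ carries no past control information, so $\Pi_0^k(X_0)=p(X_0\mid Y_0^k)$ is obtained from the prior of $X_0$ and the observation kernel $h_0^k$ alone, and the claim is immediate.

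For the inductive step, assume $\Pi_t^k=p^{\textbf{g}^{-k}}(X_t\mid \Delta_t,\Lambda_t^k)$ has been shown independent of $\textbf{g}^k$. The fresh data acquired between $t$ and $t+1$ are, on the private side, $(Y_{t+1}^k,U_t^k)$, and, on the shared side, $(Y_{t-n+1}^{1:K},U_{t-n+1}^{1:K})$, which the $n$-step delay promotes from private to common. I would split the recursion into two standard filtering steps. First, a prediction step writing
\begin{align*}
p(X_{t+1}\mid \Delta_t,\Lambda_t^k,U_t^k)=\int p(X_{t+1}\mid X_t,U_t^{1:K})\,p(X_t,U_t^{-k}\mid \Delta_t,\Lambda_t^k,U_t^k)\,dX_t\,dU_t^{-k},
\end{align*}
in which the transition kernel is strategy-free by Lemma \ref{lem:x_t1ut} and the joint measure on $(X_t,U_t^{-k})$ factors as $\Pi_t^k(X_t)$ times a conditional law of $U_t^{-k}$ governed solely by $\textbf{g}^{-k}$ through the mechanism exposed in Lemma \ref{lem:x_t1k}. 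Second, an update step incorporating the new observations by Bayes' rule, whose own-observation likelihood $p(Y_{t+1}^k\mid X_{t+1})$ is strategy-free by Lemma \ref{lem:y_tk}, and whose treatment of the newly shared pair $(Y_{t-n+1}^{1:K},U_{t-n+1}^{1:K})$ invokes only primitive kernels and quantities already expressed through $\Pi_t^k$ and $\textbf{g}^{-k}$. Composing prediction with update yields the desired $\theta_t^k$, establishing both the information-state property and the strategy-independence claim simultaneously.

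The principal obstacle, which is absent from the manager's setting of Theorem \ref{theo:y_t}, is the marginalization over the unobserved actions $U_t^{-k}$. Unlike the manager, who conditions on the full vector $U_t^{1:K}$ and thereby reduces the one-step transition to the deterministic kernel of $W_t$, member $k$ must integrate out $U_t^{-k}$ against a conditional law whose dependence on strategies requires scrutiny. The delicate point is to certify that this marginalization depends on $\textbf{g}^{-k}$ but not on $\textbf{g}^k$. I would handle this by strengthening the induction hypothesis so that, jointly with $\Pi_t^k$ being $\textbf{g}^k$-free, the conditional distribution of $U_t^{-k}$ given $(\Delta_t,\Lambda_t^k,U_t^k)$ is also $\textbf{g}^k$-free. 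This is credible because each $U_s^{-k}$ is, by construction, a measurable function of $(\Delta_s,\Lambda_s^{-k})$ under $\textbf{g}^{-k}$, and the only channel through which $\textbf{g}^k$ might enter is via past choices $U_{0:t-1}^k$, all of which are already absorbed in the conditioning through $\Delta_t$ and $\Lambda_t^k$. Once this auxiliary statement is verified in parallel with the main induction, the recursion above delivers $\Pi_{t+1}^k$ as a function $\theta_t^k$ of $\Pi_t^k$, $\Delta_{t+1}$ and $\Lambda_{t+1}^k$ with no residual dependence on $\textbf{g}^k$, completing the proof of Theorem \ref{theo:y_tk}.
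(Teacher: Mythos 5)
Your proposal is correct and follows essentially the same route as the paper's Appendix~D: a Bayes update whose likelihood $p(Y^k_{t+1}\mid X_{t+1})$ is strategy-free by Lemma~\ref{lem:y_tk}, composed with a prediction step through the transition kernel in which the unobserved actions $U^{-k}_t$ are marginalized out. The ``strengthened induction hypothesis'' you introduce to certify that the conditional law of $U^{-k}_t$ is free of $\textbf{g}^k$ is precisely the content of Lemma~\ref{lem:x_t1k}, which the paper proves separately and then invokes, so your induction wrapper repackages rather than changes the argument.
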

\begin{proof}
	See Appendix \ref{app:4}.
\end{proof}

Note that the information state $\Pi_{t+1}^k(\Delta_{t+1}, \Lambda^{k}_{t+1})(X_{t+1})=p(X_{t+1}~|~\Delta_{t+1}, \Lambda^{k}_{t+1})$ of member $k$ is the entire probability density function and not just its value at any particular realization of  $(\Delta_{t+1}, \Lambda_{t+1}^{k})$. This is because to compute $\Pi_{t+1}^k(\Delta_{t+1}, \Lambda^{k}_{t+1})(X_{t+1})$ for any particular $X_{t+1}$, we need the probability density functions $p(~\cdot ~|~ \Delta_{t}, \Lambda^{k}_t, U^{k}_t)$ and $p(~\cdot ~|~ \Delta_{t}, \Lambda^{k}_t)$. This implies that the information state takes values in the space of these probability densities on the measurable space $(\mathcal{X}_t, \mathscr{X}_t)$, which is an infinite-dimensional space.

\begin{lemma} \label{cor:theta}
	The information state of the team $\Pi_{t+1}(\Delta_{t+1}, \Lambda^{1:K}_{t+1})(X_{t+1})$ is a function of the information state $\Pi_{t+1}^k(\Delta_{t+1}, \Lambda^{k}_{t+1})(X_{t+1})$ of each team member $k\in\mathcal{K}$, $\Delta_{t+1}$, and $\Lambda^{1:K}_{t+1}$ for all $t=0,1,\ldots, T-1.$
	\begin{proof}
		 By applying Bayes' rule, for all $t=0,1,\ldots, T-1,$ we have
		\begin{align}\label{cor:eq:1}
	&\Pi_{t+1}(\Delta_{t+1}, \Lambda^{1:K}_{t+1})(X_{t+1})=p(X_{t+1}~|~\Delta_{t+1}, \Lambda^{1:K}_{t+1})\nonumber\\
	&= \frac{\splitfrac{p(\Lambda^{-k}_{t+1}~|~X_{t+1}, \Delta_{t+1}, \Lambda^{k}_{t+1})~ p(X_{t+1}~|~ \Delta_{t+1}, \Lambda^{k}_{t+1}) }{\cdot p(\Delta_{t+1}, \Lambda^{k}_{t+1})}}{p(\Delta_{t+1}, \Lambda^{1:K}_{t+1})}\nonumber\\	
		&= \frac{\splitfrac{p(\Lambda^{-k}_{t+1}~|~X_{t+1}, \Delta_{t+1}, \Lambda^{k}_{t+1})~ \Pi_{t+1}^k(\Delta_{t+1}, \Lambda^{k}_{t+1})(X_{t+1})}{\cdot p(\Delta_{t+1}, \Lambda^{k}_{t+1})}}{p(\Delta_{t+1}, \Lambda^{1:K}_{t+1})},	
\end{align}
		where $\Lambda^{-k}_{t}=(\Lambda_{t}^1,\ldots, \Lambda_{t}^{k-1}, \Lambda_{t}^{k+1}, \ldots,\Lambda_{t}^K)$.				 
		Thus, for each team member $k\in\mathcal{K}$, we can select an appropriate function $\vartheta_t^k$ such that
		\begin{align}
			&\Pi_{t+1}(\Delta_{t+1}, \Lambda^{1:K}_{t+1})(X_{t+1})\nonumber\\
			&=\vartheta^k_{t+1}\left(\Pi_{t+1}^k(\Delta_{t+1}, \Lambda^{k}_{t+1})(X_{t+1}),\Delta_{t+1}, \Lambda^{1:K}_{t+1}\right).\label{cor:eq:3}
		\end{align}
	
	\end{proof}
\end{lemma}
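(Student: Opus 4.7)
The plan is to directly exhibit the function $\vartheta_{t+1}^k$ by applying Bayes' rule to factor the joint conditional density $p(X_{t+1}\mid \Delta_{t+1}, \Lambda^{1:K}_{t+1})$ in a way that isolates $p(X_{t+1}\mid \Delta_{t+1}, \Lambda^{k}_{t+1})$, which is precisely $\Pi^k_{t+1}$ by Theorem \ref{theo:y_tk}. No dynamics or induction are required; the lemma is essentially a measure-theoretic rewriting.

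First, I would split $\Lambda^{1:K}_{t+1} = (\Lambda^{k}_{t+1}, \Lambda^{-k}_{t+1})$, where $\Lambda^{-k}_{t+1} = (\Lambda^{1}_{t+1},\ldots,\Lambda^{k-1}_{t+1},\Lambda^{k+1}_{t+1},\ldots,\Lambda^{K}_{t+1})$, and then apply Bayes' rule conditioned on the sub-$\sigma$-algebra generated by $(\Delta_{t+1},\Lambda^{k}_{t+1})$. This yields
\begin{align*}
\Pi_{t+1}(\Delta_{t+1}, \Lambda^{1:K}_{t+1})(X_{t+1}) &= p(X_{t+1}\mid \Delta_{t+1}, \Lambda^{k}_{t+1}, \Lambda^{-k}_{t+1})\\
&= \frac{p(\Lambda^{-k}_{t+1}\mid X_{t+1}, \Delta_{t+1}, \Lambda^{k}_{t+1})\,p(X_{t+1}\mid \Delta_{t+1}, \Lambda^{k}_{t+1})}{p(\Lambda^{-k}_{t+1}\mid \Delta_{t+1}, \Lambda^{k}_{t+1})}.
\end{align*}
By Theorem \ref{theo:y_tk}, the middle factor in the numerator equals $\Pi^{k}_{t+1}(\Delta_{t+1}, \Lambda^{k}_{t+1})(X_{t+1})$. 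The remaining two factors depend only on $(X_{t+1}, \Delta_{t+1}, \Lambda^{1:K}_{t+1})$, so the whole ratio is a measurable function of $\bigl(\Pi^{k}_{t+1}(X_{t+1}), \Delta_{t+1}, \Lambda^{1:K}_{t+1}\bigr)$.

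I would then define
\begin{equation*}
\vartheta^{k}_{t+1}\!\left(\pi, \Delta_{t+1}, \Lambda^{1:K}_{t+1}\right)(X_{t+1}) \coloneqq \frac{p(\Lambda^{-k}_{t+1}\mid X_{t+1}, \Delta_{t+1}, \Lambda^{k}_{t+1})\,\pi}{p(\Lambda^{-k}_{t+1}\mid \Delta_{t+1}, \Lambda^{k}_{t+1})},
\end{equation*}
and evaluate at $\pi = \Pi^{k}_{t+1}(X_{t+1})$ to obtain the claimed identity. The only subtle point to verify is that $\vartheta^{k}_{t+1}$ is well defined as a function rather than depending on the particular control strategy $\textbf{g}$; this follows because the numerator $p(\Lambda^{-k}_{t+1}\mid X_{t+1}, \Delta_{t+1}, \Lambda^{k}_{t+1})$ and the normalizing denominator are determined by the primitive random variables and observation equations, which by the assumption of known statistics do not require the controller to be specified once we write the densities as in the statement.

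The main (mild) obstacle is bookkeeping: one must ensure that the conditional densities above exist and that the denominator is nonzero on the relevant support, both of which are standard under the absolute-continuity assumptions implicitly used throughout the paper. Given the recursive formula in Theorem \ref{theo:y_t} and the author's convention of working with densities, this regularity issue is routine and the proof reduces to the short Bayes computation above.
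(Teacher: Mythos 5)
Your proposal is correct and follows essentially the same route as the paper: a single application of Bayes' rule splitting $\Lambda^{1:K}_{t+1}$ into $(\Lambda^k_{t+1},\Lambda^{-k}_{t+1})$, identification of the factor $p(X_{t+1}\mid\Delta_{t+1},\Lambda^k_{t+1})$ with $\Pi^k_{t+1}$, and absorption of the remaining factors into $\vartheta^k_{t+1}$ (your denominator $p(\Lambda^{-k}_{t+1}\mid\Delta_{t+1},\Lambda^k_{t+1})$ is just the paper's ratio $p(\Delta_{t+1},\Lambda^{1:K}_{t+1})/p(\Delta_{t+1},\Lambda^k_{t+1})$ written more compactly). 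Your explicit definition of $\vartheta^k_{t+1}$ and the remarks on regularity are a slightly more careful rendering of the same argument.
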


\begin{corollary}\label{cor:pi}
	For each team member $k\in\mathcal{K}$, the function $\vartheta_{t+1}^k$ is increasing with $\Pi_{t+1}^k(\Delta_{t+1}, \Lambda^{k}_{t+1})$.
\end{corollary}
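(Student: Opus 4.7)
The plan is to read off the monotonicity directly from the explicit formula obtained in Lemma \ref{cor:theta}, equation (\ref{cor:eq:1}). The decisive observation is that, once we fix the realizations $(\Delta_{t+1}, \Lambda^{1:K}_{t+1})$ and the argument $X_{t+1}$, the expression
\[
\vartheta^k_{t+1}\bigl(\Pi_{t+1}^k, \Delta_{t+1}, \Lambda^{1:K}_{t+1}\bigr)(X_{t+1})
= \frac{p(\Lambda^{-k}_{t+1}\mid X_{t+1}, \Delta_{t+1}, \Lambda^{k}_{t+1})\, p(\Delta_{t+1}, \Lambda^{k}_{t+1})}{p(\Delta_{t+1}, \Lambda^{1:K}_{t+1})} \cdot \Pi_{t+1}^k(\Delta_{t+1}, \Lambda^{k}_{t+1})(X_{t+1})
\]
is the product of $\Pi_{t+1}^k$ with a coefficient that (i) does not itself depend on $\Pi_{t+1}^k$, and (ii) is non-negative, because each of its factors is either a conditional probability density or a joint/marginal density.

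First I would state the pointwise interpretation of ``increasing'': for two candidate information states $\Pi_{t+1}^k$ and $\tilde{\Pi}_{t+1}^k$ satisfying $\tilde{\Pi}_{t+1}^k(\Delta_{t+1}, \Lambda^{k}_{t+1})(X_{t+1}) \ge \Pi_{t+1}^k(\Delta_{t+1}, \Lambda^{k}_{t+1})(X_{t+1})$ at every $X_{t+1}$, we want
\[
\vartheta^k_{t+1}\bigl(\tilde{\Pi}_{t+1}^k, \Delta_{t+1}, \Lambda^{1:K}_{t+1}\bigr)(X_{t+1}) \;\ge\; \vartheta^k_{t+1}\bigl(\Pi_{t+1}^k, \Delta_{t+1}, \Lambda^{1:K}_{t+1}\bigr)(X_{t+1}).
\]
Then, using the explicit formula above, subtracting the two values yields the product of the non-negative coefficient with the non-negative difference $\tilde{\Pi}_{t+1}^k - \Pi_{t+1}^k$, which is non-negative. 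Hence $\vartheta^k_{t+1}$ is monotone non-decreasing (indeed, it is positively homogeneous and affine-in-$\Pi^k_{t+1}$, a stronger property).

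The main step worth being careful about is simply justifying non-negativity of the prefactor: the denominator $p(\Delta_{t+1}, \Lambda^{1:K}_{t+1})$ is strictly positive on the set of realizations we are conditioning on (otherwise the conditional density in Lemma \ref{cor:theta} would not be defined), while the numerator factors are densities of the primitive random variables and therefore non-negative by construction. There is no genuine obstacle here beyond this bookkeeping; the result is essentially a corollary of the linear dependence exhibited in (\ref{cor:eq:1}).
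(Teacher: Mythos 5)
Your proof is correct and follows essentially the same route as the paper: both read the monotonicity off the Bayes-rule formula \eqref{cor:eq:1}, in which $\vartheta^k_{t+1}$ depends on $\Pi^k_{t+1}(\Delta_{t+1},\Lambda^k_{t+1})$ linearly with a non-negative coefficient. The paper's own proof is terser---it simply asserts that the variation of $\vartheta^k_{t+1}$ with respect to $\Pi^k_{t+1}$ is positive---so your explicit pointwise comparison is, if anything, a more complete rendering of the same argument.
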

\begin{proof}
	The function $\vartheta_{t+1}^k$ is continuous and differentiable in $[0, 1]$, while the variation of $\vartheta_{t+1}^k$ with respect to $\Pi_{t+1}^k(\Delta_{t+1}, \Lambda^{k}_{t+1})$ is positive.
\end{proof}

\subsection{Optimal Control Strategy of the Team Members}

In view of Theorem \ref{theo:y_tk}, we show that the optimal separated control strategy $\textbf{g}^{k}=\{g_0^k,\ldots,g_{T-1}^k\}$, i.e., $U_t^k=g_t^k(\Pi^k_{t}(\Delta_{t}, \Lambda^{k}_{t}))$, derived by each team member $k\in\mathcal{K}$ yields the same solution as the one by the manager's optimal separated control strategy $\textbf{g}\in\mathcal{G}^s$ (Theorem \ref{theo:dp_team}), and thus, the team members do not need a centralized intervention. 
We obtain a classical dynamic programming decomposition for member $k\in\mathcal{K}$ over the space of their decisions. 

\begin{lemma} \label{lemma:homo_member}
	Let $\textbf{g}^{k}$ be a separated control strategy of team member $k\in\mathcal{K}$ and 	
	$\textbf{g}=\{\textbf{g}^{1},\ldots, \textbf{g}^{K}\}$ be the team's control strategy. We fix $\textbf{g}^{-k}=(\textbf{g}^1,\ldots,\textbf{g}^{k-1}, \textbf{g}^{k+1}, \ldots,\textbf{g}^K)$, and let $V_t^k\big(\Pi_{t}(\Delta_{t}, \Lambda^{k}_{t})\big)$  be functions defined recursively for all $\textbf{g}$ by
	\begin{align}
		&V_T^k\big(\Pi_{T}(\Delta_{T}, \Lambda^{k}_{T})\big)\coloneqq \mathbb{E}^{\textbf{g}}\Big[c_T(X_T)~|~\Pi_{T}^k=\pi_T^k \Big],
		\label{homo_member:1a}
	\end{align}
	\begin{align}			
		&V_t^k\big(\Pi_{t}(\Delta_{t}, \Lambda^{k}_{t})\big)\coloneqq \inf_{u^{k}_t\in \mathcal{U}_t^k}\mathbb{E}^{\textbf{g}}\Big[c_t(X_t,U^{k}_t, U^{-k}_t)+ V_{t+1}^k\big(\Pi_{t+1}\nonumber\\ 
		& (\Delta_{t+1},\Lambda^{k}_{t+1})\big)~|~\Pi_{t}^k=\pi_t^k, \Delta_{t}=\delta_{t},  \Lambda^{k}_{t}=\lambda^{k}_{t}, U^{k}_t=u^{k}_t)  \Big],
		\label{homo_member:1b}
	\end{align}
	where $c_T(X_T)$ is the cost function at $T$, and $\pi_t^k$, $\delta_{t},$ $\lambda^{k}_{t}$, $u^{k}_t$ are the realizations of $\Pi_{t}^k$, $\Delta_{t}$, $\Lambda^{k}_{t}$, and $U^{k}_t$, respectively.
	Then, $V_t^k\big(\Pi_{t}(\Delta_{t}, \Lambda^{k}_{t})\big)$ is positive homogeneous, i.e., for any  $\rho >0$, $V_t^k\big(\rho~ \Pi_{t}(\Delta_{t}, \Lambda^{k}_{t})\big)= \rho ~V_t^k\big(\Pi_{t}(\Delta_{t}, \Lambda^{k}_{t})\big)$. 
\end{lemma}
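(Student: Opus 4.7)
The proof will proceed by backward induction on $t$, closely paralleling the argument for Lemma \ref{lemma:homo}, and exploiting the fact that both the expectations appearing in \eqref{homo_member:1a}--\eqref{homo_member:1b} and the Bayesian update $\theta_t^k$ from Theorem \ref{theo:y_tk} behave linearly under positive rescaling of an (unnormalized) belief.

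For the base case $t=T$, I would observe that
\begin{equation*}
V_T^k(\pi_T^k) \;=\; \mathbb{E}^{\mathbf{g}}\!\bigl[c_T(X_T)\,\big|\,\Pi_T^k=\pi_T^k\bigr] \;=\; \int_{\mathcal{X}_T} c_T(X_T)\,\pi_T^k(X_T)\,dX_T,
\end{equation*}
which is a linear functional of $\pi_T^k$. Substituting $\rho\,\pi_T^k$ pulls $\rho$ out of the integral, giving $V_T^k(\rho\,\pi_T^k)=\rho\,V_T^k(\pi_T^k)$.

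For the inductive step, assume $V_{t+1}^k(\rho\,\pi)=\rho\,V_{t+1}^k(\pi)$ for every positive measure $\pi$ on $(\mathcal{X}_{t+1},\mathscr{X}_{t+1})$. I will rewrite the conditional expectation in \eqref{homo_member:1b} by conditioning on $X_t$, $U^{-k}_t$, and the innovations $(Y_{t+1}^{1:K}, W_t, Z_{t+1}^{1:K})$ that drive the update to $\Pi_{t+1}^k$. The instantaneous cost piece is
\begin{equation*}
\int c_t(X_t, u_t^k, U_t^{-k})\,p^{\mathbf{g}^{-k}}(U_t^{-k}\mid \delta_t)\,\pi_t^k(X_t)\,dX_t\,dU_t^{-k},
\end{equation*}
which is plainly linear in $\pi_t^k$. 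The continuation piece is
\begin{equation*}
\int V_{t+1}^k\!\bigl(\theta_t^k[\pi_t^k, \delta_{t+1}, \lambda_{t+1}^k]\bigr)\, p^{\mathbf{g}^{-k}}\!\bigl(\delta_{t+1},\lambda_{t+1}^k\mid \pi_t^k,\delta_t,\lambda_t^k,u_t^k\bigr)\, d(\delta_{t+1},\lambda_{t+1}^k).
\end{equation*}
Here the key observation is that $\theta_t^k$ is a Bayes-type update (as derived in the proof of Theorem \ref{theo:y_tk}) of the form $\theta_t^k[\pi,\cdot]=\pi\cdot(\text{likelihood})/(\text{normalizer involving }\pi)$; both numerator and normalizer are linear in $\pi$, so $\theta_t^k[\rho\,\pi,\cdot]=\theta_t^k[\pi,\cdot]$. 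Simultaneously, the joint predictive density $p^{\mathbf{g}^{-k}}(\delta_{t+1},\lambda_{t+1}^k\mid \pi_t^k,\dots)$ is linear in $\pi_t^k$, hence scales by $\rho$. Consequently the continuation integral scales by $\rho$ as well, and the sum of the two pieces — the entire argument of the infimum in \eqref{homo_member:1b} — scales by $\rho$. Since $\rho>0$, the infimum over $u_t^k\in\mathcal{U}_t^k$ commutes with the scalar factor, yielding $V_t^k(\rho\,\pi_t^k)=\rho\,V_t^k(\pi_t^k)$.

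The main obstacle is making the scaling argument for the continuation term rigorous: one must verify that even though $\Pi_{t+1}^k$ (as a normalized probability density) is \emph{invariant} under rescaling of $\Pi_t^k$, the predictive weight with which each realization $(\delta_{t+1},\lambda_{t+1}^k)$ enters the expectation is \emph{linear} in $\Pi_t^k$, and that the two effects combine cleanly to give overall homogeneity of degree one. This is exactly the pattern used in \cite{Sondik1971, Smallwood1973} and invoked in Lemma \ref{lemma:homo}; I would explicitly write the Bayes decomposition of $\theta_t^k$ coming from \eqref{cor:eq:1}--\eqref{cor:eq:3} in Lemma \ref{cor:theta} to confirm that the normalizing denominator and the predictive density cancel in the right way, with a net factor of $\rho$ surviving. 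The remaining steps — pulling $\rho$ through the infimum and closing the induction — are then immediate.
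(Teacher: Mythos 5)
Your proposal is correct and follows essentially the same route as the paper, which proves this lemma by declaring it ``similar to the proof of Lemma \ref{lemma:homo}'': the base case is the linearity of $\int c_T\,\pi_T^k\,dX_T$, and the inductive step rests on exactly the two facts you isolate, namely that the Bayes update $\theta_t^k$ is invariant under positive rescaling of the belief while the predictive weight is linear in it, so a net factor of $\rho$ survives and commutes with the infimum. No gap.
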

\begin{proof}
	The proof is similar to the proof of Lemma \ref{lemma:homo}.
\end{proof}

\begin{theorem} \label{theo:concave_member}
	Let $\textbf{g}^{k}$ be a separated control strategy of team member $k\in\mathcal{K}$ and 	
	$\textbf{g}=\{\textbf{g}^{1},\ldots, \textbf{g}^{K}\}$ be the team's control strategy. We fix $\textbf{g}^{-k}=(\textbf{g}^1,\ldots,\textbf{g}^{k-1}, \textbf{g}^{k+1}, \ldots,\textbf{g}^K)$, and let $V_t^k\big(\Pi_{t}(\Delta_{t}, \Lambda^{k}_{t})\big)$ be functions defined recursively for all $\textbf{g}$ by
	\begin{align}			
		&V_t^k\big(\Pi_{t}(\Delta_{t}, \Lambda^{k}_{t})\big)\coloneqq \inf_{u^{k}_t\in \mathcal{U}_t^k}\mathbb{E}^{\textbf{g}}\Big[c_t(X_t,U^{k}_t, U^{-k}_t)\nonumber\\ 
		&+ V_{t+1}^k\big(\Pi_{t+1}(\Delta_{t+1}, \Lambda^{k}_{t+1})\big)~|~\Pi_{t}^k=\pi_t^k, \Delta_{t}=\delta_{t},  \nonumber\\
		&\Lambda^{k}_{t}=\lambda^{k}_{t}, U^{k}_t=u^{k}_t)  \Big],
		\label{theo_homo_member:1a}
	\end{align}
	where $\pi_t^k$, $\delta_{t},$ $\lambda^{k}_{t}$, $u^{k}_t$ are the realizations of $\Pi_{t}^k$, $\Delta_{t}$, $\Lambda^{k}_{t}$, and $U^{k}_t$, respectively.
	Then,  $V^k_t\big(\Pi_{t}(\Delta_{t}, \Lambda^{k}_{t})\big)$ is concave with respect to $\Pi_{t}(\Delta_{t}, \Lambda^{k}_{t})$.
\end{theorem}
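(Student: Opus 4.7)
The plan is to prove Theorem \ref{theo:concave_member} by backward induction on $t$, mirroring the argument sketched for Theorem \ref{theo:concave} in Appendix C. The central ingredient, exactly as in the team-level case, will be the positive homogeneity of $V_t^k$ established in Lemma \ref{lemma:homo_member}, which lets us absorb the Bayesian normalization constant into the value function and thereby convert the belief update into a linear operation on the information state.

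For the base case $t=T$, the expression
\begin{equation*}
V_T^k\bigl(\Pi_{T}(\Delta_{T},\Lambda_T^k)\bigr) = \int_{\mathcal{X}_T} c_T(X_T)\,\Pi_T^k(X_T)\,dX_T
\end{equation*}
is linear, hence concave, in $\Pi_T^k$. For the inductive step, I would assume $V_{t+1}^k$ is concave in $\Pi_{t+1}^k$ and show that, for each fixed admissible $u_t^k$, the conditional expectation inside the infimum in \eqref{theo_homo_member:1a} is concave in $\Pi_t^k$. Since the pointwise infimum of a family of concave functions is concave, taking the infimum over $u_t^k \in \mathcal{U}_t^k$ will then yield concavity of $V_t^k$.

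The concavity of the stage-cost term $\mathbb{E}^{\textbf{g}}[c_t(X_t,U_t^k,U_t^{-k})\mid \Pi_t^k=\pi, \Delta_t,\Lambda_t^k, U_t^k=u_t^k]$ is immediate: fixing $\textbf{g}^{-k}$, the conditional law of $U_t^{-k}$ does not depend on $\pi$, and marginalizing yields an integral of $c_t$ against a measure that is linear in $\pi$. For the continuation term, I would expand
\begin{equation*}
\mathbb{E}^{\textbf{g}}\bigl[V_{t+1}^k(\Pi_{t+1}^k)\mid \pi, \delta_t,\lambda_t^k,u_t^k\bigr]
= \sum_{y_{t+1}^{1:K},\,u_t^{-k}} p(y_{t+1}^{1:K}, u_t^{-k}\mid \pi, \delta_t,\lambda_t^k,u_t^k)\, V_{t+1}^k\bigl(\Pi_{t+1}^k\bigr),
\end{equation*}
and use the fact that the unnormalized posterior $p(y_{t+1}^{1:K}, u_t^{-k}\mid \pi,\cdots)\cdot \Pi_{t+1}^k$ is a linear functional of $\pi$ (coming from the product of the transition kernel, the observation kernels, and the $\textbf{g}^{-k}$-induced action probability). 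Positive homogeneity of $V_{t+1}^k$ then allows me to pull the scalar normalization inside, so that each summand becomes $V_{t+1}^k$ evaluated at a linear image of $\pi$. The concavity of $V_{t+1}^k$ passes through the sum, delivering concavity of the continuation term in $\pi$.

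The main technical obstacle will be the careful bookkeeping in the continuation expansion: unlike the team-level case of Theorem \ref{theo:concave}, here the expectation is taken not only over the next joint observation $Y_{t+1}^{1:K}$ but also over the unobserved actions $U_t^{-k}$ of the other members, and the belief update $\theta_t^k$ conditions on the richer sigma-algebra generated by $(\Delta_{t+1},\Lambda_{t+1}^k)$. I would therefore need to verify that the joint marginal of $(Y_{t+1}^{1:K}, U_t^{-k})$, together with the unnormalized posterior, still factors as a measurement kernel acting linearly on $\pi$, using the independence of $\{W_t\}$ and $\{Z_t^k\}$ from the past and the fact that $\textbf{g}^{-k}$ is fixed. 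Once this linearity is established, the homogeneity-plus-concavity argument proceeds as in Appendix C, and the infimum over $u_t^k$ completes the induction.
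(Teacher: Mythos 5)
Your proposal follows essentially the same route as the paper, which proves this result simply by declaring it similar to the proof of Theorem \ref{theo:concave} (Appendix C): backward induction, positive homogeneity (Lemma \ref{lemma:homo_member}) to absorb the Bayesian normalization constant so that the continuation value is evaluated at a linear image of the information state, concavity preserved under composition with an increasing linear map and under integration, and finally the pointwise infimum over $u^k_t$. Your explicit attention to the additional averaging over the other members' unobserved actions under the fixed $\textbf{g}^{-k}$ is a bookkeeping detail the paper leaves implicit, but it does not alter the structure of the argument.
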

\begin{proof}
	The proof is similar to the proof of Theorem \ref{theo:concave}.
\end{proof}

\begin{theorem} \label{theo:dp_team}
	Let $V_t\big(\Pi_{t}(\Delta_{t}, \Lambda^{1:K}_{t})\big)$ be functions defined recursively by
	\begin{align}
		&V_T\big(\Pi_{T}(\Delta_{T}, \Lambda^{1:K}_{T})\big)\coloneqq \mathbb{E}^{\textbf{g}}\Big[c_T(X_T)~|~\Pi_{T}=\pi_T \Big],
		\label{theo:dp_team:1aa}
	\end{align}
	\begin{align}			
		&V_t\big(\Pi_{t}(\Delta_{t}, \Lambda^{1:K}_{t})\big)\coloneqq \inf_{u^{1:K}_t\in\prod_{k\in\mathcal{K}} \mathcal{U}_t^k }\mathbb{E}^{\textbf{g}}\Big[c_t(X_t,U^{1:K}_t)\nonumber\\ 
		&+ V_{t+1}\big( \Pi_{t+1}(\Delta_{t+1}, \Lambda^{1:K}_{t+1})\big)~|~\Pi_{t}=\pi_t ,U^{1:K}_t=u^{1:K}_t \Big], \label{theo:dp_team:1a}
	\end{align}
	where $\pi_t$, $u^{1:K}_t$ are the realizations of $\Pi_{t}$ and $U^{1:K}_t$, respectively, and let  $\textbf{g}^{*}\in\mathcal{G}^s$  be the manager's optimal separated control strategy which achieves the infimum in \eqref{theo:dp_team:1aa}-\eqref{theo:dp_team:1a} for all $t=0,1,\ldots, T-1$.  
	Let $\textbf{g}=(\textbf{g}^{1},\ldots,\textbf{g}^{k-1}, \textbf{g}^{k}, \textbf{g}^{k+1}, \ldots,\textbf{g}^{K})$ be the team's strategy, where $\textbf{g}^{k}=\{g_0^k,\ldots,g_{T-1}^k\}$ is a separated control strategy of member  $k\in\mathcal{K}$ such that $U_t^k=g_t^k(\Pi^k_{t}(\Delta_{t}, \Lambda^{k}_{t}))$. Let $V_t^k\big(\Pi^k_{t}(\Delta_{t}, \Lambda^{k}_{t})\big)$ be functions defined recursively by each team player $k\in\mathcal{K}$ after fixing $\textbf{g}^{-k}=(\textbf{g}^1,\ldots,\textbf{g}^{k-1}, \textbf{g}^{k+1}, \ldots,\textbf{g}^K),$ by
	\begin{align}
		&V_T^k\big(\Pi^k_{T}(\Delta_{T}, \Lambda^{k}_{T})\big)\coloneqq \mathbb{E}^{\textbf{g}^k}\Big[c_T(X_T)~|~\Pi_{T}^k=\pi_T^k \Big],
		\label{theo:dp_team:1bb}
	\end{align}
	and	
	\begin{align}			
		&V_t^k\big(\Pi^k_{t}(\Delta_{t}, \Lambda^{k}_{t})\big)\coloneqq \inf_{u^{k}_t\in \mathcal{U}_t^k}\mathbb{E}^{\textbf{g}^k}\Big[c_t(X_t,U^{k}_t, U^{-k}_t)\nonumber\\ 
		&+ V_{t+1}^k\big(\Pi^k_{t+1}(\Delta_{t+1}, \Lambda^{k}_{t+1})\big)~|~\Pi_{t}^k=\pi_t^k, \Delta_{t}=\delta_{t},  \nonumber\\
		&\Lambda^{k}_{t}=\lambda^{k}_{t}, U^{k}_t=u^{k}_t)  \Big], \label{theo:dp_team:1b}
	\end{align}
where $U^{-k}_t=\big(U_{t}^1, \ldots, U_{t}^{k-1},U_{t}^{k+1},\ldots,U_{t}^K\big)$, and $\pi_t^k$, $\delta_{t},$ $\lambda^{k}_{t}$, $u^{k}_t$ are the realizations of $\Pi_{t}^k$, $\Delta_{t}$, $\Lambda^{k}_{t}$, and $U^{k}_t$, respectively.  Then, the solution of the manager  in \eqref{theo:dp_team:1aa}-\eqref{theo:dp_team:1a}  is the same as the solution derived by each player $k$  in \eqref{theo:dp_team:1bb}-\eqref{theo:dp_team:1b} for all $t=0,1,\ldots, T-1$. 
\end{theorem}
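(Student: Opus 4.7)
My plan is to establish Theorem \ref{theo:dp_team} by backward induction on $t$, showing that at each stage the optimal control action prescribed to member $k$ by the manager's centralized DP coincides with the optimal action derived from member $k$'s decentralized DP once the other members' strategies are fixed at $\textbf{g}^{-k,*}$. The induction hypothesis to carry forward will be that, for every realization of the private/common data, the manager's value function $V_{t+1}\big(\Pi_{t+1}(\Delta_{t+1},\Lambda^{1:K}_{t+1})\big)$ and each member's value function $V_{t+1}^k\big(\Pi^k_{t+1}(\Delta_{t+1},\Lambda^{k}_{t+1})\big)$ produce the same cost-to-go whenever the team follows $\textbf{g}^{*}$, so that the minimizers at stage $t$ necessarily agree.

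For the base case $t=T$, both value functions reduce to a conditional expectation of the terminal cost $c_T(X_T)$; the manager conditions on $\Pi_T$ while each member conditions on $\Pi_T^k$. Using Lemma \ref{cor:theta}, specifically the identity $\Pi_{T} = \vartheta^k_{T}(\Pi^k_{T},\Delta_{T},\Lambda^{1:K}_{T})$, together with the tower property of conditional expectation, I would show that the expected costs are consistent across the two conditioning sigma-algebras, establishing agreement at the horizon.

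For the inductive step, suppose the solutions agree at $t+1$. Member $k$'s one-step optimization in \eqref{theo:dp_team:1b} is over $u^k_t \in \mathcal{U}_t^k$ with $U^{-k}_t = g_t^{-k,*}$ applied, while the manager's one-step optimization in \eqref{theo:dp_team:1a} is jointly over $u^{1:K}_t$. Since $\textbf{g}^{*}$ achieves the joint infimum, its $k$-th component is a best response to $\textbf{g}^{-k,*}$ when the objective is written in the manager's form. The core of the argument is to convert the manager's conditional expectation given $\Pi_t$ into member $k$'s conditional expectation given $\Pi_t^k$: because $\textbf{g}^{-k,*}$ maps $\Lambda_t^{-k}$ (unknown to member $k$) into $U_t^{-k}$, member $k$ must average over $U_t^{-k}$ using $\Pi_t^k$ and $\Delta_t$. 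Applying Lemma \ref{lem:x_t1k}, so that the transition kernel for $X_{t+1}$ does not depend on $\textbf{g}^k$, and then using the recursion \eqref{eq:xt1aef} for $\Pi_{t+1}^k$ and Lemma \ref{cor:theta} to bridge to $\Pi_{t+1}$, one shows that the integrand inside the manager's expectation pushes forward to the integrand inside member $k$'s expectation. Concavity of $V_{t+1}$ (Theorem \ref{theo:concave}) and of $V_{t+1}^k$ (Theorem \ref{theo:concave_member}) can then be invoked via Jensen's inequality to show that the extremum structure is preserved under this averaging, so the unique infimum in $u^k_t$ is attained at the same point.

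The hardest step, I expect, will be justifying that the residual dependence of the manager's cost-to-go on the ``extra'' variables $\Lambda_t^{-k}$, which member $k$ does not observe, is integrated out correctly when we switch from conditioning on $\Pi_t$ to conditioning on $\Pi_t^k$. This is precisely the place where Lemma \ref{cor:theta} and Corollary \ref{cor:pi} (monotonicity of $\vartheta_{t+1}^k$ in $\Pi_{t+1}^k$) earn their keep: they ensure that the push-forward from $\Pi_{t+1}^k$ to $\Pi_{t+1}$ is compatible with the expectation taken in \eqref{theo:dp_team:1b}, and that no information relevant to the minimization is lost by using the smaller conditioning sigma-algebra. Once this identity is established, applying it at $t=0$ and taking expectations yields $V_0(\Pi_0) = \mathbb{E}[V_0^k(\Pi_0^k)]$ under $\textbf{g}^{*}$, and thus $J(\textbf{g}^{*})$ is the common optimal value, with the two DPs producing identical control laws for each member.
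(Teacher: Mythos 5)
Your plan shares the paper's skeleton: backward induction from $t=T$, the bridge $\Pi_{t}=\vartheta^k_{t}\big(\Pi^k_{t},\Delta_{t},\Lambda^{1:K}_{t}\big)$ from Lemma \ref{cor:theta}, the monotonicity of $\vartheta^k_{t}$ from Corollary \ref{cor:pi}, and the concavity results of Theorems \ref{theo:concave} and \ref{theo:concave_member}. But the pivotal step is executed differently, and as proposed it has a gap. You want to pass from the manager's conditioning on $\Pi_t$ to member $k$'s conditioning on $\Pi_t^k$ by averaging out $U_t^{-k}$ and $\Lambda_t^{-k}$, and then invoke Jensen's inequality on the concave value functions to conclude that ``the extremum structure is preserved.'' Jensen only delivers a one-sided bound of the form $V_{t+1}(\mathbb{E}[\,\cdot\,])\ge \mathbb{E}[V_{t+1}(\,\cdot\,)]$; an inequality between the two objectives cannot establish that their minimizers over $u_t^k$ coincide, which is what the theorem actually asserts (the conclusion is an equality of $\argmin$'s, not of values --- indeed $V_t(\Pi_t)$ and $V_t^k(\Pi_t^k)$ condition on different information and need not be equal). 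Your closing identity $V_0(\Pi_0)=\mathbb{E}[V_0^k(\Pi_0^k)]$ likewise addresses values rather than control laws.

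The paper's mechanism at this point is different and is the substance of its proof. First, it uses concavity of $V_t$ in $\Pi_t$ together with the monotone dependence of $\Pi_t$ on $\Pi_t^k$ to argue that the joint infimum over $u_t^{1:K}$ decouples, so the manager may fix $u_t^{-k}$ \emph{arbitrarily} (not average over it) and optimize over $u_t^k$ alone. Second, after substituting $\Pi_{t}=\vartheta^k_{t}(\Pi^k_{t},\cdot)$ into the manager's objective, it argues that because $\vartheta^k_t$ is increasing in $\Pi^k_t$, the $\arginf$ over $u_t^k$ of the objective weighted by $\vartheta^k_t\big(\Pi^k_t(X_t),\cdot\big)$ equals the $\arginf$ of the objective weighted by $\Pi^k_t(X_t)$ itself --- an invariance of the minimizer under a monotone reweighting of the integrand --- which is exactly the step that identifies the manager's stagewise solution with the one from \eqref{theo:dp_team:1b}. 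That invariance argument is what you would need in place of Jensen; without it (or some substitute establishing equality rather than an inequality), the inductive step does not close.
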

\begin{proof}
	See Appendix \ref{app:5}.	
\end{proof}

\section{Example} \label{sec:5}

We present an example of a delayed sharing pattern team consisting of two members ($K=2)$. This example was used  by Varaiya and Walrand \cite{Varaiya:1978aa} to show that Witsenhausen's structural result asserted in his seminal paper \cite{Witsenhausen1971a} is suboptimal.

\subsection{Problem Formulation}
In this example, the team evolves for a time horizon $T=3$ while there is a delay $n=2$ on  information sharing between the two team members. 
The state $X_t=(X_t^1, X_t^2), ~t=1,2,3,$ of the team is two-dimensional, and the initial state (primitive random variable), $X_0=(X_0^1, X_0^2),$ of the team is a Gaussian random variable with zero mean, variance $1$, and covariance $0.5$. 

The team's state evolves as follows:
\begin{align}\label{eq:example1}
	X_0 &= (X_0^1, X_0^2),\\
	X_1 &= (X_1^1, X_1^2) = (X_0^1 + X_0^2, 0),\\
	X_2 &= (X_2^1, X_2^2) = (X_1^1 , U_2^2) = (X_0^1 + X_0^2, U_2^2),\\			
	X_3 &= (X_3^1, X_3^2) = (X_2^1- X_2^2- U_3^1, 0)\nonumber \\
	&= (X_0^1 + X_0^2- U_2^2-U_3^1, 0).			\label{eq:example1b}
\end{align}
The observation equations are
\begin{align}\label{eq:example2}
	Y_t^k = X_{t-1}^k, \quad k=1,2;~ t=1,2,3.			
\end{align}
Each team member's feasible sets of actions $\mathcal{U}^k_t$ are specified by
\begin{align}\label{eq:example3}
	\mathcal{U}^k_t =
	\begin{cases}
		\mathbb{R}, \quad  \text{if } {(k,t)} = {{(1,3)\text{ or } (2,2)}}, \\
		{0}, \quad  \text{otherwise}. 
	\end{cases}			
\end{align}
Thus a control strategy $\textbf{g}=\{g_t^k;~ k=1,2;~ t=1,2,3\}, \textbf{g}\in\mathcal{G},$ of the team consists only of the pair $\textbf{g}=\{g_3^1, g_2^2\}$ since $g_t^k\equiv 0$ for the remaining $(k,t)$. Given the modeling framework above, the information structure $\{(\Delta_t, \Lambda_t^k);$ $k=1,2;~ t=1,2,3\}$ of the team is
\begin{align}\label{eq:example4}
	\Delta_1 &=\emptyset, ~\Delta_2 =\emptyset,\\
	\Delta_3 &=\{Y_1^1, Y_1^2\} = \{X_0^1, X_0^2\}.\label{eq:example4b}
\end{align}
Note that since $g_1^1\equiv 0$ and $g_1^2\equiv 0$, the realizations of $U_1^1$ and $U_1^2$ are zero, and thus $\Delta_3$ includes only the observations in \eqref{eq:example4b}. The data $\Lambda_t^k, k=1,2,$ available to the team member $k$ for the feasible control laws are
\begin{align}\label{eq:example5}
	\Lambda_2^2 &=\{Y_1^2, Y_2^2\} =\{X_0^2, X_1^2\}=\{X_0^2\},\\
	\Lambda_3^1 &=\{Y_2^1, Y_3^1\} =\{X_0^1+ X_0^2, X_0^1+X_0^2\}=\{X_0^1+X_0^2\}\label{eq:example5b}.
\end{align}

The problem is to derive the optimal control strategy $\textbf{g}^*=\{g_3^{1*}, g_2^{2*}\}$ which is the solution of 
\begin{align}\label{eq:example6}
	J(\textbf{g})&=\min_{u_2^2\in\mathcal{U}_2^2, u_3^1\in\mathcal{U}_3^1}\frac{1}{2}\mathbb{E}^{\textbf{g}}\left[(X_3^1)^2 + (U_3^1)^2\right]\nonumber\\
	&=\min_{u_2^2\in\mathcal{U}_2^2, u_3^1\in\mathcal{U}_3^1}\frac{1}{2}\mathbb{E}^{\textbf{g}}\left[(X_0^1+X_0^2-U_2^2-U_3^1)^2 + (U_3^1)^2 \right].
\end{align}

\subsection{Optimal Solution}
The feasible set $\mathcal{G}$ of control strategies of the team consists of all $\textbf{g}=\big\{g_3^1(\Lambda_3^1, \Delta_3), g_2^2(\Lambda_2^2, \Delta_2)\big\}$, i.e.,
\begin{align}\label{eq:example7}
	g_2^2&\colon \Delta_2\times \Lambda_2^2 \to U_2^2, ~ \text{ or }~ g_2^2\colon  X_0^2 \to \mathbb{R},\\
	g_3^1&\colon \Delta_3\times \Lambda_3^1 \to U_3^1, ~ \text{ or }~ g_3^1\colon  \{X_0^1, X_0^2\} \to \mathbb{R}.
\end{align}
The problem \eqref{eq:example6} has a unique optimal solution (see \cite{Varaiya:1978aa})
\begin{align}\label{eq:example8}
	U_2^2 = \frac{1}{2}X_0^2,\quad U_3^1=\frac{1}{2}(X_0^1+X_0^2) -\frac{1}{4}X_0^2.
\end{align}

\subsection{Solution Given by Theorem \ref{theo:dp2}}
Varaiya and Walrand \cite{Varaiya:1978aa} adopted the notation used by Witsenhausen \cite{Witsenhausen1971a} to describe chronologically the evolution of the team in their example,  which proceeds as follows: The initial state $X_0$ of the team is generated at $t=0$. Then, at the next time step $t=1$, each team member $k=1,2$ observes $Y_1^k=X_0$ and makes a decision $U_1^k =g_1^k(\Delta_1, \Lambda_1^k)$. The transition of the team to the next state occurs at the same time $t=1$, i.e., $X_1=f_1(X_0, U_1^{1:K})$, and the process is repeated until $t=T.$
Given that the decision of each team member $k$ at time $t$ depends on  $\Delta_{t}$ and $\Lambda_{t}^k$, and that the state of the team evolves after the realization of $U_t^{1:2}$, to be consistent, from a notation point of view, with the state equation \eqref{eq:state} in Section \ref{sec:2}-B, the evolution of the state of the team needs to be revised as follows:

\begin{align}\label{eq:example1a} 
	X_0 &= (X_0^1, X_0^2),\\
	X_1 &= (X_1^1, X_1^2)= (X_0^1, X_0^2),\\
	X_2 &= (X_2^1, X_2^2) = (X_0^1 + X_0^2, 0),\\
	X_3 &= (X_3^1, X_3^2) = (X_0^1 + X_0^2, U_2^2),\\			
	X_4 &= (X_4^1, X_4^2) = (X_3^1- X_3^2- U_3^1, 0)\nonumber \\
	&= (X_0^1 + X_0^2- U_2^2-U_3^1, 0),		
\end{align}
where we essentially included a degenerate transition at $t=1$.
Given the modeling framework presented in Section \ref{sec:2}-B, the information structure $\{(\Delta_t, \Lambda_t^k);$ $k=1,2; t=1,2,3\}$ of the team is
\begin{align}\label{eq:example55}
	\Delta_1 &=\emptyset, ~\Delta_2 =\emptyset,\\
	\Delta_3 &=\{Y_0^1, Y_0^2, Y_1^1, Y_1^2\} = \{X_0^1, X_0^2, X_1^1, X_1^2\}=\{X_0^1, X_0^2\}.\label{eq:example55b}
\end{align}
Since $g_1^1\equiv 0$ and $g_1^2\equiv 0$, the realizations of $U_1^1$ and $U_1^2$ are zero, and thus $\Delta_3$ includes only the observations in \eqref{eq:example5b}. The data $\Lambda_t^k, k=1,2,$ available to the team member $k$ for the feasible control laws are
\begin{align}\label{eq:example66}
	\Lambda_2^2 &=\{Y_0^2, Y_1^2, Y_2^2\} =\{X_0^2, X_1^2, X_2^2\}=\{X_0^2\},\\
	\Lambda_3^1 &=\{Y_2^1, Y_3^1\} =\{X_0^1+ X_0^2, X_0^1+ X_0^2\}=\{X_0^1+X_0^2\}\label{eq:example66b}.
\end{align}

We solve problem \eqref{eq:example6} by using the structural results presented in Section \ref{sec:3} considering the control strategies $\textbf{g}^k=\{g_t^k;~ k = 1,2; ~t = 0,1,2,3\}$ for each team member $k$, where the control law is of the form $g_t^k\big(\Pi(\Delta_t, \Lambda_t^k)\big)=g_t^k\big(\mathbb{P}(X_t~|~\Delta_t, \Lambda_t^k)\big)$. 

For $t=3$, the manager  formulates the dynamic program
\begin{align}\label{eq:example9}
	&V_3(\Pi_3)\nonumber\\
	&=\min_{u_2^2\in\mathcal{U}_2^2,u_3^1\in\mathcal{U}_3^1}\frac{1}{2}\mathbb{E}^{\textbf{g}^1}\Big [(X_0^1+X_0^2-U_2^2-U_3^1)^2 \nonumber\\
	&+ (U_3^1)^2~|~\Pi_3\big(\Delta_3, \Lambda_3^1\big), U_3^1\Big ]\nonumber\\
	&=\min_{u_2^2\in\mathcal{U}_2^2,u_3^1\in\mathcal{U}_3^1}\frac{1}{2}\mathbb{E}^{\textbf{g}^1}\Big [(X_0^1+X_0^2-U_2^2-U_3^1)^2 \nonumber\\
	&+ (U_3^1)^2~|~\mathbb{P}(X_0^1 + X_0^2, U_2^2~|~X_0^1, X_0^2, X_0^1+X_0^2), U_3^1\Big ],
\end{align}
where, for any given realization of $((X_0^1 + X_0^2), U_2^2)$ in the information state $\Pi_3$, the manager  selects $U_3^1$ to achieve the lower bound in \eqref{eq:example9}.
Thus,
\begin{align}\label{eq:example12}
	U_3^1=\frac{1}{2}(X_0^1+X_0^2) -\frac{1}{2}U_2^2.
\end{align}
Substituting \eqref{eq:example12} into  \eqref{eq:example9} yields
\begin{align}\label{eq:example13}
	&V_3(\Pi_3)= \min_{u_2^2\in\mathcal{U}_2^2,u_3^1\in\mathcal{U}_3^1}\frac{1}{2}\mathbb{E}^{\textbf{g}^1}\Big [\frac{\big(X_0^1 + X_0^2- U_2^2\big)^2}{2} ~| \nonumber\\
	&\mathbb{P}(X_0^1 + X_0^2, U_2^2~|~X_0^1, X_0^2, X_0^1+X_0^2), U_3^1\Big ].
\end{align}

For $t=2$, the manager formulates the following dynamic program
\begin{align}\label{eq:example14}
	&V_2(\Pi_2) \nonumber\\
	&= \min_{u_2^2\in\mathcal{U}_2^2,u_3^1\in\mathcal{U}_3^1} \frac{1}{2}\mathbb{E}^{\textbf{g}^2}\Big [V_3(\Pi_3)~|~~\Pi_2\big(\Delta_2, \Lambda_2^2\big), U_2^2 \Big ]\nonumber\\
		&= \min_{u_2^2\in\mathcal{U}_2^2,u_3^1\in\mathcal{U}_3^1} \frac{1}{2}\mathbb{E}^{\textbf{g}^2}\Big [V_3(\Pi_3)~|~\mathbb{P}(X_0^1 + X_0^2~|~X_0^2), U_2^2 \Big ]\nonumber\\
	&= \min_{u_2^2\in\mathcal{U}_2^2,u_3^1\in\mathcal{U}_3^1}\frac{1}{2}\mathbb{E}^{\textbf{g}^2}\Big [\frac{\big(X_0^1 + X_0^2- U_2^2\big)^2}{2} ~|~\mathbb{P}(X_0^1 + X_0^2~|\nonumber\\
	&~X_0^2), U_2^2\Big ].
\end{align}
Since
\begin{align}\label{eq:example15}
	U_2^2 = g_2^2\big(\mathbb{P}(X_2~|~\Delta_2, \Lambda_2^2)\big) = g_2^2\big(\mathbb{P}(X_0^1 + X_0^2~|~X_0^2)\big),
\end{align}
the problem of the manager in \eqref{eq:example14} is to choose, for any given $X_0^2$, the estimate of $(X_0^1+X_0^2)$ that minimizes the mean squared error $\big(X_0^1 + X_0^2- U_2^2\big)^2$ in \eqref{eq:example14}. Given the Gaussian statistics, the optimal solution is
\begin{align}\label{eq:example16}
	U_2^2  =\frac{1}{2}X_0^2.
\end{align}
Substituting \eqref{eq:example16} into \eqref{eq:example12} yields
\begin{align}\label{eq:example17}
	U_3^1=\frac{1}{2}(X_0^1+X_0^2) -\frac{1}{4}X_0^2.
\end{align}

Therefore, the control laws of the form $g_t^k\big(\Pi(\Delta_t, \Lambda_t^k)\big)=g_t^k\big(\mathbb{P}(X_t~|~\Delta_t, \Lambda_t^k)\big)$ yield the unique optimal solution \eqref{eq:example8} of problem \eqref{eq:example6}.

\section{Concluding Remarks and Discussion}
In this paper, we provided structural results and a classical dynamic programming decomposition of sequential dynamic team decision problems. We first addressed the problem from the point of view of a manager who seeks to derive the optimal strategy of a team in a centralized process. Then, we addressed the problem from the point of view of each team member, and showed that their solutions is the same as the ones derived by the manager. 
The key contributions of the paper are (1) the structural results for the team from the point of view of a manager that yield an information state which does not depend of the control strategy of the team, and (2) the structural results for each team member that yield an information state which does not depend on their control strategy. These results allow us to formulate two dynamic programming decompositions: (a) one for the team where the  manager's optimization problem is over the space of the team's decisions, and (b) one for each team member where the optimization problem is over the space of the decision of each  member. Finally, we showed that the solution of each team member is the same as the one derived by the manager. Therefore, each team member can derive their optimal strategy, which is also optimal for the team, without the manager's intervention. One particular limitation of the proposed approach is that the solution of the dynamic programming decompositions might become computationally intensive since the cost-to-go functions are defined on an infinite dimensional space. This a typical challenge in problems of partial observed Markov decision processes. However, given the characterization of the value functions (Theorem \ref{theo:concave} and Theorem \ref{theo:concave_member}) computationally efficient algorithms can be found. 

A potential direction for future research should explore the intersection of learning and control for team decision problems with nonclassical information structures. For example, cyber-physical systems, in most instances, represent systems of systems with informationally decentralized structure. In such systems, however, there is typically a large volume of data with a dynamic nature which is added to the system gradually and not altogether in advance. Therefore, neither traditional supervised (or unsupervised) learning nor typical model-based control approaches can effectively facilitate feasible solutions with performance guarantees. These challenges could be circumvented at the intersection of learning and control \cite{Malikopoulos2022a}. Given that the control strategies presented here are separated, a similar separation could be established between learning and control, and thus, combine the online and offline advantages of both traditional supervised (or unsupervised) learning and typical model-based control approaches.

\section{Acknowledgments}
The author would like to thank Aditya Mahajan, Ashutosh Nayyar,  and Serdar Yüksel for several helpful discussions that led to improving the exposition in Section IV.


\appendices
	\section{Proof of Theorem \ref{theo:y_t}}\label{app:1}
	By applying Bayes' rule, we have
\begin{align}
	&p^{\textbf{g}}(X_{t+1}~|~\Delta_{t+1}, \Lambda^{1:K}_{t+1})\nonumber\\
	&=\frac{\splitfrac{p^{\textbf{g}}(Y^{1:K}_{t+1}~|~X_{t+1}, \Delta_{t+1}, \Lambda^{1:K}_t, U^{1:K}_t)}{\cdot p^{\textbf{g}}(X_{t+1}, \Delta_{t+1}, \Lambda^{1:K}_t, U^{1:K}_t)}}{p^{\textbf{g}}(\Delta_{t+1}, \Lambda^{1:K}_{t+1})}\nonumber
\end{align}
\begin{align}
	&= \frac{p(Y^{1:K}_{t+1}~|~X_{t+1}) ~p^{\textbf{g}}(X_{t+1}, 	\Delta_{t+1}, \Lambda^{1:K}_t, U^{1:K}_t)}{p^{\textbf{g}}(\Delta_{t+1}, \Lambda^{1:K}_{t+1})}\nonumber\\
	&= \frac{\splitfrac{p(Y^{1:K}_{t+1}~|~X_{t+1}) ~p^{\textbf{g}}(X_{t+1} ~|~ \Delta_{t+1}, \Lambda^{1:K}_t,  U^{1:K}_t )}{\cdot p^{\textbf{g}}(\Delta_{t+1}, \Lambda^{1:K}_t,  U^{1:K}_t )}}{p^{\textbf{g}}(\Delta_{t+1}, \Lambda^{1:K}_{t+1})},	\label{eq:theo1a}
\end{align}
where in the second equality we used Lemma \ref{lem:y_t}. 

Next, 
\begin{multline} 
	p^{\textbf{g}}(\Delta_{t+1}, \Lambda^{1:K}_{t+1})= p^{\textbf{g}}(\Delta_{t+1}, \Lambda^{1:K}_t, Y^{1:K}_{t+1}, U^{1:K}_t)\nonumber\\
	= \int_{\mathscr{X}_{t+1}} p^{\textbf{g}}(X_{t+1}, \Delta_{t+1}, \Lambda^{1:K}_t, Y^{1:K}_{t+1}, U^{1:K}_t )~dX_{t+1}\nonumber\\
	=\int_{\mathscr{X}_{t+1}} p^{\textbf{g}}(Y^{1:K}_{t+1}~|~X_{t+1}, \Delta_{t+1}, \Lambda^{1:K}_t, U^{1:K}_t)\nonumber\\
	\cdot p^{\textbf{g}}(X_{t+1}, \Delta_{t+1}, \Lambda^{1:K}_t,  U^{1:K}_t )~dX_{t+1}\nonumber\\
	= \int_{\mathscr{X}_{t+1}} p^{\textbf{g}}(Y^{1:K}_{t+1}~|~X_{t+1}, \Delta_{t+1}, \Lambda^{1:K}_t, U^{1:K}_t)\nonumber\\
	\cdot p^{\textbf{g}}(X_{t+1} ~|~ \Delta_{t+1}, \Lambda^{1:K}_t,  U^{1:K}_t ) \nonumber \\
	\cdot p^{\textbf{g}}(\Delta_{t+1}, \Lambda^{1:K}_t,  U^{1:K}_t )~dX_{t+1},\label{eq:theo1b}
\end{multline}	
where by Lemma \ref{lem:y_t}, the last equation becomes
\begin{gather}
	p^{\textbf{g}}(\Delta_{t+1}, \Lambda^{1:K}_{t+1})\nonumber\\
	=\int_{\mathscr{X}_{t+1}} p(Y^{1:K}_{t+1}~|~X_{t+1})~p^{\textbf{g}}(X_{t+1} ~|~ \Delta_{t+1}, \Lambda^{1:K}_t,  U^{1:K}_t )\nonumber\\
	\cdot p^{\textbf{g}}(\Delta_{t+1}, \Lambda^{1:K}_t,  U^{1:K}_t )~dX_{t+1}.\label{eq:theo1ca}
\end{gather}
Note that $p^{\textbf{g}}(X_{t+1} ~|~ \Delta_{t+1},$ $\Lambda^{1:K}_t,  U^{1:K}_t )$ $=p^{\textbf{g}}(X_{t+1} ~|~ \Delta_{t},$ $ \Lambda^{1:K}_t,  U^{1:K}_t )$ since $Y^{1:K}_{t-n+1}$ and $U^{1:K}_{t-n+1}$ are already included in $\Lambda^{1:K}_{t},$ hence we can write \eqref{eq:theo1ca} as
\begin{gather}
	p^{\textbf{g}}(\Delta_{t+1}, \Lambda^{1:K}_{t+1})\nonumber\\
	=\int_{\mathscr{X}_{t+1}} p(Y^{1:K}_{t+1}~|~X_{t+1})~p^{\textbf{g}}(X_{t+1} ~|~ \Delta_{t}, \Lambda^{1:K}_t,  U^{1:K}_t )\nonumber\\
	\cdot p^{\textbf{g}}(\Delta_{t+1}, \Lambda^{1:K}_t,  U^{1:K}_t )~dX_{t+1}.\label{eq:theo1c}
\end{gather}	  
Substituting \eqref{eq:theo1c} into \eqref{eq:theo1a}, we have
\begin{gather}
	p^{\textbf{g}}(X_{t+1}~|~\Delta_{t+1}, \Lambda^{1:K}_{t+1})\nonumber\\
	\tiny = \frac{p(Y^{1:K}_{t+1}~|~X_{t+1}) ~p^{\textbf{g}}(X_{t+1} ~|~ \Delta_{t}, \Lambda^{1:K}_t,  U^{1:K}_t )}{\splitfrac{ \int_{\mathscr{X}_{t+1}} p(Y^{1:K}_{t+1}~|~X_{t+1})~	p^{\textbf{g}}(X_{t+1} ~|~ \Delta_{t}, \Lambda^{1:K}_t,}  {U^{1:K}_t ) ~dX_{t+1}}}, \label{eq:theo1h}
\end{gather}
which we can write as
\begin{align}
	p^{\textbf{g}}(X_{t+1}~|~\Delta_{t+1}, \Lambda^{1:K}_{t+1})\nonumber\\
	= \phi_t\big[p^{\textbf{g}}(\cdot ~|~ \Delta_{t}, \Lambda^{1:K}_t&, U^{1:K}_t), Y^{1:K}_{t+1} \big](X_{t+1}),
	\label{eq:theo1f}
\end{align}
with the function $\phi_t$ chosen appropriately. 

Next,
\begin{align}
	p^{\textbf{g}}(X_{t+1} ~|~ \Delta_{t}, \Lambda^{1:K}_t,  U^{1:K}_t )&\nonumber\\
	= \int_{\mathscr{X}_{t}} p^{\textbf{g}}(X_{t+1} ~|~ X_{t},\Delta_{t}, \Lambda^{1:K}_t,  U^{1:K}_t )&\nonumber\\
	\cdot p^{\textbf{g}}(X_{t} ~|~ \Delta_{t}, \Lambda^{1:K}_t,  &U^{1:K}_t )~dX_{t}.
	\label{eq:theo1d}
\end{align}	
By Lemma \ref{lem:x_t1ut} and Remark \ref{cor:lemU}, \eqref{eq:theo1d} becomes
\begin{gather}
	p^{\textbf{g}}(X_{t+1} ~|~ \Delta_{t}, \Lambda^{1:K}_t,  U^{1:K}_t )\nonumber\\
	= \int_{\mathscr{X}_{t}} p(X_{t+1} ~|~ X_t, U^{1:K}_t )~
	p(X_{t} ~|~ \Delta_{t}, \Lambda^{1:K}_t)~dX_{t},
	\label{eq:theo1e}
\end{gather}		
which we can write as
\begin{gather}
	p^{\textbf{g}}(X_{t+1} ~|~ \Delta_{t}, \Lambda^{1:K}_t,  U^{1:K}_t )\nonumber\\
	= \psi_t\big[ p(\cdot ~|~ \Delta_{t}, \Lambda^{1:K}_t),  U^{1:K}_t \big]\big( X_{t+1}\big),
	\label{eq:theo1i}
\end{gather}  
with the function $\psi_t$ chosen appropriately. 

Substituting \eqref{eq:theo1i} into \eqref{eq:theo1f} yields
\begin{gather}
	p^{\textbf{g}}(X_{t+1}~|~\Delta_{t+1}, \Lambda^{1:K}_{t+1})\nonumber\\
	= \phi_t\Big[\psi_t\big[ p(\cdot ~|~ \Delta_{t}, \Lambda^{1:K}_t),  U^{1:K}_t \big],
	Y^{1:K}_{t+1} \Big](X_{t+1}). \label{eq:theo1j}
\end{gather}  	
Therefore  $p^{\textbf{g}}(X_{t+1}~|~\Delta_{t+1}, \Lambda^{1:K}_{t+1})$ does not depend on the control strategy $\textbf{g}$, so we can drop the superscript. Moreover, we can choose appropriate function $\theta_t$ such that
\begin{gather}
	p(X_{t+1}~|~\Delta_{t+1}, \Lambda^{1:K}_{t+1})= \Pi_{t+1}(\Delta_{t+1}, \Lambda^{1:K}_{t+1})(X_{t+1}) \nonumber\\
	= \theta_t\big[ \Pi_{t}(\Delta_{t}, \Lambda^{1:K}_{t})(X_{t}), 	Y^{1:K}_{t+1}, U^{1:K}_t \big]. \label{eq:theo1k}
\end{gather}  	


\section{Proof of Lemma \ref{lemma:homo}}\label{app:2}

Obviously, for $t=T$,
\begin{align}			
	&V_T\big(\rho~ \Pi_{T}(\Delta_{T}, \Lambda^{1:K}_{T})\big)\nonumber\\
	&=  \int_{\mathscr{X}_T} c_T(X_T) ~\rho ~\Pi_{T}(\Delta_{T}, \Lambda^{1:K}_{T})(X_T)~ dX_T\nonumber\\
	& =\rho ~V_T\big(\Pi_{T}(\Delta_{T}, \Lambda^{1:K}_{T})\big).
	\label{theo_concave:1c}	
\end{align}	
For $t=0,\ldots,T-1$, by assigning $\Pi_{t}=\rho~ \Pi_{t}$ [recall $p(X_{t} ~|~ \Delta_{t}, \Lambda^{1:K}_t)=\Pi_{t}(\Delta_{t}, \Lambda^{1:K}_{t})$], \eqref{homo:1b} becomes
\begin{align}			
	&V_t\big(\rho ~\Pi_{t}(\Delta_{t}, \Lambda^{1:K}_{t})\big)\nonumber\\
	&= \inf_{u^{1:K}_t\in\prod_{k\in\mathcal{K}} \mathcal{U}_t^k }\bigg[ \int_{\mathscr{X}_t} c_t(X_t,U^{1:K}_t) ~\rho~ \Pi_{t}(\Delta_{t}, \Lambda^{1:K}_{t})(X_t)~ dX_t\nonumber\\
	&+ \int_{\mathscr{Y}_{t+1}} \int_{\mathscr{X}_{t+1}} \int_{\mathscr{X}_{t}} V_{t+1}\big(\rho~ \Pi_{t+1}(\Delta_{t+1}, \Lambda^{1:K}_{t+1})\big) \nonumber\\
	& \cdot p(Y^{1:K}_{t+1}~|~X_{t+1})~ p(X_{t+1} ~|~ X_t, U^{1:K}_t )~\rho ~p(X_{t} ~|~ \Delta_{t}, \Lambda^{1:K}_t)\nonumber\\
	&dX_{t}~dX_{t+1}~dY^{1:K}_{t+1}\bigg],
	\label{theo_concave:1d}	
\end{align}		
where $\mathscr{Y}_{t+1}=\otimes_{k\in\mathcal{K}}\mathscr{Y}^k$.

Next, from \eqref{eq:xt1}, 
\begin{align}
	&\rho~\Pi_{t+1}(\Delta_{t+1}, \Lambda^{1:K}_{t+1}) \nonumber\\
	&= \frac{\splitfrac{ p(Y^{1:K}_{t+1}~|~X_{t+1}) ~\int_{\mathscr{X}_{t}} p(X_{t+1} ~|~ X_t, U^{1:K}_t )~\bcancel{\rho}~p(X_{t} ~|~ \Delta_{t},} {\Lambda^{1:K}_t)~dX_{t}}}{\splitfrac{ \int_{\mathscr{X}_{t+1}} p(Y^{1:K}_{t+1}~|~X_{t+1})~	\int_{\mathscr{X}_{t}} p(X_{t+1} ~|~ X_t, U^{1:K}_t )~
		}  { \cdot \bcancel{\rho}~p(X_{t} ~|~ \Delta_{t},\Lambda^{1:K}_t)~dX_{t}~dX_{t+1}}}, \nonumber\\
	&= \Pi_{t+1}(\Delta_{t+1}, \Lambda^{1:K}_{t+1}).\label{theo_concave:1e}
\end{align} 
Substituting \eqref{theo_concave:1e} into \eqref{theo_concave:1d}, we have
\begin{align}			
	&V_t\big(\rho~ \Pi_{t}(\Delta_{t}, \Lambda^{1:K}_{t})\big)\nonumber\\
	&= \inf_{u^{1:K}_t\in\prod_{k\in\mathcal{K}} \mathcal{U}_t^k }\bigg[ \int_{\mathscr{X}_t} c_t(X_t,U^{1:K}_t) ~\rho~ \Pi_{t}(\Delta_{t}, \Lambda^{1:K}_{t})(X_t)~ dX_t\nonumber\\
	&+ \int_{\mathscr{Y}_{t+1}} \int_{\mathscr{X}_{t+1}} \int_{\mathscr{X}_{t}} V_{t+1}\big( \Pi_{t+1}(\Delta_{t+1}, \Lambda^{1:K}_{t+1})\big) \nonumber\\
	& \cdot p(Y^{1:K}_{t+1}~|~X_{t+1})~ p(X_{t+1} ~|~ X_t, U^{1:K}_t )~\rho ~p(X_{t} ~|~ \Delta_{t}, \Lambda^{1:K}_t)\nonumber\\
	&dX_{t}~dX_{t+1}~dY^{1:K}_{t+1}\bigg]\nonumber\\
	&=\rho~V_t\big(\Pi_{t}(\Delta_{t}, \Lambda^{1:K}_{t})\big).
	\label{theo_concave:1f}	
\end{align}		


\section{Proof of Theorem \ref{theo:concave}}\label{app:3}

	We  start with \eqref{theo_homo:1a} 
\begin{align}			
	&V_t\big(\Pi_{t}(\Delta_{t}, \Lambda^{1:K}_{t})\big)\nonumber\\
	&= \inf_{u^{1:K}_t\in\prod_{k\in\mathcal{K}} \mathcal{U}_t^k }\bigg[ \int_{\mathscr{X}_t} c_t(X_t,U^{1:K}_t) ~\Pi_{t}(\Delta_{t}, \Lambda^{1:K}_{t})(X_t)~ dX_t\nonumber\\
	&+ \int_{\mathscr{Y}_{t+1}} \int_{\mathscr{X}_{t+1}} \int_{\mathscr{X}_{t}} V_{t+1}\big( \Pi_{t+1}(\Delta_{t+1}, \Lambda^{1:K}_{t+1})\big) \nonumber\\
	& \cdot p(Y^{1:K}_{t+1}~|~X_{t+1})~ p(X_{t+1} ~|~ X_t, U^{1:K}_t )~p(X_{t} ~|~ \Delta_{t}, \Lambda^{1:K}_t)\nonumber\\
	&dX_{t}~dX_{t+1}~dY^{1:K}_{t+1}\bigg],
	\label{theo_homo:1b}
\end{align}		
where $\mathscr{Y}_{t+1}=\otimes_{k\in\mathcal{K}}\mathscr{Y}^k$.

Choosing 
\begin{align}	
	&\rho =  \int_{\mathscr{X}_{t+1}} \int_{\mathscr{X}_{t}} p(Y^{1:K}_{t+1}~|~X_{t+1})~\nonumber\\
	&\cdot p(X_{t+1} ~|~ X_t, U^{1:K}_t )~p(X_{t} ~|~ \Delta_{t},\Lambda^{1:K}_t)~dX_{t}~dX_{t+1},\label{theo_concave:1rho}	
\end{align}
we can use the positive homogeneity of $V_t\big(\Pi_{t}(\Delta_{t}, \Lambda^{1:K}_{t})\big)$ (Lemma \ref{lemma:homo}) to write the second part of \eqref{theo_homo:1b} as follows
\begin{align}			
	&\int_{\mathscr{Y}_{t+1}} \int_{\mathscr{X}_{t+1}} \int_{\mathscr{X}_{t}} V_{t+1}\big( \Pi_{t+1}(\Delta_{t+1}, \Lambda^{1:K}_{t+1})\big) \nonumber\\
	& \cdot p(Y^{1:K}_{t+1}~|~X_{t+1})~ p(X_{t+1} ~|~ X_t, U^{1:K}_t )~p(X_{t} ~|~ \Delta_{t}, \Lambda^{1:K}_t)\nonumber\\
	&dX_{t}~dX_{t+1}~dY^{1:K}_{t+1}\nonumber\\
	&=\int_{\mathscr{Y}_{t+1}} V_{t+1}\big(\rho~ \Pi_{t+1}(\Delta_{t+1}, \Lambda^{1:K}_{t+1})\big)~dY^{1:K}_{t+1}\nonumber\\
	&=\int_{\mathscr{Y}_{t+1}} V_{t+1}\bigg(
	p(Y^{1:K}_{t+1}~|~X_{t+1}) ~\int_{\mathscr{X}_{t}} p(X_{t+1} ~|~ X_t, U^{1:K}_t )\nonumber\\
	&p(X_{t} ~|~ \Delta_{t},\Lambda^{1:K}_t)~dX_{t}
	\bigg)~dY^{1:K}_{t+1},
	\label{theo_concave:1h}	
\end{align}
where, in the last equality, we substituted \eqref{theo_concave:1rho} and \eqref{eq:xt1}.

Thus, we can write \eqref{theo_homo:1b} as
\begin{align}			
	&V_t\big(\Pi_{t}(\Delta_{t}, \Lambda^{1:K}_{t})\big)\nonumber\\
	&= \inf_{u^{1:K}_t\in\prod_{k\in\mathcal{K}} \mathcal{U}_t^k }\bigg[ \int_{\mathscr{X}_t} c_t(X_t,U^{1:K}_t) ~\Pi_{t}(\Delta_{t}, \Lambda^{1:K}_{t})(X_t)~ dX_t\nonumber\\
	&+\int_{\mathscr{Y}_{t+1}} V_{t+1}\bigg(
	p(Y^{1:K}_{t+1}~|~X_{t+1}) ~\int_{\mathscr{X}_{t}} p(X_{t+1} ~|~ X_t, U^{1:K}_t )\nonumber\\
	&p(X_{t} ~|~ \Delta_{t},\Lambda^{1:K}_t)~dX_{t}
	\bigg)~dY^{1:K}_{t+1}\bigg].
	\label{theo_concave:1hh}	
\end{align}		

The remainder of the proof follows by induction. 
Suppose that $V_{t+1}\big( \Pi_{t+1}(\Delta_{t+1}, \Lambda^{1:K}_{t+1})\big)$ is concave. Since
\begin{align}
	&V_{t+1}\bigg(p(Y^{1:K}_{t+1}~|~X_{t+1}) ~\int_{\mathscr{X}_{t}} p(X_{t+1} ~|~ X_t, U^{1:K}_t )\nonumber\\
	&p(X_{t} ~|~ \Delta_{t},\Lambda^{1:K}_t)~dX_{t}\bigg),\label{theo_concave:1i}	
\end{align}
is the composition of a concave function and increasing linear function, it follows that it is concave. However, concavity is preserved by integration (see \cite{boyd2004}, p. 79), hence 
\begin{align}			
	&\int_{\mathscr{Y}_{t+1}} V_{t+1}\bigg(
	p(Y^{1:K}_{t+1}~|~X_{t+1}) ~\int_{\mathscr{X}_{t}} p(X_{t+1} ~|~ X_t, U^{1:K}_t )\nonumber\\
	&p(X_{t} ~|~ \Delta_{t},\Lambda^{1:K}_t)~dX_{t}
	\bigg)~dY^{1:K}_{t+1}
	\label{theo_concave:1j}	
\end{align}
is concave.
Since the pointwise infimum of concave functions is concave, \eqref{theo_concave:1hh} is concave.


\section{Proof of Theorem \ref{theo:y_tk}}\label{app:4}
By applying Bayes' rule, we have
\begin{align}
	&p^{\textbf{g}}(X_{t+1}~|~\Delta_{t+1}, \Lambda^k_{t+1}) \nonumber\\
	&=\frac{\splitfrac{p^{\textbf{g}}(Y^k_{t+1}~|~X_{t+1}, \Delta_{t+1}, \Lambda^k_t, U^k_t)}{\cdot p^{\textbf{g}}(X_{t+1}, \Delta_{t+1}, \Lambda^k_t, U^k_t)}}{p^{\textbf{g}}(\Delta_{t+1}, \Lambda^k_{t+1})}\nonumber\\
	&= \frac{p(Y^k_{t+1}~|~X_{t+1}) ~p^{\textbf{g}}(X_{t+1}, 	\Delta_{t+1}, \Lambda^k_t, U^k_t)}{p^{\textbf{g}}(\Delta_{t+1}, \Lambda^k_{t+1})}\nonumber\\
	&= \frac{\splitfrac{p(Y^k_{t+1}~|~X_{t+1}) ~p^{\textbf{g}}(X_{t+1} ~|~ \Delta_{t+1}, \Lambda^k_t,  U^k_t )}{\cdot p^{\textbf{g}}(\Delta_{t+1}, \Lambda^k_t,  U^k_t )}}{p^{\textbf{g}}(\Delta_{t+1}, \Lambda^k_{t+1})}, \label{eq:theo_y_tk_a}
\end{align}
where in the second equality we used Lemma \ref{lem:y_tk}.

Next, 
\begin{align}
	&p^{\textbf{g}}(\Delta_{t+1}, \Lambda^k_{t+1})= p^{\textbf{g}}(\Delta_{t+1}, \Lambda^k_t, Y^k_{t+1}, U^k_t)\nonumber\\
	&= \int_{\mathscr{X}_{t+1}} p^{\textbf{g}}(X_{t+1}, \Delta_{t+1}, \Lambda^k_t, Y^k_{t+1}, U^k_t )~dX_{t+1}\nonumber\\
	&=\int_{\mathscr{X}_{t+1}} p^{\textbf{g}}(Y^k_{t+1}~|~X_{t+1}, \Delta_{t+1}, \Lambda^k_t, U^k_t)\nonumber\\
	&\cdot p^{\textbf{g}}(X_{t+1}, \Delta_{t+1}, \Lambda^k_t,  U^k_t )~dX_{t+1}\nonumber\\
	&= \int_{\mathscr{X}_{t+1}} p^{\textbf{g}}(Y^k_{t+1}~|~X_{t+1}, \Delta_{t+1}, \Lambda^k_t, U^k_t)\nonumber\\
	&\cdot p^{\textbf{g}}(X_{t+1} ~|~ \Delta_{t+1}, \Lambda^k_t,  U^k_t )~p^{\textbf{g}}(\Delta_{t+1}, \Lambda^k_t,  U^k_t )~dX_{t+1},\label{eq:theo_y_tk_b}
\end{align}	
where by Lemma \ref{lem:y_tk}, the last equation becomes
\begin{gather}
	p^{\textbf{g}}(\Delta_{t+1}, \Lambda^k_{t+1})\nonumber\\
	=\int_{\mathscr{X}_{t+1}} p(Y^k_{t+1}~|~X_{t+1})~p^{\textbf{g}}(X_{t+1} ~|~ \Delta_{t+1}, \Lambda^k_t,  U^k_t )\nonumber\\
	\cdot p^{\textbf{g}}(\Delta_{t+1}, \Lambda^k_t,  U^k_t )~dX_{t+1}.\label{eq:theo_y_tk_c}
\end{gather}	  
Substituting \eqref{eq:theo_y_tk_c} into \eqref{eq:theo_y_tk_a}, we have
\begin{gather}
	p^{\textbf{g}}(X_{t+1}~|~\Delta_{t+1}, \Lambda^k_{t+1})\nonumber\\
	= \frac{p(Y^k_{t+1}~|~X_{t+1}) ~p^{\textbf{g}}(X_{t+1} ~|~ \Delta_{t+1}, \Lambda^k_t,  U^k_t )}{\int_{\mathscr{X}_{t+1}} p(Y^k_{t+1}~|~X_{t+1})~ p^{\textbf{g}}(X_{t+1} ~|~ \Delta_{t+1}, \Lambda^k_t,  U^k_t )~dX_{t+1}}, \label{eq:theo_y_tk_h}
\end{gather}
which we can write as
\begin{align}
	&p^{\textbf{g}}(X_{t+1} ~|~ \Delta_{t+1}, \Lambda^k_t,  U^k_t )\nonumber\\
	&= \phi^k_t\big[p^{\textbf{g}}(X_{t+1} ~|~ \Delta_{t+1}, \Lambda^k_t,  U^k_t ),Y^k_{t+1} \big](X_{t+1}),
	\label{eq:theo_y_tk_f}
\end{align}
with the function $\phi^k_t$ chosen appropriately.

By Lemma \ref{lem:x_t1k}, $p^{\textbf{g}}(X_{t+1} ~|~ \Delta_{t+1}, \Lambda^k_t,  U^k_t )$ depends only on the control strategy $\textbf{g}^{-k}$, so we can write $p^{\textbf{g}^{-k}}(X_{t+1} ~|~ \Delta_{t+1}, \Lambda^k_t,  U^k_t )$.
Next, 
\begin{align}
	p^{\textbf{g}^{-k}}(X_{t+1} ~|~ \Delta_{t+1}, \Lambda^k_t,  U^k_t )&\nonumber\\
	= \int_{\mathscr{X}_{t}} p^{\textbf{g}^{-k}}(X_{t+1} ~|~ X_{t},\Delta_{t+1}, \Lambda^k_t,  U^k_t )&\nonumber\\
	\cdot p^{\textbf{g}^{-k}}(X_{t} ~|~ \Delta_{t+1}&, \Lambda^k_t,  U^k_t )~dX_{t},
	\label{eq:theo_y_tk_d}
\end{align}	
where the last term in \eqref{eq:theo_y_tk_d} can be written as
\begin{align}
	&p^{\textbf{g}^{-k}}(X_{t} ~|~ \Delta_{t+1}, \Lambda^k_t,  U^k_t )= p^{\textbf{g}^{-k}}(X_{t} ~|~ \Delta_{t}, Y^{1:K}_{t-n+1}, U^{1:K}_{t-n+1},\nonumber\\
	&\Lambda^k_t) = p^{\textbf{g}^{-k}}(X_{t} ~|~ \Delta_{t}, Y^{-k}_{t-n+1}, U^{-k}_{t-n+1},\Lambda^k_t)\nonumber\\	
	&= p^{\textbf{g}^{-k}}(X_{t} ~|~ \Delta_{t}, \Lambda^k_t)\cdot p^{\textbf{g}^{-k}}(Y^{-k}_{t-n+1}, U^{-k}_{t-n+1}~|~ \Delta_{t}, \Lambda^k_t),
	\label{eq:theo_y_tk_e}
\end{align}
where $Y^{-k}_{t-n+1}=(Y_{t-n+1}^1,\ldots, Y_{t-n+1}^{k-1}, Y_{t-n+1}^{k+1}, \ldots,Y_{t-n+1}^K)$, and $U^{-k}_{t-n+1}=(U_{t-n+1}^1,\ldots, U_{t-n+1}^{k-1}, U_{t-n+1}^{k+1}, \ldots, U_{t-n+1}^K)$. In the second equality, we dropped $Y^{k}_{t-n+1}, U^{k}_{t-n+1}$ from conditioning since they  both are included in $\Lambda^k_t$.
Substituting \eqref{eq:theo_y_tk_e} into \eqref{eq:theo_y_tk_d} yields 

\begin{align}
	&p^{\textbf{g}^{-k}}(X_{t+1} ~|~ \Delta_{t+1}, \Lambda^k_t,  U^k_t )= \int_{\mathscr{X}_{t}} p^{\textbf{g}^{-k}}(X_{t+1} ~|~ X_{t},\Delta_{t+1}, \nonumber\\
	&\Lambda^k_t,  U^k_t ) \cdot p^{\textbf{g}^{-k}}(X_{t} ~|~ \Delta_{t}, \Lambda^k_t)\cdot p^{\textbf{g}^{-k}}(Y^{-k}_{t-n+1}, U^{-k}_{t-n+1}~|~ \Delta_{t}, \Lambda^k_t)\nonumber\\
	&\cdot dX_{t} = \int_{\mathscr{X}_{t}} p^{\textbf{g}^{-k}}(X_{t+1} ~|~ X_{t}, \Delta_{t}, Y^{1:K}_{t-n+1}, U^{1:K}_{t-n+1}, \Lambda^k_t,  U^k_t )\nonumber\\
	&\cdot p^{\textbf{g}^{-k}}(X_{t} ~|~ \Delta_{t}, \Lambda^k_t)\cdot p^{\textbf{g}^{-k}}(Y^{-k}_{t-n+1}, U^{-k}_{t-n+1}~|~ \Delta_{t}, \Lambda^k_t)~dX_{t}.	
	\label{eq:theo_y_tk_i}
\end{align}	

Substituting \eqref{eq:theo_y_tk_i} into \eqref{eq:theo_y_tk_f} yields
\begin{align}
	&p^{\textbf{g}}(X_{t+1}~|~\Delta_{t+1}, \Lambda^k_{t+1})\nonumber\\
	&= \phi^k_t\Big[\psi^k_t\big[ p^{\textbf{g}^{-k}}(\cdot ~|~ \Delta_{t}, \Lambda^k_t),   Y^{1:K}_{t-n+1}, U^{1:K}_{t-n+1},  \Delta_{t}, \Lambda^k_t, U^k_t \big],\nonumber\\
	&Y^k_{t+1} \Big](X_{t+1}).
	\label{eq:theo_y_tk_g}
\end{align}		

Therefore, $p^{\textbf{g}}(X_{t+1}~|~\Delta_{t+1}, \Lambda^k_{t+1})$,
does not depend on the control strategy $\textbf{g}^k$ of the team member $k$, so we can adjust the subscript accordingly. 
Moreover, we can choose appropriate function $\theta^k_t$ such that
\begin{align}
	&p^{\textbf{g}^{-k}}(X_{t+1}~|~\Delta_{t+1}, \Lambda^k_{t+1})=\Pi^k_{t+1}(\Delta_{t+1}, \Lambda^k_{t+1})(X_{t+1}) \nonumber\\
	&= \theta^k_t\big[ \Pi^k_{t}(\Delta_{t}, \Lambda^k_{t})(X_{t}), Y^{1:K}_{t-n+1}, U^{1:K}_{t-n+1},  \Delta_{t}, \Lambda^k_t, Y^k_{t+1}, U^k_t  \big]\nonumber\\
	&=\theta^k_t\big[ \Pi^k_{t}(\Delta_{t}, \Lambda^k_{t})(X_{t}),  \Delta_{t+1}, \Lambda^{k}_{t+1}  \big]. \label{eq:theo_y_tk_j}
\end{align}  	


\section{Proof of Theorem \ref{theo:dp_team}}\label{app:5}	
Let $\textbf{g}^*=\{\textbf{g}^{*1},\ldots, \textbf{g}^{*k-1}, \textbf{g}^{*k},$ $\textbf{g}^{*k+1},\ldots, \textbf{g}^{*K}\}$
be the optimal separated control strategy of the manager which achieves the infimum in \eqref{theo:dp_team:1aa}-\eqref{theo:dp_team:1a}. Starting with \eqref{theo:dp_team:1a}, we have

\begin{align}	
	&V_t\big(\Pi_{t}(\Delta_{t}, \Lambda^{1:K}_{t})\big) = \inf_{u^{1:K}_t\in\prod_{k\in\mathcal{K}} \mathcal{U}_t^k }\mathbb{E}^{\textbf{g}^*}\Big[c_t(X_t,U^{1:K}_t)\nonumber\\ 
	&+ V_{t+1}\big(\Pi_{t+1}(\Delta_{t+1}, \Lambda^{1:K}_{t+1}) \big)~|~\Pi_{t}=\pi_t , U^{1:K}_t=u^{1:K}_t \Big]\nonumber\\
	&=\inf_{u^{k}_t\in\mathcal{U}_t^k }	\inf_{u^{-k}_t\in\prod_{i\in\mathcal{K}\setminus\{k\}} \mathcal{U}_t^{i} }
	\mathbb{E}^{\textbf{g}^*}\Big[c_t(X_t,U^{k}_t,U^{-k}_t)\nonumber\\ 
	&+ V_{t+1}\big(\Pi_{t+1}(\Delta_{t+1}, \Lambda^{1:K}_{t+1}) \big)~|~\Pi_{t}=\pi_t , U^{1:K}_t=u^{1:K}_t \Big]\nonumber\\ 
	&=\inf_{u^{k}_t\in\mathcal{U}_t^k }	\inf_{u^{-k}_t\in\prod_{i\in\mathcal{K}\setminus\{k\}}  \mathcal{U}_t^{i} } \bigg[\int_{\mathscr{X}_{t}} c_t(X_t,U^{k}_t,U^{-k}_t)~\Pi_{t}(\Delta_{t}, ~\nonumber\\
	& \Lambda^{1:K}_{t})(X_t) ~dX_t\nonumber\\
	&+\mathbb{E}^{\textbf{g}^*}\Big[V_{t+1}\big(\Pi_{t+1}(\Delta_{t+1}, \Lambda^{1:K}_{t+1})\big)~|~\Pi_{t}=\pi_t ,	U^{1:K}_t=u^{1:K}_t\Big] \bigg].
	\label{theo:dp_team:1e}	
\end{align}

The function $V_t\big(\Pi_{t}(\Delta_{t}, \Lambda^{1:K}_{t})\big)$ is concave with respect to $\Pi_{t}(\Delta_{t}, \Lambda^{1:K}_{t})$ (Theorem \ref{theo:concave}) for all $t=0,1,\ldots, T$. Since  $\Pi_{t}(\Delta_{t}, \Lambda^{1:K}_{t})$ is increasing with $\Pi^k_{t}(\Delta_{t}, \Lambda^{k}_{t})$ for all $k$ (Corollary \ref{cor:pi}),  it follows that $V_t\big(\Pi_{t}(\Delta_{t}, \Lambda^{1:K}_{t})\big)$ is also increasing with respect to $\Pi^k_{t}(\Delta_{t}, \Lambda^{k}_{t})$ for all $k$. Thus, the manager can solve \eqref{theo:dp_team:1e}	for each member $k\in\mathcal{K}$ separately by fixing $u^{-k}_{t}=(u_{t}^1,\ldots, u_{t}^{k-1}, u_{t}^{k+1}, \ldots, u_{t}^K)$.
By substituting  \eqref{cor:eq:3} into \eqref{theo:dp_team:1e} and for any arbitrary $u^{-k}_{t}$, we have
\begin{align}	
	&V_t\big(\Pi_{t}(\Delta_{t}, \Lambda^{1:K}_{t})\big)=\inf_{u^{k}_t\in\mathcal{U}_t^k }	\bigg[\int_{\mathscr{X}_{t}} c_t(X_t,U^{k}_t,U^{-k}_t)\nonumber\\  &\cdot\vartheta^k_t\big(\Pi_{t}^k(\Delta_{t}, \Lambda^{k}_{t})(X_{t}), \Delta_{t}, \Lambda^{1:K}_{t}\big)~dX_t\nonumber\\		
	&+\mathbb{E}^{\textbf{g}^{*}}\Big[V_{t+1}\Big(\vartheta^k_{t+1}\big(\Pi_{t+1}^k(\Delta_{t+1}, \Lambda^{k}_{t+1})(X_{t+1}), \Delta_{t+1}, \Lambda^{1:K}_{t+1}\big)\Big)~\Big|\nonumber\\
	&~\Pi_{t}^k=\pi_t^k,
	\Delta_{t}=\delta_{t}, \Lambda^{1:K}_{t}=\lambda^{1:K}_{t}, U_t^k=u^{k}_t \Big]\bigg],
	\label{theo:dp_team:1fa}	
\end{align}		
where $\pi_t^k$, $\delta_{t}$, $\lambda^{1:K}_{t}$, and $U_t^k=u^{k}_t$ are the realizations of $\Pi_{t}^k$, $\Delta_{t}$, $\Lambda^{1:K}_{t}$, and $U_t^k$, respectively. Given that the function $\vartheta^k_t$ is increasing with  $\Pi^k_{t}(\Delta_{t}, \Lambda^{k}_{t})$ for all $k$ and $t$ (Corollary \ref{cor:pi}), from \eqref{theo:dp_team:1fa}, it follows that at $t=T-1$ and for any $u^{-k}_{T-1}$
\begin{align}	
	&\arginf_{u^{k}_{T-1}\in\mathcal{U}_{T-1}^k } \bigg[ \int_{\mathscr{X}_{{T-1}}} c_{T-1}(X_{T-1},U^{k}_{T-1},U^{-k}_{T-1})\nonumber\\
	&\cdot\vartheta^k_{T-1}\big(\Pi_{T-1}^k(\Delta_{T-1}, \Lambda^{k}_{T-1})(X_{T-1}),\Delta_{T-1},\Lambda^{1:K}_{T-1}\big)~dX_{T-1} \nonumber\\  
	&+\int_{\mathscr{X}_{{T}}} c_{T}(X_{T})\cdot\vartheta^k_{T}\big(\Pi_{T}^k(\Delta_{T}, \Lambda^{k}_{T})(X_{T}),\Delta_{T},\Lambda^{1:K}_{T}\big)~dX_{T} \bigg] \nonumber\\
	&=\arginf_{u^{k}_{T-1}\in\mathcal{U}_{T-1}^k } \bigg[ \int_{\mathscr{X}_{{T-1}}} c_{T-1}(X_{T-1},U^{k}_{T-1},U^{-k}_{T-1})\nonumber\\
	&\cdot\Pi_{T-1}^k(\Delta_{T-1}, \Lambda^{k}_{T-1})(X_{T-1})~dX_{T-1} \nonumber\\  
	&+\int_{\mathscr{X}_{{T}}} c_{T}(X_{T})\cdot\Pi_{T}^k(\Delta_{T}, \Lambda^{k}_{T})(X_{T})~dX_{T} \bigg], 
	\label{theo:dp_team:1g}	
\end{align}
or, alternatively, from \eqref{theo:dp_team:1aa}-\eqref{theo:dp_team:1a} and \eqref{theo:dp_team:1bb}-\eqref{theo:dp_team:1b},   \eqref{theo:dp_team:1g} can be written as
\begin{align}
	&\arginf_{u^{k}_{T-1}\in\mathcal{U}_{T-1}^k } \bigg[V_{T-1}\big(\Pi_{T-1}(\Delta_{T-1}, \Lambda^{1:K}_{T-1})\big)\bigg]	\nonumber\\
	&=\arginf_{u^{k}_{T-1}\in\mathcal{U}_{T-1}^k }\bigg[V_{T-1}^k\big(\Pi^k_{T-1}(\Delta_{T-1}, \Lambda^{k}_{T-1})\big)\bigg]
	\label{theo:dp_team:1ga}	
\end{align}		
Continuing backward in time, it follows that for all $t=0,1,\ldots, T$,  and for any $u^{-k}_{t}$, 
\begin{align}	
	&\arginf_{u^{k}_t\in\mathcal{U}_t^k }\bigg[ 	\int_{\mathscr{X}_{t}} c_t(X_t,U^{k}_t,U^{-k}_t)~\vartheta^k_t\big(\Pi_{t}^k(\Delta_{t}, \Lambda^{k}_{t})(X_{t}),\Delta_{t},\Lambda^{1:K}_{t}\big)~\nonumber\\  
	 	&\cdot dX_t+\mathbb{E}^{\textbf{g}^{*}}\Big[V_{t+1}\Big(\vartheta^k_{t+1}\big(\Pi_{t+1}^k(\Delta_{t+1}, \Lambda^{k}_{t+1})(X_{t+1}), \Delta_{t+1},\nonumber\\ &\Lambda^{1:K}_{t+1}\big)\Big)~\Big|~\Pi_{t}^k=\pi_t^k,
	\Delta_{t}=\delta_{t}, \Lambda^{1:K}_{t}=\lambda^{1:K}_{t}, U_t^k=u^{k}_t \Big]
	\bigg] \nonumber\\  
	&=\arginf_{u^{k}_t\in \mathcal{U}_t^k}\Bigg[\int_{\mathscr{X}_{t}} c_t(X_t,U^{k}_t,U^{-k}_t)~\Pi_{t}^k(\Delta_{t},\Lambda^{k}_{t})(X_t)~dX_t\nonumber\\
	&+	\mathbb{E}^{\textbf{g}^k}\bigg[V_{t+1}^k\big(\Pi^k_{t+1}(\Delta_{t+1}, \Lambda^{k}_{t+1})\big)~|~\Pi_{t}^k=\pi_t^k,\Delta_{t}=\delta_{t},  \nonumber\\
	&\Lambda^{k}_{t}=\lambda^{k}_{t}, U^{k}_t=u^{k}_t) \bigg]
	\Bigg].	
	\label{theo:dp_team:1gc}	
\end{align}

Therefore,  the solution of the manager in  \eqref{theo:dp_team:1aa}-\eqref{theo:dp_team:1a}  is the same as the solution derived by each member $k$  in  \eqref{theo:dp_team:1bb}-\eqref{theo:dp_team:1b} for all $t=0,1,\ldots, T-1$.

\balance

\bibliographystyle{IEEEtran}
\bibliography{TAC_Ref_structure, TAC_Ref_IDS, TAC_Ref_Andreas}

\begin{IEEEbiography}[{\includegraphics[width=1.1in,height=1.25in,clip,keepaspectratio]{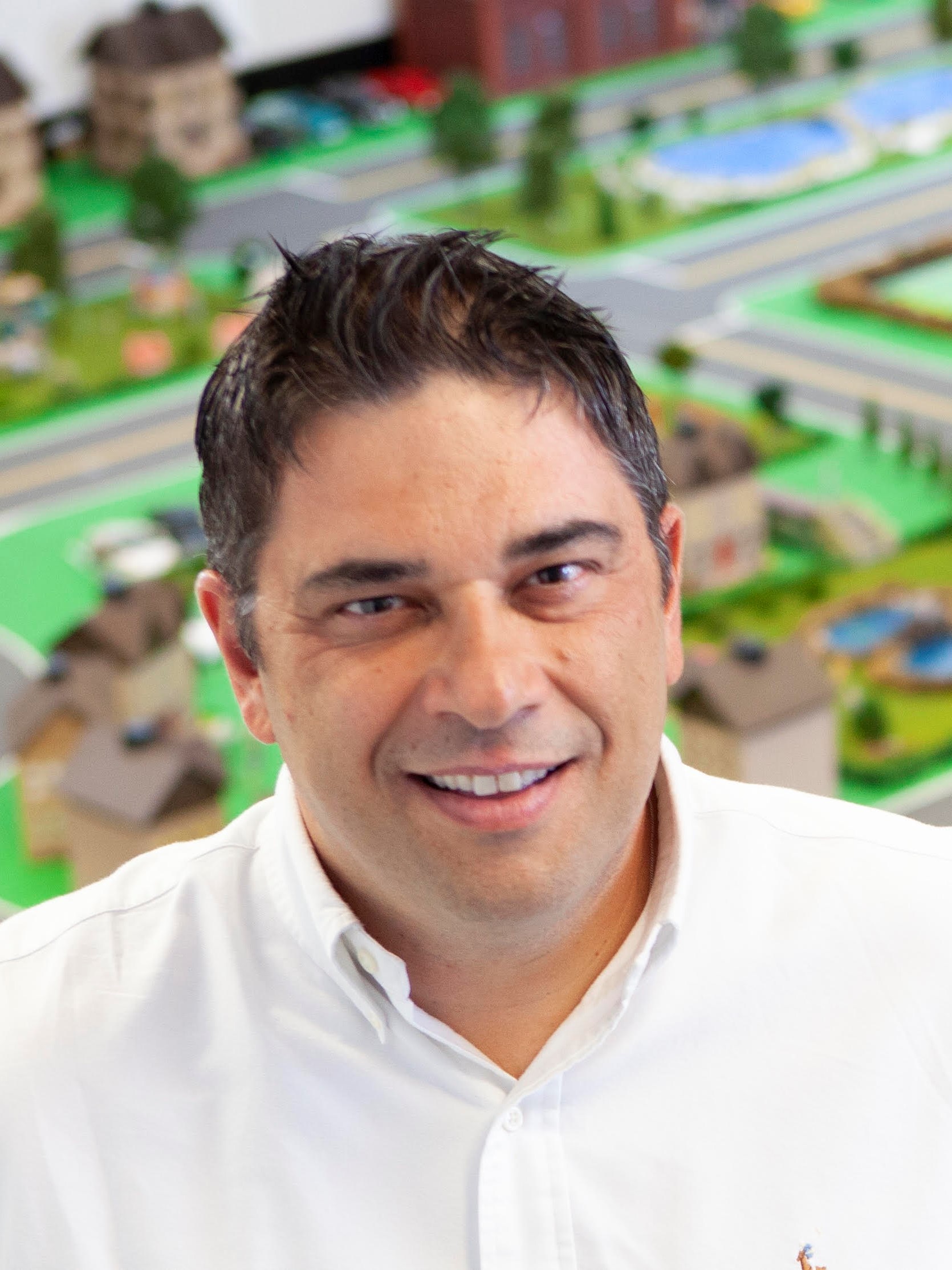}}]{Andreas A. Malikopoulos}
	(S'06--M'09--SM'17) received the Diploma in mechanical engineering from the National Technical University of Athens, Greece, in 2000. He received M.S. and Ph.D. degrees from the department of mechanical engineering at the University of Michigan, Ann Arbor, Michigan, USA, in 2004 and 2008, respectively. 
	He is the Terri Connor Kelly and John Kelly Career Development Associate Professor in the Department of Mechanical Engineering at the University of Delaware, the Director of the Information and Decision Science (IDS) Laboratory, and the Director of the Sociotechnical Systems Center. Prior to these appointments, he was the Deputy Director and the Lead of the Sustainable Mobility Theme of the Urban Dynamics Institute at Oak Ridge National Laboratory, and a Senior Researcher with General Motors Global Research \& Development. His research spans several fields, including analysis, optimization, and control of cyber-physical systems; decentralized systems; stochastic scheduling and resource allocation problems; and learning in complex systems. The emphasis is on applications related to smart cities, emerging mobility systems, and sociotechnical systems. He has been an Associate Editor of the IEEE Transactions on Intelligent Vehicles and IEEE Transactions on Intelligent Transportation Systems from 2017 through 2020. He is currently an Associate Editor of Automatica and IEEE Transactions on Automatic Control. He is a member of SIAM, AAAS, and a Fellow of the ASME.
\end{IEEEbiography}

\end{document}